\newtheorem{thm}{Theorem}[section]
\newcommand{\bt}{\begin{thm}}
\newcommand{\et}{\end{thm}}
\newtheorem{ex}[thm]{Example}
\newtheorem{cor}[thm]{Corollary}   %remember switch all {coro} to {cor}
\newcommand{\bc}{\begin{cor}}
\newcommand{\ec}{\end{cor}}
\newtheorem{lem}[thm]{Lemma}   %remember to switch all {lem} to {lem}
\newcommand{\bl}{\begin{lem}}
\newcommand{\el}{\end{lem}}
\newtheorem{prop}[thm]{Proposition}
\newcommand{\bp}{\begin{prop}}
\newcommand{\ep}{\end{prop}}
\newtheorem{defn}[thm]{Definition}
\newcommand{\bd}{\begin{defn}}    % This produces an error????    
\newcommand{\ed}{\end{defn}}
\newtheorem{rmrk}[thm]{Remark}   %remember to switch all {rmrk} to {rmrk}
\newcommand{\br}{\begin{rmrk}}
\newcommand{\er}{\end{rmrk}}
\newtheorem{quest}[thm]{Question}
\newtheorem{example}[thm]{Example}
\newcommand{\GHto}{\stackrel { \textrm{GH}}{\longrightarrow} }
\newcommand{\Fto}{\stackrel {\mathcal{F}}{\longrightarrow} }
\newcommand{\be}{\begin{equation}}
 \newcommand{\ee}{\end{equation}}
\newcommand{\E}{\mathbb{E}}
\newcommand{\diam}{\operatorname{Diam}}
\newcommand{\set}{{\rm{set}}}
\newcommand{\mass}{{\mathbf M}}
\newcommand{\intcurr}{{\mathbf I}}      %metric integral current%
\newcommand{\area}{\operatorname{Area}}
\newcommand{\vol}{\operatorname{Vol}}
\newcommand{\CovSpec}{{\operatorname{CovSpec}}}
\newcommand{\rstr}{\:\mbox{\rule{0.1ex}{1.2ex}\rule{1.1ex}{0.1ex}}\:}
\begin{document}

\title{Intrinsic Flat Convergence of Covering Spaces}

\author{Zahra Sinaei}
\thanks{{The first author's research was funded by Swiss NSF $P2ELP2\underline~148909$}}
\address{Courant Institute, NYU}
\email{sinaei@cims.nyu.edu}

\author{Christina Sormani}
\thanks{{The second author's research was funded in part by NSF-DMS-10060059 and a PSC-CUNY grant.}}
\address{CUNY Graduate Center and Lehman College}
\email{sormanic@member.ams.org}

\keywords{}

\begin{abstract}
We examine the limits of covering spaces and the covering spectra
of oriented Riemannian manifolds, $M_j$, which converge to a nonzero integral current space, $M_\infty$,  in the intrinsic flat sense. 
We provide examples demonstrating that the covering spaces
and covering spectra need not converge in this setting.   In fact
we provide a sequence of simply connected $M_j$ diffeomorphic
to $\mathbb{S}^4$ 
that converge in the
intrinsic flat sense to a torus $\mathbb{S}^1\times\mathbb{S}^3$.  Nevertheless, 
we prove that if the $\delta$-covers, 
$\tilde{M}_j^\delta$, have
finite order $N$, then a subsequence of the $\tilde{M}_j^\delta$
converge in the intrinsic flat sense to a metric space, 
$M^\delta_\infty$,
which is the disjoint union of covering spaces of $M_\infty$.
\end{abstract}

\maketitle

\section{Introduction}
When sequences of Riemannian manifolds converge smoothly to a limit manifold, their covering spaces converge to covering spaces and their universal covers converge to universal covers.   When one weakens the notion of convergence, such behavior is no longer true.
In fact, one may observe that sequences of increasingly thin flat tori, $M_j=\mathbb{S}^1_{1/j}\times \mathbb{S}^1$, converge in the Gromov-Hausorff sense
to a circle $M_\infty=\mathbb{S}^1$.    Yet their universal covers are Euclidean planes, $\tilde{M}_j=\E^2$.  So the limit of the universal covers
is $\E^2$ which is not a covering space for $\mathbb{S}^1$.    There are also sequences of manifolds $M_j$ which
converge in the Gromov-Hausdorff sense to $M_\infty$ isometric to the Hawaii ring, which has
no universal cover.   

In \cite{SorWei1}, the second author and Guofang Wei defined a notion called a $\delta$-cover, denoted 
$\tilde{M}^\delta$, where the covering maps $\pi:\tilde{ M}^\delta\to M$
are isometries on balls of radius $\delta$.   They proved that when $M_j\GHto M_\infty$ then a subsequence of $\tilde{M}_j^\delta$ converge to a covering space of $M_\infty$.   They applied this notion to prove that the
Gromov-Hausdorff limits of sequences of manifolds with nonnegative Ricci curvature have universal covers.   In \cite{SorWei3}, the notion
of a $\delta$-cover was applied to define the covering spectrum,
$\CovSpec(M)$,
of a Riemannian manifold.  They prove that the covering spectrum is
continuous under Gromov-Hausdorff convergence: 
\be\label{GHCovSpec}
\,\,\,M_j \GHto M_\infty\quad \implies \quad \CovSpec(M_j)\cup \{0\} \to \CovSpec(M_\infty)\cup\{0\}.
\ee    
Additional work on the covering spectrum and related notions
has been conducted by Bart DeSmit, John Ennis, Ruth Gornet, Conrad Plaut, Craig Sutton, Jay Wilkins and Will Wylie \cite{deSmit-Gornet-Sutton}, \cite{deSmit-Gornet-Sutton-2}, \cite{Ennis-Wei}, \cite{Plaut-Wilkins}, \cite{Wilkins}, \cite{Wylie}.

Here we consider sequences of compact oriented Riemannian manifolds, $M_j$, of constant dimension and bounded volume and diameter:
\be\label{DVm}
\vol(M_j)\le V_0,\,\,\, \diam(M_j)\le D_0, \,\,\, \dim(M_j)=m
\ee
which converge in the intrinsic flat sense $M_j \Fto M_\infty$. 
Intrinsic flat convergence was first defined by the second author and Stefan Wenger in \cite{SorWen2} applying work of Ambrosio-Kirchheim \cite{AK}.   They prove that limit spaces obtained under intrinsic flat convergence are either countably 
$\mathcal{H}^m$ rectifiable metric spaces of the same dimension
as the original sequence or the sequence disappears converging to
the $\bf{0}$ limit space.   For example the sequence of collapsing tori,
$M_j=\mathbb{S}^1\times \mathbb{S}^1_{1/j}$, converge in the intrinsic flat sense to
the $\bf{0}$ space.   
The sequence of manifolds which are spheres with increasingly many increasingly thin splines converges in the intrinsic flat sense to a 
sphere (so that only the splines disappear in the limit) \cite{SorWen2}.   Stefan Wenger proved that sequences of oriented manifolds satisfying (\ref{DVm}) have a subsequence which converges in the intrinsic flat sense \cite{Wenger-compactness}.   

In Example ~\ref{2-spheres}, we 
construct a sequence of oriented manifolds, $M_j$, satisfying (\ref{DVm}) diffeomorphic to $\mathbb{RP}^3\times \mathbb{S}^2$ such that
$M_j$ converge in the intrinsic flat sense to $M_\infty$ where $M_\infty$ is simply connected.  The
$\delta$-covers, $\tilde{M}_j^\delta$, also converge in the intrinsic flat sense but to a limit space which is a disjoint pair of spaces
isometric to $M_\infty$.    In this example
$\tfrac{\pi}{2}\in \CovSpec(M_j)$ but $\CovSpec(M_\infty)=\emptyset$
.   

Nevertheless we can prove the following

\begin{thm} \label{Finite}
Suppose $M_j$ are oriented Riemannian manifolds satisfying (\ref{DVm})
and $M_j$ converge in the intrinsic flat sense to connected 
$M_\infty\neq \bf{0}$.   Suppose in addition that $\pi_j: \tilde{M}_j^\delta\to M_j$
are finite covers of order $N$.

Then a subsequence of $\tilde{M}_j^\delta$ converges in the
intrinsic flat sense to $M_\infty^\delta$  which is a metric space
with $N_1$ isometric connected components $\mathring{M}^\delta_\infty$ each of which is a covering space of order $N_2$
over $M_\infty$ where $N_1\cdot N_2=N$.  In addition $\mathring{M}^\delta_\infty$  is covered by the covering
space $\tilde{M}^\delta_\infty$.
\end{thm}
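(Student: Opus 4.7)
The plan is to apply Wenger's compactness theorem to the covers $\tilde{M}_j^\delta$, extract limits of the order-$N$ deck transformations, and use the resulting quotient action on the limit space to read off the component structure.

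First, I would verify the hypotheses of Wenger's compactness \cite{Wenger-compactness} for $\tilde{M}_j^\delta$. Since $\pi_j$ is an order-$N$ local isometry we have $\vol(\tilde{M}_j^\delta)\le NV_0$ and $\dim(\tilde{M}_j^\delta)=m$, and a normal $N$-fold cover of a compact length space of diameter $D_0$ has diameter bounded in terms of $N$ and $D_0$, by lifting a short path between projections and then traversing at most $N-1$ sheet-changing loops in the fiber. Wenger's compactness then yields a subsequence $\tilde{M}_j^\delta \Fto M_\infty^\delta$. Non-triviality $M_\infty^\delta \ne \mathbf{0}$ follows from the identity $\mass(\tilde{M}_j^\delta)=N\,\mass(M_j)$ together with $\mass(M_\infty)>0$ and the fact that on every $\delta$-ball of $M_j$ the cover splits into $N$ disjoint isometric copies, forcing mass to survive into the limit.

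Second, I would extract a limit of the deck group action. For each $j$ the deck group $G_j$ acts by $N$ orientation-preserving integral-current isometries of $\tilde{M}_j^\delta$. Using the common Banach embeddings $\tilde{M}_j^\delta\hookrightarrow Z$ which witness the flat convergence, together with the uniform $1$-Lipschitz bound on each $g_j^{(k)}\in G_j$, an Arzel\`a--Ascoli argument (after passing to a further subsequence) produces limit isometries $g_\infty^{(k)}$ of $M_\infty^\delta$; pushforward of currents promotes these to morphisms of integral current spaces generating a group $G_\infty$. The relation $\pi_j\circ g_j^{(k)}=\pi_j$ passes to the limit, yielding a $G_\infty$-action on $M_\infty^\delta$ whose orbit space coincides with $M_\infty$ and whose total sheet count equals $N$.

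Third, I would read off the component structure. Since $M_\infty$ is connected, $G_\infty$ permutes the connected components of $M_\infty^\delta$ transitively; let $N_1$ be the number of components and $H\le G_\infty$ the setwise stabilizer of a chosen component $\mathring{M}_\infty^\delta$, so the $N_1$ components are pairwise isometric. Then $H$ acts on $\mathring{M}_\infty^\delta$ with orbit space $M_\infty$, realizing $\mathring{M}_\infty^\delta$ as a normal cover of $M_\infty$ of order $N_2=|H|$, and the preserved total sheet count yields $N_1N_2=N$. Finally, the local-isometry property of $\pi_j$ on every $\delta$-ball of $M_j$ survives the limit, so the map $\mathring{M}_\infty^\delta\to M_\infty$ inherits the $\delta$-lift property; by the universal property of the $\delta$-cover \cite{SorWei1}, $\mathring{M}_\infty^\delta$ is then covered by $\tilde{M}_\infty^\delta$.

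The main technical obstacle lies in step two: intrinsic flat convergence compares spaces only through isometric embeddings into a common Banach space, so neither convergence of self-isometries nor commutation with the quotient operation is automatic. The crux is to lift the Arzel\`a--Ascoli convergence of the $g_j^{(k)}$ to convergence of integral current space morphisms, and then to identify the flat limit of the quotient spaces $M_j=\tilde{M}_j^\delta/G_j$ with the quotient $M_\infty^\delta/G_\infty$ of the flat limit; once this compatibility is in place, the bookkeeping in step three is elementary.
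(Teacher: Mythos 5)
Your opening step (Wenger compactness applied to $\tilde M_j^\delta$ with the volume bound $NV_0$ and a diameter bound depending on $N$ and $D_0$) matches the paper. However, two tools you invoke are not available in the form you use them. First, the nontriviality argument via mass is not valid: under intrinsic flat convergence a sequence can have mass bounded below and still converge to $\mathbf{0}$ by cancellation (this is exactly Example~\ref{A.19}), so ``$\mass(\tilde M_j^\delta)=N\mass(M_j)$ forces mass to survive'' proves nothing. The paper instead obtains $M^\delta_\infty\neq\mathbf{0}$ from the last clause of Theorem~\ref{Arz-Asc-Unif-Local-Isom}: the $\pi_j$ are surjective current-preserving isometries on $\delta$-balls, so if the limit of the covers were $\mathbf{0}$ then $M_\infty$ would be $\mathbf{0}$. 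Second, there is no Arzel\`a--Ascoli theorem for merely equicontinuous or $1$-Lipschitz maps under intrinsic flat convergence (the paper notes a counterexample to such a statement), so extracting limit deck transformations from ``the uniform $1$-Lipschitz bound'' plus common embeddings is not justified as written; one must use Theorem~\ref{Arz-Asc-Unif-Local-Isom}, which does apply because deck transformations are current-preserving isometries --- this is precisely how the paper handles its fourth step.

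The more serious gap is the one you yourself flag at the end: your step two asserts that the limit group $G_\infty$ acts on $M^\delta_\infty$ with orbit space $M_\infty$ and ``total sheet count $N$,'' and your closing paragraph concedes that identifying the flat limit of the quotients $M_j=\tilde M_j^\delta/G_j$ with $M^\delta_\infty/G_\infty$ is the crux --- but no argument for it is given, and that identification \emph{is} the substance of the theorem. A priori some lifts of a point of $M_\infty$ could disappear in the limit (fewer than $N$ sheets), or extra limit preimages could appear, or orbits could degenerate to fixed points. The paper closes this with two pointwise arguments you would need: (i) at least $N$ preimages --- for $p_j\to p\in M_\infty$ none of the $N$ lifts $\tilde p_j^k$ disappears, since $\pi_j$ restricted to $\delta$-balls is a surjective current-preserving isometry, so $S(\tilde p_j^k,r)\Fto\mathbf{0}$ would force $S(p_j,r)\Fto\mathbf{0}$, contradicting (\ref{balls-converge}); then Theorem~\ref{B-W-BASIC} gives subconvergence of the lifts, and the limits are distinct because distinct lifts are at mutual distance at least $\delta$; (ii) at most $N$ preimages --- if $N'>N$ limit preimages existed, their approximating sequences would contain two lifts whose projections are distinct yet converge to the same point, and the isometry of $\pi_j$ on $\delta$-balls then forces the two limit preimages to coincide, a contradiction. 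Without an argument of this type, your step three (stabilizer of a component, $N=N_1N_2$) has nothing to stand on; that bookkeeping, together with the transitive permutation of components by limit deck maps and the covering of $\mathring M^\delta_\infty$ by $\tilde M^\delta_\infty$ via the $\delta$-ball isometry, is essentially the paper's first and fourth claims and is fine once the exact sheet count is established.
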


In fact if one takes the isometric product of the sequence in Example~\ref{2-spheres} with a fixed manifold with a finite fundamental group of order $N_2$, we obtain an example of a sequence of manifolds whose $\delta$-covers converge to a pair of disjoint isometric finite order covers of the limit space with $N_1=2$ [See Example~\ref{product}].   As part of the proof of this example
we prove Theorem~\ref{thm-product} which describes the
covering spectra of products of geodesic metric spaces in general.

In Example~\ref{Many more spheres} we construct a sequence of oriented two dimensional manifolds
$M_j$ satisfying (\ref{DVm}) with increasingly many increasingly thin handles which converges in the intrinsic flat sense to $\mathbb{S}^2$ with $\CovSpec(\mathbb{S}^2)=\emptyset$.  In this example, the sequence of $\delta$-covers, $\tilde{M}^\delta_j$ doesn't have any converging subsequence.  These $\delta$ covers are not finite order.
This shows that the assumption of a finite order $\delta$-cover in Theorem \ref{Finite} is crucial.    Note also that
in this example,  there exists $\delta_j \in \CovSpec(M_j)$
such that $\delta_j\to \delta_0>0$ but $\delta_0$ is not in  $\CovSpec(M_\infty)\cup\{0\}$.

In prior work of Wei and the second author \cite{SorWei1}, it is shown that when $M_j \GHto M_\infty$
and $M_j$ are simply connected (so that $\tilde{M}_j^\delta=M_j$)
then $M_\infty=\tilde{M}_\infty^\delta=\tilde{M}_\infty$.   In fact 
the Gromov-Hausdorff limit of $\delta$-covers is a covering of any
$\delta'$-cover of the limit space as long as $\delta'>\delta$.   
This is a key step needed to prove convergence of the covering
spectrum in \cite{SorWei3}.

This is
not true when $M_j \Fto M_\infty$.   In Example~\ref{hHole-appears}
we produce a sequence of oriented 
simply connected four dimensional manifolds $M_j$
satisfying (\ref{DVm})
which have regions $U_j\subset M_j$ isometric to $D^2_\pi\times \mathbb{S}^2_{1/j}$ such that $M_j \setminus U_j$
are not simply connected.   The volumes of the regions $U_j$
converge to $0$ in such a way that they disappear in the limit
forming a hole in the limit space.   In this example $\CovSpec(M_j)=\emptyset$ but $\CovSpec(M_\infty)=\{\pi\}$.

In Example~\ref{2D-hole-appears} we construct a sequence of oriented two dimensional manifolds
$M_j$ satisfying (\ref{DVm}) with increasingly many increasingly dense increasingly small tunnels running
between two regions in a pair of spheres.  
See Figure~\ref{fig-2D-hole-appears} within.
We prove $M_j$ converge in the intrinsic flat sense to a torus, $M_\infty$, due to cancellation of the regions with the increasingly dense tunnels.   We prove 
there exists $\delta_0 \in \CovSpec(M_\infty)$
such that $\delta_0$ is not the limit of any sequence
$\delta_j \in \CovSpec(M_j)$. 

In light of our examples one cannot hope to prove convergence
of the covering spectra of sequences of manifolds as in (\ref{GHCovSpec}), if $M_j$ are only
converging in the intrinsic flat sense to their limit spaces.

This paper begins with a background section reviewing key definitions and theorems related to Gromov-Hausdorff convergence, 
intrinsic flat convergence and pointed intrinsic flat convergence.
We also review $\delta$-covers, the covering spectrum, and 
various new theorems about intrinsic flat convergence.   

In the second section we prove Theorem~\ref{Finite} applying
Wenger's Compactness Theorem of \cite{Wenger-compactness}
and various theorems of the second author appearing in
\cite{Sormani-AA} that have been reviewed in the background
section.

The third section of the paper includes constructive descriptions
and proofs of all the examples along with figures.    We also
state and prove Theorem~\ref{thm-product} in this section.   We close this section with a proof that the sequence of manifolds
$M_j\Fto M_\infty$ in
Example~\ref{Many more spheres} has a sequence of 
$\delta$ covers, $\tilde{M}_j^\delta$, with 
no subsequence that converges in the
pointed intrinsic flat sense.   So one cannot extend to
intrinsic flat convergence the theorem
of the second author and Wei proven in \cite{SorWei3} which states that if $M_j \GHto M_\infty$ compact then $\tilde{M}_j^\delta$ has a subsequence
which converges in the pointed Gromov-Hausdorff sense.

The paper closes with a section of open problems.

\subsection{Acknowledgments}
The first author is grateful to Jeff Cheeger who is serving 
as her mentor at the Courant Institute of Mathematical Sciences.  
{Her postdoc is funded by Swiss National Science Foundation research grant $P2ELP2\underline~148909$. }  The second author's research is supported in part by an individual
NSF Research Grant, NSF DMS-10060059, and a PSC-CUNY grant.
Both authors are grateful to the Mathematical Sciences
Research Institute where they served as a postdoc and
visiting research professor in Fall 2013 supported by MSRI through
NSF Grant No. 0932078 000.
   
\section{Background}
In this section we recall some of the definitions and theorems that will be used in the remainder of the paper.  We first provide some basic background including the definition of a length space and a the
gluing of two metric spaces.
 In Subsection~\ref{Gromov-Hausdorff Convergence} we review the notion of Gromov-Hausdorff convergence,   the Gromov Compactness Theorem and the Gromov Embedding Theorem. The material in this subsection has been gathered from \cite{Gromov, Gromov-poly}. 
 In Subsection~\ref{Intrinsic flat} we first recall the notion of intrinsic flat convergence and continue by reviewing a few results: Theorem~\ref{GH-to-flat} (Gromov-Hausdorff convergence implies intrinsic flat convergence),  Sormani-Wenger Embedding Theorem \cite{SorWen2}, Wenger Compactness Theorem \cite{Wenger-compactness}, and the notion of convergence of points \cite{Sormani-AA}. 
 Subsections~\ref{Delta-Cover} and \ref{CoveringSpectrum} are devoted to a review of  the notions of $\delta$-cover  \cite{SorWei1} and covering spectrum  \cite{SorWei1}. We state theorems which study convergence of $\delta$-covers and the covering spectrum of a sequence of compact length spaces which converge in the  Gromov-Hausdorff sense. See Theorems \ref{delta-cover} \cite{SorWei1} and \ref{CovSpec-convergence} \cite{SorWei1}. In Subsection \ref{ArzelaAscoli} we review Arzela-Ascoli  type theorems both in the Gromov-Hausdorff sense, Theorem~\ref{GH-Arz-Asc} \cite{Grove-Petersen} and the intrinsic flat sense Theorem~\ref{Arz-Asc-Unif-Local-Isom} \cite{Sormani-AA}. We close the background section by reviewing a Bolzano-Weierstrass theorem, Theorem \ref{B-W-BASIC} \cite{Sormani-AA}.

\subsection{Length Spaces}

We first recall the definition of length spaces and geodesic spaces.
See, for example, the textbook of Burago-Burago-Ivanov \cite{BBI}.

\begin{defn}
A  length space is a metric space $(X,d)$ such that every pair of points in the space is joined by a length minimizing rectifiable curve; for every $x,y\in X$
\begin{eqnarray*}
d(x,y)=\inf\{L(C), ~C(0)=x,~C(1)=y\} 
\end{eqnarray*}
where the rectifiable length $L(C)$ obtained as follows,
\be
L(C)=\inf_P\sum_{i=1}^N d(C(t_i),C(t_{i-1}))
\ee
where the infimum is over all partitions $P=\{0=t_0<t_1<...<t_N=1\}$
of arbitrary length.
\end{defn}

\begin{defn}
A metric space is called a geodesic metric space if every
pair of points is joined by a curve, $\gamma$, whose 
rectifiable length, $L(\gamma)$ is
the distance between the points:
\be
d(p,q)=\inf\{ L(C): \,\,C(0)=p,\,\,C(1)=q\}= L(\gamma).
\ee
\end{defn}

We explain now how to glue two length spaces together along a common set (cf. \cite{BBI}).

\begin{defn}
Let $(X_1,d_1)$ and $(X_2,d_2)$ be two length spaces  with
compact subsets $A_i \subset X_i$ and length preserving
surjective maps $f_i:A_i\to A $.
We may glue $X_1$ to $X_2$ along the common set $A$ to
obtain a new metric space, $X_1\sqcup_A X_2$,  which is 
the metric space $(X_1\cup X_2)\slash\sim$  where
$x_1\in X_1$ and $x_2\in X_2$ are equivalent,
 \begin{eqnarray*}
x_1\sim x_2~\text{iff}~ f_1(x_1)=f_2(x_2).
\end{eqnarray*}
$(X_1\cup X_2)\slash\sim$  is equipped with the length metric 
$$
d(p_1,p_2)=\inf\{L(C):~C(0)=p_1,~C(1)=p_2\}
$$
where $L(C)$ is determined by $L_1$ or $L_2$ for segments of $C$ in $X_1$ or $X_2$ respectively. 
\end{defn}

In the next example we see the standard connected sum of
two spheres described by gluing one sphere with a ball removed to
a cylinder and then again to another sphere with a ball removed.   So in this example
we are gluing twice.

\begin{example}\label{metric1}
The metric space formed by gluing
\be
\left(\mathbb{S}^2\backslash B(p_1,\epsilon)\right)
\,\sqcup_{\partial B(p,\epsilon)}\,
\left(\partial B(p,\epsilon)\times [0,L]\right)
\,\sqcup_{\partial B(p,\epsilon)}\,
\left(\mathbb{S}^2\backslash B(p_2,\epsilon)\right)
\ee
is isometric to the metric completion
 of $\mathbb{S}^1\times (0, L+2\pi-2\epsilon)$ endowed
with the metric tensor $g=dr^2+f^2(r)d\theta^2$  where
\begin{align}
f(r)=\left\{\begin{array}{lll}
\sin(r)&r\in(0,\pi-\epsilon]\\
\sin(\epsilon)&  r\in(\pi-\epsilon,L+\pi-\epsilon]\\
\sin(r-\pi+2\epsilon-L)&r\in(L+\pi-\epsilon,L+2\pi-2\epsilon).
\end{array} \right.
\end{align}
Note that the cylinder is of length $L$ and width $\epsilon$ while
the two spheres have fixed size.
\end{example}

\subsection{Gromov-Hausdorff Convergence}\label{Gromov-Hausdorff Convergence}

Gromov defined the Gromov-Hausdorff distance 
between compact metric spaces in \cite{Gromov}.
It is an intrinsic notion built upon the
extrinsic notion of the Hausdorff distance between
subsets $A_j$ in a metric space $Z$:
\be
d_H^Z(A_1, A_2)=\inf\left\{r\,|\,\,A_2\subset T_r(A_1),\,\, A_1 \subset T_r(A_2) \right\}
\ee
where 
\be
T_r(A)=\{z\in Z\,|\, \exists \, a\in A \textrm{ s.t. }d(z, a) <r\}
\ee
is the tubular neighborhood about $A$.

\begin{defn} \label{GH-defn} \cite{Gromov}
Given two compact metric spaces 
$(X_1, d_1)$ and $(X_2, d_2)$ then the Gromov-Hausdorff
distance between the spaces is defined:
\be
d_{GH}((X_1, d_1), (X_2,d_2))
=\inf\left\{d_H^Z(\varphi_1(X_1), \varphi_2(X_2))\,|\,\,\varphi_j: X_j \to Z \right\}
\ee
where the infimum is taken over all common compact metric
spaces $Z$ and all isometric embeddings $\varphi_j: X_j \to Z$:
\be\label{isom-emb}
d_Z(\varphi_j(p), \varphi_j(q))=d_{X_j}(p,q).
\ee
\end{defn}

One says a compact sequence of metric spaces converges in the Gromov-Hausdorff
sense, $(X_j, d_j)\GHto (X_\infty, d_\infty)$,
iff
\be
\lim_{j\to\infty} d_{GH}((X_j, d_j), (X_\infty, d_\infty))=0.    
\ee

In order to prove Gromov-Hausdorff convergence it is
useful to use Gromov's notion of an $\epsilon$-almost isometry:

\begin{defn} \cite{Gromov} 
Let $(X_1, d_1)$ and $(X_2, d_2)$ be two compact metric spaces,
 a map $\phi:X_1 \rightarrow X_2$ is said to be an $\epsilon$-almost isometry, if the following two conditions satisfied
 \begin{eqnarray}
&X_2 \subset T_{\epsilon}(\phi(X_1)),\label{epsilon-isometry1}\\
&|d_2(\phi(x_1),\phi(x_2))-d_1(x_1,x_2)|<\epsilon, ~\text{ for all }~x_1,x_2\in X_1.\label{epsilon-isometry2}
\end{eqnarray}
\end{defn}

\begin{thm}\cite{Gromov}
For compact metric spaces,
 $(X_j, d_j)\GHto (X_\infty, d_\infty)$ iff there exist $\epsilon_j$-almost isometries, 
$\psi_j: X_j \to X_\infty$ with $\epsilon_j\to 0$.
\end{thm}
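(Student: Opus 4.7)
The plan is to prove the two implications separately, using the definition of Gromov--Hausdorff distance directly in each direction.

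For the forward direction, suppose $d_{GH}(X_j, X_\infty) \to 0$. For each $j$, extract from the definition a compact metric space $Z_j$ and isometric embeddings $\varphi_j : X_j \to Z_j$ and $\varphi_\infty^{(j)} : X_\infty \to Z_j$ with $d_H^{Z_j}(\varphi_j(X_j), \varphi_\infty^{(j)}(X_\infty)) < \eta_j$, where $\eta_j \to 0$. Define $\psi_j : X_j \to X_\infty$ pointwise by using the axiom of choice to select, for each $x \in X_j$, some $\psi_j(x) \in X_\infty$ with $d_{Z_j}(\varphi_j(x), \varphi_\infty^{(j)}(\psi_j(x))) < \eta_j$. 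The distortion bound \eqref{epsilon-isometry2} with $\epsilon = 3\eta_j$ then follows from two applications of the triangle inequality in $Z_j$ together with \eqref{isom-emb}. For the density condition \eqref{epsilon-isometry1}, given $y \in X_\infty$, use the other half of the Hausdorff condition to find $x \in X_j$ with $d_{Z_j}(\varphi_\infty^{(j)}(y), \varphi_j(x)) < \eta_j$; then a single triangle inequality gives $d_\infty(y, \psi_j(x)) < 2\eta_j$. Setting $\epsilon_j = 3\eta_j$ finishes this direction.

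For the converse, suppose we are given $\epsilon_j$-almost isometries $\psi_j : X_j \to X_\infty$ with $\epsilon_j \to 0$. The approach is to manufacture an admissible common space $Z_j$ on the disjoint union $X_j \sqcup X_\infty$. I would define a distance by leaving $d_{Z_j}$ equal to $d_j$ on $X_j$ and equal to $d_\infty$ on $X_\infty$, and setting
\begin{equation*}
d_{Z_j}(x, y) \;=\; \epsilon_j + \inf_{x' \in X_j}\bigl\{ d_j(x,x') + d_\infty(\psi_j(x'), y)\bigr\}
\end{equation*}
whenever $x \in X_j$ and $y \in X_\infty$. The $\epsilon_j$ offset ensures positivity on pairs from different sides, and both inclusions $\varphi_j : X_j \hookrightarrow Z_j$ and $\varphi_\infty^{(j)} : X_\infty \hookrightarrow Z_j$ are then automatically isometric. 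Taking $x' = x$ gives $d_{Z_j}(x, \psi_j(x)) \le \epsilon_j$, so $\varphi_j(X_j)$ lies in a $2\epsilon_j$-neighborhood of $\varphi_\infty^{(j)}(X_\infty)$; using the density condition \eqref{epsilon-isometry1}, every $y \in X_\infty$ is within $2\epsilon_j$ of some $\varphi_j(x)$. Hence $d_{GH}(X_j, X_\infty) \le 2\epsilon_j \to 0$.

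The only place anything nontrivial occurs is in verifying that $d_{Z_j}$ satisfies the triangle inequality, specifically the mixed case $x, z \in X_j$ and $y \in X_\infty$. Here one must combine the near-distance-preservation \eqref{epsilon-isometry2} of $\psi_j$ with the triangle inequality in $X_\infty$: for any admissible $x', z'$ in the two infima defining $d_{Z_j}(x,y)$ and $d_{Z_j}(y,z)$, one estimates
\begin{equation*}
d_j(x,z) \;\le\; d_j(x,x') + d_j(x',z') + d_j(z',z)
\;\le\; d_j(x,x') + d_\infty(\psi_j(x'), \psi_j(z')) + \epsilon_j + d_j(z',z),
\end{equation*}
and then inserts $y$ via the triangle inequality in $X_\infty$; the two $\epsilon_j$'s carried by $d_{Z_j}(x,y) + d_{Z_j}(y,z)$ absorb the extra $\epsilon_j$. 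The remaining mixed cases are easier and follow by choosing a matched $x' = y'$ in the two infima. I expect this triangle inequality verification to be the main technical obstacle; everything else is bookkeeping with the defining inequalities of an $\epsilon_j$-almost isometry.
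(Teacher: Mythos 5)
Your proof is correct, and since the paper merely cites this as Gromov's theorem without supplying an argument, there is no proof in the paper to compare against; yours is the standard one. Both directions are handled properly: extract near-optimal embeddings and use a choice function to build $\psi_j$, then conversely build an admissible metric on the disjoint union via the infimum formula with a small additive offset.

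One small inaccuracy in the converse: you single out the mixed triangle-inequality case $x,z\in X_j$, $y\in X_\infty$ as ``the only place anything nontrivial occurs'' and call the remaining mixed cases easier. In fact there are \emph{two} nontrivial mixed cases, and they are symmetric rather than one being harder. Besides the one you treat (which needs the lower distortion bound $d_\infty(\psi_j(x'),\psi_j(z'))\ge d_j(x',z')-\epsilon_j$), the case $a,b\in X_\infty$, $c\in X_j$ — that is, bounding $d_\infty(a,b)$ by $d_{Z_j}(a,c)+d_{Z_j}(c,b)$ — requires the \emph{upper} distortion bound $d_\infty(\psi_j(x'),\psi_j(z'))\le d_j(x',z')+\epsilon_j$, again absorbed by the $2\epsilon_j$ offset. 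This is not an argument that ``follows by choosing a matched $x'=y'$,'' since both sides of the inequality are cross-distances and the choice of $x'$ and $z'$ is forced to be independent. The remaining two mixed cases (only one of $a,b,c$ on the opposite side from the cross-distance on the left) are indeed the routine ones. Also, a cosmetic remark: in the forward direction the triangle inequality gives distortion $\le 2\eta_j$ rather than the $3\eta_j$ you state; this of course does not affect the conclusion since you only need $\epsilon_j\to 0$.
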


\begin{example} \label{metric2}Let 
\be
X_j=\left(\mathbb{S}^2\backslash B(p_1,\epsilon_j)\right)\,\sqcup_{\partial B(p,\epsilon_j)}\,
\left(\partial B(p,\epsilon_j)\times [0,L]\right)
\,\sqcup_{\partial B(p,\epsilon_j)}\,
\left(\mathbb{S}^2\backslash B(p_2,\epsilon_j)\right)
\ee
which are isometric to the metric completion of
$\mathbb{S}^1\times (0, L_j)$ where $L_j=L+2\pi-2\epsilon_j$
with the metric  $dr^2+f^2(r)d\theta^2$  where
\begin{align}
f_j(r)=\left\{\begin{array}{lll}
\sin(r)&r\in(0,\pi-\epsilon_j]\\
\sin(\epsilon_j)&  r\in(\pi-\epsilon_j,L+\pi-\epsilon_j]\\
\sin(r-(L+\pi-2\epsilon_j))&r\in(L+\pi-\epsilon_j,L+2\pi-2\epsilon_j).
\end{array} \right.
\end{align}
Observe that the $f_j(r/L_j)$ converge in $C^0[0,1]$ sense as $\epsilon_j\to 0$ to $f_\infty(r/L_\infty)$ where $L_\infty=L+2\pi$ and
\begin{align}
f_\infty(r)=\left\{\begin{array}{lll}
\sin(r)&r\in(0,\pi]\\
0&  r\in(\pi,L+\pi]\\
\sin(r-\pi-L)&r\in(L+\pi,L+2\pi).
\end{array} \right.
\end{align}
We have $X_j\GHto X_\infty$ where 
$X_\infty=\mathbb{S}^2\sqcup_{\{p_0\}}[0,L]\sqcup_{\{p_1\}} \mathbb{S}^2$.  This can be proven using $\epsilon_j$-almost isometries that map level sets of $r/L_j$ in $X_j$ to corresponding level sets of $r/L_\infty$ in $X_\infty$.
\end{example}

If one has a sequence of complete pointed metric
spaces $(Y_j, d_j, p_j)$ one can define pointed Gromov-Hausdorff
convergence to $(Y_\infty, d_\infty, p_\infty)$ iff
\be
\forall R>0 \,\,\,
\lim_{j\to\infty} 
d_{GH}\left(\left(\bar{B}(p_j,R), d_j\right), \left(\bar{B}(p_\infty,R), d_\infty\right)\right)=0    
\ee
where $\bar{B}(p_j,r)=\{x: \,\, d_j(x,p_j)\le R\}$ is endowed with the
restricted metric $d_j$ from $X_j$.

Gromov then proved the following compactness theorem:

\begin{thm}\label{Gr-compact} \cite{Gromov}
If $(X_j, d_j)$ are a sequence of compact metric spaces
with $\diam(X_j)\le D_0$
and the max number of disjoint balls of radius $r$ that
fit in $X_j$ is uniformly bounded by a common function, $N(r)$,
then a subsequence converges in the Gromov-Hausdorff 
sense to a compact metric space, $(X_\infty, d_\infty)$.
\end{thm}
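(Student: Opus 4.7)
The plan is to follow Gromov's classical diagonal subsequence argument. First I would translate the packing hypothesis into an equivalent covering statement: the centers of any maximal collection of disjoint $r$-balls in $X_j$ form a $2r$-net, so for every $\epsilon>0$ each $X_j$ admits a finite $\epsilon$-net of cardinality at most $N(\epsilon/2)$.  This uniform total boundedness, together with the uniform diameter bound, is what will drive the rest of the argument.

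For each integer $k\ge 1$ I would pick a finite $(1/k)$-net $S_j^k\subset X_j$ with $|S_j^k|\le N(1/(2k))$, arranged inductively so that $S_j^k\subset S_j^{k+1}$.  Fixing $k$, the cardinalities are uniformly bounded as $j$ varies and all pairwise distances lie in $[0,D_0]$, so one may pass to a subsequence along which $|S_j^k|$ stabilizes at some $n_k$ and every pairwise distance in $S_j^k$ converges.  A Cantor diagonal extraction produces a single subsequence (which I relabel as $X_j$) such that, for every $k$ simultaneously, the finite metric spaces $S_j^k$ converge in the sense of convergence of distance matrices to a finite pseudometric space; quotienting out zero-distance pairs gives a genuine finite metric space $S_\infty^k$, and the isometric nested inclusions $S_j^k\subset S_j^{k+1}$ pass to isometric inclusions $S_\infty^k\hookrightarrow S_\infty^{k+1}$ in the limit.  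I would then define $X_\infty$ to be the metric completion of $\bigcup_k S_\infty^k$ equipped with the compatible metric.  Because $S_\infty^k$ is a $(1/k)$-net in $X_\infty$ with at most $N(1/(2k))$ points, $X_\infty$ is totally bounded, and complete by construction, hence compact.

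To conclude Gromov-Hausdorff convergence I would produce explicit $\epsilon_j$-almost isometries as in the theorem immediately above.  Choose $k(j)\to\infty$ slowly enough that the maximal error in the convergence of the distance matrices of $S_j^{k(j)}$ is at most $1/k(j)$, and define $\phi_j:X_j\to X_\infty$ by sending each $x\in X_j$ to the image in $S_\infty^{k(j)}$ of a nearest neighbor in $S_j^{k(j)}$.  Then $\phi_j(X_j)$ contains the $(1/k(j))$-net $S_\infty^{k(j)}$ of $X_\infty$, yielding the density condition (\ref{epsilon-isometry1}), and pairwise distances in $X_j$ are distorted by at most $2/k(j)$ from snapping to nearest net-points plus $1/k(j)$ from the convergence of distance matrices, yielding (\ref{epsilon-isometry2}) with $\epsilon_j\to 0$.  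The main obstacle is the bookkeeping in the middle step: one has to verify that the nested compatibility of the finite nets survives the passage to the limit, so that $\bigcup_k S_\infty^k$ is a single well-defined metric space whose completion is the desired $X_\infty$; once that is in hand, verifying the almost-isometry properties is routine.
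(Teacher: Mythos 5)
Your proposal is a correct rendition of the classical argument: the packing-to-net translation, the diagonal extraction over nested finite nets with converging distance matrices, the completion of the union of limit nets, and the final $\epsilon_j$-almost isometries all fit together as stated (the nesting survives the limit because the distances of the common labels converge along the same subsequence). The paper states this theorem as background and cites Gromov without proof, and your argument is essentially the standard proof of that cited result, so there is nothing in the paper it diverges from.
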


In \cite{Gromov-poly} Gromov proved the following theorem
sometimes referred to as the Gromov Embedding Theorem:

\begin{thm}\label{Gr-embed} \cite{Gromov-poly}
If $(X_j, d_j)$ converges in the Gromov-Hausdorff sense to
$(X_\infty, d_\infty)$ which is compact then there exists 
a common compact metric space, $Z$, and isometric
embeddings $\varphi_j: X_j \to Z$ such that
\be
\lim_{j\to \infty} d_H^Z(\varphi_j(X_j), \varphi_\infty(X_\infty))=0.
\ee
\end{thm}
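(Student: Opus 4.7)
The plan is to construct $Z$ as the disjoint union $X_\infty \sqcup \bigsqcup_j X_j$ equipped with a metric assembled from the near-isometric identifications that Gromov-Hausdorff convergence already supplies. For each $j$, fix $\epsilon_j \to 0$ and a compact metric space $Z_j$ together with isometric embeddings $\iota_j : X_j \hookrightarrow Z_j$ and $\kappa_j : X_\infty \hookrightarrow Z_j$ satisfying $d_H^{Z_j}(\iota_j(X_j),\kappa_j(X_\infty)) \le \epsilon_j$; this is possible directly from the definition of $d_{GH}$ together with the fact that the infimum defining $d_{GH}$ can always be realized up to any small error on the disjoint union endowed with a compatible pseudometric.

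On the disjoint union I would define a metric $d$ as follows: within each $X_j$ use $d_j$, within $X_\infty$ use $d_\infty$; for $x\in X_j$ and $y\in X_\infty$ set $d(x,y) := d_{Z_j}(\iota_j(x),\kappa_j(y))$; and for $x\in X_i, y\in X_j$ with $i\neq j$ both finite, set $d(x,y) := \inf_{z\in X_\infty}\bigl(d(x,z)+d(z,y)\bigr)$. The only nonobvious axiom to check is that this $d$ does not shrink the internal metrics. If $x_1,x_2\in X_j$ and $z\in X_\infty$, then
\[
d(x_1,z)+d(z,x_2) \;=\; d_{Z_j}(\iota_j(x_1),\kappa_j(z))+d_{Z_j}(\kappa_j(z),\iota_j(x_2)) \;\ge\; d_{Z_j}(\iota_j(x_1),\iota_j(x_2)) \;=\; d_j(x_1,x_2),
\]
so no detour through $X_\infty$ is a shortcut; an analogous estimate handles detours from $X_\infty$ to $X_\infty$ through some $X_j$. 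Passing to the metric quotient (identifying distance-zero pairs) and then to the metric completion yields a genuine metric space in which each $X_j$ and $X_\infty$ sits by an isometric embedding $\varphi_j$, $\varphi_\infty$.

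To see that $Z$ is compact, I would verify total boundedness and completeness. For total boundedness, given $\varepsilon>0$ use compactness of $X_\infty$ to choose a finite $\varepsilon$-net $N\subset X_\infty$. For every $j$ with $\epsilon_j<\varepsilon$, each $x\in X_j$ is within $\epsilon_j$ of some $\kappa_j(z)$ in $Z_j$, hence within $2\varepsilon$ of $N$ inside $Z$; for the finitely many smaller indices $j$, append a finite $\varepsilon$-net of each $X_j$. For completeness, any Cauchy sequence either eventually concentrates in a single $X_j$ (and converges there by completeness of the compact space $X_j$) or has distance to $X_\infty$ tending to zero, in which case a nearby sequence in the compact space $X_\infty$ carries the limit over to $Z$. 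Finally, the Hausdorff convergence statement $d_H^Z(\varphi_j(X_j),\varphi_\infty(X_\infty))\le\epsilon_j\to 0$ is an immediate consequence of the construction.

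The hard part is the architectural choice of the gluing: the two requirements, namely preserving each $d_j$ and $d_\infty$ isometrically on the one hand, and keeping $Z$ compact on the other, pull in opposite directions. Defining the cross distances between $X_i$ and $X_j$ ($i\neq j$) only through $X_\infty$, rather than using all of the $Z_j$'s at once, is what makes the triangle inequality survive and simultaneously lets the $\varepsilon$-net of the compact limit $X_\infty$ propagate outward to cover every $X_j$ with $j$ large. The bookkeeping for the triangle inequality across all three types of configurations (within a single $X_j$, between $X_j$ and $X_\infty$, and between distinct $X_i$ and $X_j$) is the most delicate routine check, but no new ideas beyond the ones above are needed.
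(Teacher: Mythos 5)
The paper states Theorem~\ref{Gr-embed} only as background, citing it to Gromov \cite{Gromov-poly} without supplying a proof, so there is no in-paper argument to compare against. Your construction is the standard proof of Gromov's embedding lemma and it is correct: place everything on the disjoint union $X_\infty \sqcup \bigsqcup_j X_j$, keep $d_j$ and $d_\infty$ intact, define $X_j$-to-$X_\infty$ cross-distances via near-optimal realizations $Z_j$ of $d_{GH}(X_j,X_\infty)$, route $X_i$-to-$X_j$ distances through $X_\infty$, quotient by distance-zero pairs and complete, then propagate the compact limit's $\varepsilon$-net to a finite net of the whole space for large $j$ and handle the finitely many small $j$ separately. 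The cases of the triangle inequality you leave as routine (three points in three different $X_i$, or two points in one $X_i$ and a third in $X_j$) do close up: one picks an $\varepsilon$-approximate routing point $z\in X_\infty$ inside the infimum and repeatedly applies the half-inequalities you already verified for configurations involving a single $X_i$ and two points of $X_\infty$. No gap.
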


Recall that a length space is a geodesic space, if the
distance between any pair of points is achieved by a curve,
Gromov proved in \cite{Gromov} that this geodesic
structure is preserved under Gromov-Hausdorff convergence:

\begin{thm}\label{Gr-geod}\cite{Gromov}
If $(X_j,d_j)$ are geodesic metric spaces and they converge
in the Gromov-Hausdorff sense to $(X_\infty, d_\infty)$ then
$(X_\infty, d_\infty)$ is also a geodesic metric space.
\end{thm}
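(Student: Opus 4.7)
The plan is to prove the theorem via the classical midpoint argument, exploiting that $X_\infty$ is compact (hence complete) as a Gromov-Hausdorff limit of compact spaces with bounded diameter. First I would invoke the Gromov Embedding Theorem \ref{Gr-embed} to produce a compact metric space $Z$ and isometric embeddings $\varphi_j: X_j \to Z$ and $\varphi_\infty: X_\infty \to Z$ with $d_H^Z(\varphi_j(X_j), \varphi_\infty(X_\infty)) \to 0$. This reduces the problem of comparing points across different spaces to an extrinsic question inside a single compact space.

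The key step is producing midpoints in the limit. Given $x_\infty, y_\infty \in X_\infty$, I would choose $x_j, y_j \in X_j$ with $\varphi_j(x_j) \to \varphi_\infty(x_\infty)$ and $\varphi_j(y_j) \to \varphi_\infty(y_\infty)$ in $Z$; since the embeddings are isometric, $d_j(x_j, y_j) \to d_\infty(x_\infty, y_\infty)$. Since each $X_j$ is a geodesic space, one can pick a midpoint $m_j \in X_j$ with $d_j(x_j, m_j) = d_j(y_j, m_j) = \tfrac12 d_j(x_j, y_j)$. By compactness of $Z$, a subsequence of $\varphi_j(m_j)$ converges in $Z$, and by the Hausdorff convergence the limit lies in $\varphi_\infty(X_\infty)$, yielding $m_\infty \in X_\infty$. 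Passing to the limit in the midpoint equalities and using continuity of $d_\infty$, I obtain $d_\infty(x_\infty, m_\infty) = d_\infty(y_\infty, m_\infty) = \tfrac12 d_\infty(x_\infty, y_\infty)$.

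Next I would iterate the midpoint construction on the dyadic rationals of $[0,1]$ to define a map $\gamma: \mathbb{Q}_2 \cap [0,1] \to X_\infty$ with $\gamma(0)=x_\infty$, $\gamma(1)=y_\infty$, and $d_\infty(\gamma(s), \gamma(t)) \le d_\infty(x_\infty, y_\infty)\,|s-t|$ for dyadic $s,t$ (the bound follows inductively from the midpoint identities and the triangle inequality). This map is uniformly continuous on a dense subset of $[0,1]$, so by compactness/completeness of $X_\infty$ it extends uniquely to a continuous curve $\gamma: [0,1] \to X_\infty$ still satisfying $d_\infty(\gamma(s),\gamma(t)) \le d_\infty(x_\infty, y_\infty)|s-t|$. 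Setting $s=0,t=1$ gives $L(\gamma) \le d_\infty(x_\infty, y_\infty)$, while the reverse inequality is automatic from the definition of rectifiable length, so $\gamma$ realizes the distance.

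The main obstacle I expect is the standard subtlety in the midpoint step: the limit point $m_\infty$ must be shown to actually live in $X_\infty$ rather than merely in the ambient $Z$. This is handled by noting that $\varphi_\infty(X_\infty)$ is closed in $Z$ (it is the image of a compact space under a continuous map) and that the Hausdorff distance $d_H^Z(\varphi_j(X_j),\varphi_\infty(X_\infty)) \to 0$ forces any subsequential limit of $\varphi_j(m_j)$ to lie in $\varphi_\infty(X_\infty)$. The only other delicate point is the dyadic extension, which succeeds because the uniform Lipschitz bound transfers to the completion and $X_\infty$ is complete.
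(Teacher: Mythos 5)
Your proof is correct and is the standard textbook argument (the midpoint/dyadic iteration argument, as in Burago--Burago--Ivanov). The paper itself does not supply a proof of this statement; it is cited to Gromov as background. Your handling of the two subtle points --- that the limit of midpoints lands in $\varphi_\infty(X_\infty)$ because Hausdorff convergence pushes cluster points into the closed set $\varphi_\infty(X_\infty)$, and that the dyadic map extends continuously because $X_\infty$ is complete --- is exactly right. One small remark for efficiency: once you have shown that every pair of points in $X_\infty$ admits an exact midpoint, you could instead simply invoke the standard lemma that a complete metric space with midpoints is geodesic (the dyadic construction you wrote out is precisely the proof of that lemma), which would shorten the exposition without changing the substance.
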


\subsection{Intrinsic Flat Convergence}\label{Intrinsic flat}

In \cite{SorWen2}, the second author and Wenger defined integral current spaces $(X,d,T)$, as a metric space $(X,d)$ with
an integral current, $T\in \intcurr(\bar{X})$, such that $\set(T)=X$ where $\set(T)$ is the set of positive density for $T$.   Recall that Ambrosio-Kirchheim introduced the notion of an integral current on a 
complete metric space,
$T\in \intcurr(Z)$, and as well as the notions of mass measure, $||T||$, and mass, $\mass(T)$,  and 
push forward, $\varphi_\#T$, and boundary,
$\partial T$, in \cite{AK}.   In \cite{SorWen2}, the second author and Wenger apply these notions of Ambrosio-Kirchheim to define the intrinsic flat distance as follows:

\begin{defn} \label{IF-defn} \cite{SorWen2}
Given two $m$ dimensional precompact integral current spaces 
$M_1=(X_1, d_1, T_1)$ and $M_2=(X_2, d_2,T_2)$ then the 
intrinsic flat 
distance between the spaces is defined:
\be
d_{\mathcal{F}}(M_1, M_2)=\inf\left\{d_F^Z(\varphi_{1\#}T_1, \varphi_{2\#}T_2):\,\,\varphi_j: X_j \to Z \right\}
\ee
where the infimum is taken over all common complete metric
spaces $Z$ and all isometric embeddings $\varphi_j: X_j \to Z$
satisfying (\ref{isom-emb}).   Note that if $M_1=\bf{0}$ then the
intrinsic flat distance is defined by setting $\varphi_{1\#}T_1=0$. 
\end{defn}

Wenger and the second author prove $d_{\mathcal F}$ is a metric on the space of precompact
$m$ dimensional
integral current spaces.
In particular $d_{\mathcal F}$  defines a distance between pairs of
oriented Riemannian manifolds with finite volume.

By constructing a common metric space $Z$ where one can isometrically embed two
nondiffeomorphic  oriented  Riemannian manifolds with diffeomorphic subdomains, the second author and Lakzian provide an upper bound for  the intrinsic flat distance between these two integral current spaces:

\begin{thm}\cite{Lakzian-Sormani}\label{diffeomorphic}
Suppose ${\bf{M_1}}=(M_1,g_1)$ and ${\bf{M_2}}=(M_2,g_2)$ are oriented precompact Riemannian
 manifolds with diffeomorphic subregions $U_i\subset M_i$ and diffeomorphism $\psi_i:U\to U_i$ such that
 \be\label{Three}
 {\psi_1}^*g_1(V,V)<(1+\epsilon)^2{ \psi_2}^* g_2(V,V)\quad \forall V\in TU,
 \ee
 and 
  \be\label{Four}
 {\psi_2}^*g_2(V,V)<(1+\epsilon)^2{ \psi_1}^* g_1(V,V)\quad \forall V\in TU.
 \ee
 Taking the intrinsic diameters 
 \be
D_{U_i}=\sup\{\diam_{M_i}(W):~\textrm{$W$ is a connected component of $U_i$}\}\leq \diam(M_i),
\ee
we define a hemispherical width,
\be\label{a}
a>\tfrac{\arccos(1+\epsilon)^{-1}}{\pi}\max\{D_{U_1},D_{U_2}\}.
\ee
Taking the difference in distances with respect to the outside manifolds, 
\be\label{lambda}
\lambda=\sup_{x,y\in U}|d_{M_1}(\psi_1(x),\psi_1(y))-d_{M_2}(\psi_2(x),\psi_2(y))|,
\ee
we define heights,
\be\label{h}
h=\sqrt{\lambda(\max\{D_{U_1},D_{U_2}\}+\tfrac{\lambda}{4})}
\ee
and
\be\label{barbarh}
\bar{h}=\max \{h,\sqrt{\epsilon^2+2\epsilon}D_{U_1},\sqrt{\epsilon^2+2\epsilon}D_{U_2}\}.
\ee
Then the intrinsic flat distance between settled completions is bounded,
\begin{eqnarray}
d_{\mathcal{F}}(M_1,M_2)\leq  &(2\bar{h}+a)\left(\vol_m(U_1)+\vol_m(U_2)+\vol_{m-1}(\partial U_1)
+\vol_{m-1}(\partial U_2)\right)\nonumber\\
&+\vol_m(M_1\backslash U_1)+\vol_m(M_2\backslash U_2).\label{InequalitySL}
\end{eqnarray}
Here the settled completion is just the collection of points
in the metric completion with positive density.
\end{thm}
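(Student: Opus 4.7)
The plan is to construct a common complete metric space $Z$ together with isometric embeddings $\varphi_i : X_i \to Z$ for $i=1,2$, and then to decompose the difference of pushforward currents $\varphi_{1\#}T_1 - \varphi_{2\#}T_2$ as $A + \partial B$ with explicit mass bounds. Since by the definition of the intrinsic flat distance one has $d_{\mathcal{F}}(M_1, M_2) \le \mass(A) + \mass(B)$ for any such $Z$ and any such splitting, this will immediately yield the claimed estimate.

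For the construction of $Z$, I would glue $M_1$ to $M_2$ along a \emph{bridge} $U \times [0, t_0]$, identifying $(x, 0)$ with $\psi_1(x) \in M_1$ and $(x, t_0)$ with $\psi_2(x) \in M_2$. The bridge carries a warped-product metric of the form $dt^2 + f(t)^2 \bar g$, where $\bar g$ is a reference metric on $U$ and $f$ is a hemispherical profile chosen to interpolate the conformal factors matching $\psi_1^* g_1$ and $\psi_2^* g_2$ at the two ends, thereby absorbing the $(1+\epsilon)$-distortion between the two pulled-back metrics. The total height $t_0 = a + 2\bar h$ combines two contributions: a portion of length $a$ accommodates the conformal factor via a hemispherical arc --- this is precisely why $a > \arccos((1+\epsilon)^{-1})/\pi \cdot \max(D_{U_i})$ is required --- while the additional $2\bar h$ absorbs the discrepancy $\lambda$ between external intrinsic distances measured in $M_1$ versus $M_2$, as quantified by the definition (\ref{h}) of $h$, together with the $\epsilon$-inflated diameters that enter $\bar h$ via (\ref{barbarh}).

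The central step is verifying that each $\varphi_i$ is a genuine isometric embedding into $Z$, not merely $1$-Lipschitz: for $p, q \in X_1$ one must rule out length-minimizing curves in $Z$ that take a shortcut by entering the bridge, possibly crossing to $M_2$, and returning. This reduces to a trigonometric law-of-cosines estimate on a round hemisphere of radius comparable to $\max(D_{U_i})/\pi$, and $a$ is tuned precisely so that no such shortcut beats the direct path in $M_1$. The height $\bar h$ separately rules out shortcuts that re-emerge on the same side at a point whose $M_2$-distance is smaller than its $M_1$-distance, using (\ref{lambda})--(\ref{h}). This hemispherical comparison is the main obstacle; once it is in place, everything else is a routine bookkeeping exercise on currents in the Ambrosio-Kirchheim framework.

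Once the embeddings are secured, I would let $B$ be the integral current carried by the bridge $U \times [0, t_0]$ with its natural orientation. Its boundary satisfies $\partial B = \varphi_{2\#}(T_2 \text{ restricted to } U_2) - \varphi_{1\#}(T_1 \text{ restricted to } U_1) - S$, where $S$ is the lateral boundary current on $\partial U \times [0, t_0]$. Setting $A := \varphi_{1\#}(T_1 \text{ restricted to } M_1 \setminus U_1) - \varphi_{2\#}(T_2 \text{ restricted to } M_2 \setminus U_2) - S$ then yields the required splitting $\varphi_{1\#}T_1 - \varphi_{2\#}T_2 = A + \partial B$. A direct computation on the warped metric gives $\mass(B) \le (a + 2\bar h)\vol(U) \le (a + 2\bar h)(\vol(U_1) + \vol(U_2))/2 \cdot C_\epsilon$ and $\mass(S) \le (a + 2\bar h)(\vol(\partial U_1) + \vol(\partial U_2))$, while $\mass(A)$ additionally contributes the unmatched bulk $\vol(M_1 \setminus U_1) + \vol(M_2 \setminus U_2)$. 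Summing these bounds produces the claimed inequality (\ref{InequalitySL}).
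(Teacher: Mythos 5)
This theorem is cited from Lakzian--Sormani and is not proved in the present paper, so there is no in-paper proof to compare against; I will evaluate your sketch against the construction in the cited source.

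Your high-level strategy---build a common space $Z$ by gluing $M_1$ and $M_2$ along a ``bridge'' over $U$, then decompose $\varphi_{1\#}T_1-\varphi_{2\#}T_2 = A+\partial B$ with $B$ supported on the bridge, and bound $\mass(A)+\mass(B)$---is exactly the Lakzian--Sormani approach, and you correctly identify the verification that $\varphi_1,\varphi_2$ are genuine isometric embeddings (no shortcuts through the bridge) as the technical heart of the argument. The roles you assign to $a$ (hemispherical height absorbing the $(1+\epsilon)$ bi-Lipschitz distortion via a chord-versus-arc comparison) and to $\bar h$ (absorbing the extrinsic distance gap $\lambda$) also match the source.

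Three points need attention. First, a sign error: with $\partial B = \varphi_{2\#}(T_2\rstr U_2) - \varphi_{1\#}(T_1\rstr U_1) - S$ and $A = \varphi_{1\#}(T_1\rstr(M_1\setminus U_1)) - \varphi_{2\#}(T_2\rstr(M_2\setminus U_2)) - S$, one computes $A+\partial B = \varphi_{1\#}T_1 - \varphi_{2\#}T_2 - 2\varphi_{1\#}(T_1\rstr U_1) + \cdots$, which is not the target; you need the opposite orientation on $\partial B$ (so that the bottom face appears with a plus sign and the top with a minus) for the terms supported over $U_i$ to cancel correctly. Second, the claimed bound $\mass(B)\leq (a+2\bar h)(\vol(U_1)+\vol(U_2))/2\cdot C_\epsilon$ carries a spurious $\epsilon$-dependent factor; if the interpolating metric on each slice $U\times\{t\}$ is sandwiched between $\psi_1^*g_1$ and $\psi_2^*g_2$ (which any reasonable interpolation achieves), then the slice volume is at most $\max\{\vol(U_1),\vol(U_2)\}\leq\vol(U_1)+\vol(U_2)$ with no extra constant, which is what the stated inequality requires. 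Third, your repeated reference to ``conformal factors'' is a misreading of hypotheses (\ref{Three})--(\ref{Four}); these are bi-Lipschitz bounds between $\psi_1^*g_1$ and $\psi_2^*g_2$ in all directions, not a conformal relation, and the hemispherical profile must be chosen to dominate both pullback metrics simultaneously, not to match a scalar conformal factor. None of these issues is fatal---they are correctable bookkeeping---but as written the mass bound would not reproduce inequality (\ref{InequalitySL}).
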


The second author and Wenger prove 
the following embedding theorem into a common complete metric space $Z$ [see \cite{SorWen2} Theorem 3.2 and Theorem 4.2]:

\begin{thm}\label{converge}\cite{SorWen2}
If a sequence of 
integral current spaces,
$M_{j}=\left(X_j, d_j, T_j\right)$, converges  in the intrinsic flat sense to an 
integral current space,
 $M_0=\left(X_0,d_0,T_0\right)$, then
there is a separable
complete metric space, $Z$, and isometric embeddings  $\varphi_j: X_j \to Z$ such that
$\varphi_{j\#}T_j$ flat converges to $\varphi_{0\#} T_0$ in $Z$
and thus converge weakly as well. 
\end{thm}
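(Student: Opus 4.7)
\smallskip

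\noindent\textbf{Proof proposal for Theorem~\ref{converge}.}
The plan is to mimic, in the current-space setting, Gromov's proof of \thmref{Gr-embed}: instead of building separate common target spaces $Z_j$ for each pair $(M_j, M_0)$, I will glue all the $Z_j$'s together along their common copy of $X_0$ to produce a single ambient complete separable metric space $Z$ into which every $M_j$ embeds isometrically. The starting point is the definition of the infimum: for each $j$ choose a complete metric space $Z_j$ together with isometric embeddings $\psi_j : X_j \to Z_j$ and $\psi_0^{(j)} : X_0 \to Z_j$ such that
\be
d_F^{Z_j}\!\bigl(\psi_{j\#}T_j,\, \psi_{0\#}^{(j)}T_0\bigr) \;<\; d_{\mathcal F}(M_j, M_0) + 1/j .
\ee
Since an integral current space is countably $\mathcal H^m$-rectifiable and the flat filling can be chosen with support in a separable subset, I may replace $Z_j$ by the closure of $\psi_j(X_j)\cup \psi_0^{(j)}(X_0)$ together with the supports of the filling pair, and so assume each $Z_j$ is separable.

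Next I glue: set $Z' = \bigl(\sqcup_j Z_j\bigr)/\!\sim$, where $\psi_0^{(j)}(x)\sim \psi_0^{(k)}(x)$ for every $j,k$ and every $x\in X_0$. Equip $Z'$ with the gluing pseudometric
\be
d_{Z'}([a],[b]) \;=\; \inf \sum_{i=0}^{N}\, d_{Z_{j_i}}(y_i, z_i),
\ee
the infimum taken over finite chains $a=y_0, z_0\in Z_{j_0}$, then $y_1=z_0\in \psi_0^{(j_1)}(X_0)$, etc., ending at $z_N=b$. Because each $\psi_0^{(j)}$ is isometric, the triangle inequality in $Z_j$ shows that any such chain returning to $Z_j$ is no shorter than a direct segment in $Z_j$; hence the inclusion $Z_j\hookrightarrow Z'$ is isometric, $d_{Z'}$ is a genuine metric, and the identification of $X_0$ across all the $Z_j$ is consistent. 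Let $Z$ be the metric completion of $Z'$; it is separable as a countable union of separable completions. Define $\varphi_j : X_j \to Z$ as $\psi_j$ followed by the inclusion, and $\varphi_0 : X_0 \to Z$ as any $\psi_0^{(j)}$ followed by the inclusion (consistent by the identification). Each $\varphi_j$ and $\varphi_0$ is an isometric embedding.

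Now the flat bound. For each $j$ Ambrosio--Kirchheim's definition of the flat filling furnishes $U_j\in \intcurr_{m+1}(Z_j)$ and $S_j\in \intcurr_m(Z_j)$ with $\psi_{j\#}T_j - \psi_{0\#}^{(j)}T_0 = S_j + \partial U_j$ and $\mass(U_j)+\mass(S_j) < d_{\mathcal F}(M_j,M_0)+1/j$. Pushing $U_j$ and $S_j$ forward by the isometric inclusion $Z_j\hookrightarrow Z$ produces a filling of $\varphi_{j\#}T_j - \varphi_{0\#}T_0$ in $Z$ of the same mass, since push-forward under a $1$-Lipschitz map does not increase mass and commutes with $\partial$. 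Therefore
\be
d_F^{Z}\!\bigl(\varphi_{j\#}T_j,\, \varphi_{0\#}T_0\bigr) \;\le\; d_{\mathcal F}(M_j, M_0) + 1/j \;\longrightarrow\; 0 .
\ee
Weak convergence in $Z$ then follows from the flat convergence combined with the uniform mass bound $\mass(\varphi_{j\#}T_j)=\mass(T_j)$: for any Ambrosio--Kirchheim tuple $(f, \pi_1,\dots,\pi_m)$ of bounded Lipschitz functions on $Z$ one writes $\varphi_{j\#}T_j(f,\pi) - \varphi_{0\#}T_0(f,\pi) = S_j(f,\pi) + U_j(\mathbf{1}, f, \pi_1,\dots,\pi_m)$ (after an easy regularization of $f$), and both terms are controlled by the Lipschitz constants times $\mass(S_j)+\mass(U_j)\to 0$, using the mass-continuity properties of the Ambrosio--Kirchheim multilinear functionals.

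The main obstacle is the second paragraph: verifying that the gluing pseudometric $d_{Z'}$ separates points and that each inclusion $Z_j\hookrightarrow Z$ remains isometric after the identifications. The delicacy lies in ruling out shortcuts through other $Z_k$'s, which is what the isometric nature of $\psi_0^{(k)}$ (on the common piece $X_0$) is there to prevent. Once this gluing step is solid, the flat estimate and the passage from flat to weak convergence are both direct consequences of Ambrosio--Kirchheim's functoriality of push-forward and mass.
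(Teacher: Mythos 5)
This is a background result cited from \cite{SorWen2} (Theorems 3.2 and 4.2 there); the present paper does not reprove it, so there is no in-paper argument to compare against. Your proposal is correct and follows the same strategy the cited source uses: pick near-optimal flat-distance witnesses $(Z_j,\psi_j,\psi_0^{(j)})$ for each pair $(M_j,M_0)$, glue all the $Z_j$ along the common isometric copy of $X_0$ (the current-space analogue of Gromov's chain-gluing in \thmref{Gr-embed}), verify the inclusions $Z_j\hookrightarrow Z$ stay isometric, and push the fillings forward. Two small notes. First, the verification you flag as ``the main obstacle'' is indeed the crux and does go through: any chain from $a$ to $b$ in the same $Z_j$ has all intermediate transition points in the glued copy of $X_0$, so by isometry of each $\psi_0^{(k)}$ its length dominates a chain entirely inside $Z_j$, and the triangle inequality there gives $d_{Z_j}(a,b)$ as a lower bound. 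Second, the parenthetical appeal to the ``uniform mass bound $\mass(\varphi_{j\#}T_j)=\mass(T_j)$'' at the end is unnecessary; flat convergence alone implies weak convergence, since $|S_j(f,\pi)|\le \sup|f|\prod\Lip(\pi_i)\,\mass(S_j)$ and $|U_j(\One,f,\pi)|\le \Lip(f)\prod\Lip(\pi_i)\,\mass(U_j)$, both of which go to zero from the filling estimate you already established.
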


 Combining Gromov's Embedding Theorem with Ambrosio-Kirchheim's Compactness Theorem the second author and Wenger prove another compactness theorem.

\begin{thm} \cite{SorWen2}\label{GH-to-flat}
Given a sequence of $m$ dimensional integral current spaces $M_j=\left(X_j, d_j, T_j\right)$ such that 
$
\left(\bar{X}_{j}, d_{j}\right) \GHto \left(Y,d_Y\right)$,
where $(Y, d_Y)$ is compact, and
\be
\mass(M_j)\le V_0 \,\,\,\textrm{ and } \,\,\, \mass(\partial M_j)\le A_0
\ee
then a subsequence converges  in the 
intrinsic flat sense 
\be
\left(X_{j_i}, d_{j_i}, T_{j_i}\right) \Fto \left(X,d_X,T\right)
\ee
where either $\left(X,d_X,T\right)$ is the ${\bf 0}$ current space
or $\left(X,d_X,T\right)$ is an $m$ dimensional integral current space
with $X \subset Y$ with the restricted metric $d_X=d_Y$.
\end{thm}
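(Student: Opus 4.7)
The plan is to combine Gromov's Embedding Theorem (Theorem~\ref{Gr-embed}) with the Ambrosio--Kirchheim compactness theorem for integral currents on a compact metric space. First I would apply Theorem~\ref{Gr-embed} to the convergence $\bar X_j \GHto Y$: this produces a single compact metric space $Z$ together with isometric embeddings $\varphi_j:\bar X_j \to Z$ and $\varphi_\infty: Y \to Z$ satisfying $d_H^Z(\varphi_j(\bar X_j), \varphi_\infty(Y))\to 0$. I would identify $Y$ with $\varphi_\infty(Y)\subset Z$ throughout.

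Next I would push the currents forward into $Z$ by setting $S_j := \varphi_{j\#} T_j \in \intcurr_m(Z)$. Since $\varphi_j$ is an isometric embedding, the push-forward preserves total variation, so $\mass(S_j)\le V_0$ and $\mass(\partial S_j)=\mass(\varphi_{j\#}\partial T_j)\le A_0$. Applying the Ambrosio--Kirchheim compactness theorem to the uniformly mass-bounded family $\{S_j\}$ on the compact space $Z$, I extract a subsequence (still indexed by $j$) and an integral current $S_\infty\in \intcurr_m(Z)$ with $S_j$ converging to $S_\infty$ in the flat norm of $Z$.

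The key step, and what I expect to be the main obstacle, is to verify that $\spt(S_\infty)\subset Y$ so that the support of the limit lies in the Gromov--Hausdorff limit rather than in some thickening. Flat convergence on the compact space $Z$ implies weak convergence of currents, so $S_j(\omega)\to S_\infty(\omega)$ for every Ambrosio--Kirchheim form $\omega$. If $\omega$ has compact support in the open set $Z\setminus Y$, then $\spt(\omega)\subset Z\setminus T_{\epsilon}(Y)$ for some $\epsilon>0$; the Hausdorff collapse gives $\spt(S_j)\subset \varphi_j(\bar X_j)\subset T_{\epsilon_j}(Y)$ with $\epsilon_j\to 0$, so $S_j(\omega)=0$ for all large $j$ and hence $S_\infty(\omega)=0$. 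Thus $S_\infty$ vanishes on every form supported off $Y$, proving $\spt(S_\infty)\subset Y$. The subtlety is that flat convergence does not preserve supports pointwise; the argument succeeds only because I can pass to the weak convergence of currents induced by flat convergence and invoke the uniform Hausdorff collapse of $\spt(S_j)$.

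Finally I would define $X := \set(S_\infty)$ equipped with the restricted metric $d_X := d_Z|_{X\times X}$ and set $T := S_\infty$. Since $X\subset \spt(S_\infty)\subset Y$ and both $d_X$ and $d_Y$ arise by restriction from $d_Z$, one has $d_X = d_Y$. Either $S_\infty = 0$, giving the $\bf{0}$ current space, or $(X, d_X, T)$ is a genuine $m$-dimensional integral current space. In the nontrivial case, flat convergence $S_j\to S_\infty$ inside the fixed common compact space $Z$, together with the isometric embeddings $\varphi_j$ and the isometric inclusion $X\hookrightarrow Z$, realizes a competitor in the infimum of Definition~\ref{IF-defn} and yields $M_j\Fto (X,d_X,T)$, completing the proof.
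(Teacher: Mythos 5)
Your proof is correct and mirrors the approach of \cite{SorWen2} exactly: Gromov embedding into a single compact $Z$, pushing the currents forward along the isometric embeddings, Ambrosio--Kirchheim compactness, a Hausdorff-collapse argument to force $\spt(S_\infty)\subseteq Y$, and then reading $M_j\Fto(X,d_X,T)$ off from flat convergence in $Z$. The one place to be precise is that Ambrosio--Kirchheim compactness as classically stated produces a \emph{weakly} convergent subsequence $S_j\weaklyto S_\infty$; to upgrade this to flat convergence in $Z$ --- which is what actually dominates $d_{\mathcal{F}}\bigl(M_j,(X,d_X,T)\bigr)$ via the competitor $d_F^Z(\varphi_{j\#}T_j,S_\infty)$ --- you need Wenger's theorem that on a compact metric space weak convergence of integral currents with uniformly bounded mass and boundary mass implies flat convergence, and Sormani--Wenger invoke exactly that result in their proof.
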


Since intrinsic flat limits are always the same dimension as the sequence, the limits are always $\bf{0}$ when the GH limit has
collapsed to a lower dimension.   It is also possible that the
limit is not zero but that $X$ is a strict subset of $Y$ because
some points have disappeared in the limit.

In \cite{Sormani-AA}, the second author clarifies the meaning of disappearance using the embeddings from Theorem~\ref{converge}.
We say $p_j\in X_j$ converges to $p\in X_\infty$ if
\be\label{convergence-point}
\lim_{j\to\infty} \varphi_j(p_j)=\varphi_\infty(p) \in Z
\ee
and we say $p_j$ disappears if
\be\label{disappearing-point}
\lim_{j\to\infty} \varphi_j(p_j)=z \in Z
\ee
but $z\notin \varphi_\infty(X_\infty)$.

In \cite{Sormani-AA} Lemma 4.1, it is proven 
that for almost every $r>0$, $S(p,r)=(\bar{B}(p,r), d_M, T\rstr \bar{B}(p,r))$ is 
an integral current space itself.  Furthermore,
if $p_j$ converges to $p_\infty$ then
\be \label{balls-converge}
S(p_j,r) \Fto S(p_\infty,r)\neq {\bf{0}}
\ee
  and if $p_j$ disappear then
\be\label{balls-disappear}
S(p_j,r) \Fto \bf{0}.
\ee

\begin{example} \label{metric3}
Let $X_j$ be as in Example~\ref{metric2} which converge,
$(X_j, d_j)\GHto (X_\infty, d_\infty)$ where 
$X_\infty=\mathbb{S}^2\sqcup_{\{p_0\}}[0,L]\sqcup_{\{p_1\}} \mathbb{S}^2$.   The $X_j$ have a natural integral current
structure $T_j$ defined by integration over the spheres and
the cylinder between them.   The sequence has 
an intrinsic flat limit
\be
(X_j, d_j, T_j)\Fto (\set(T_\infty), d_\infty, T_\infty)
\ee
where $T_\infty$ is defined by integration over the spheres
in $X_\infty$ and
\be
\set(T_\infty)=X_\infty\setminus (0,L).
\ee     
Here $d_\infty$ is the restricted metric from
$X_\infty$ and we see that the intrinsic flat limit is not a
geodesic space.  This was proven explicitly using the 
definition of intrinsic
flat convergence by the second author in the 
appendix of \cite{SorWen2}.   

Note that one can also prove this example
by applying \cite{Sormani-AA} Lemma 4.1 by examining
which points disappear.   First observe that 
$p_j$ on the cylinders must disappear because
(\ref{balls-converge}) fails to hold for
$r$ sufficiently small:
\be
d_\mathcal{F}(S(p_j,r), {\bf{0}})\le \vol(B(p_j,r))\to 0.
\ee
Meanwhile $p_j$ away from the cylinders don't disappear because
for $r$ sufficiently small $S(p_j,r)$ are all isometric to
a ball in a standard sphere and so we don't have 
(\ref{balls-disappear}).
From this we see that
\be
X_\infty\setminus [0,L] \subset \set(T_\infty) \subset X_\infty\setminus (0,L).
\ee
To include the end points, $p_{1},p_2$, of the
line segment, one can use a density argument:
\be
\liminf_{r\to 0} \frac{\vol(B(p_{i},r)}{r^2} >0
\ee
thus $p_{i}\subset \set(T)=X$ and
$
\set(T_\infty)=X_\infty\setminus (0,L).
$
\end{example}

Wenger proves the following compactness theorem in \cite{Wenger-compactness}.   It is first stated in the language of integral current
spaces in \cite{SorWen2}:

\begin{thm}\label{thm-Wenger-compactness} [Wenger]  \cite{Wenger-compactness}  
Let $m, N, C, D > 0$ and let $M_j=(X_j, d_j, T_j)$
be a sequence of integral current spaces of the same dimension
such that
\be \label{eqn-compact-1}
\mass(T_j)\le V_0 \textrm{ and } \mass(\partial T_j) \le A_0
\ee 
and 
\be \label{eqn-compact-2}
\diam(X_j) \le D
\ee
then there exists an integral current space, $M$,
of the same dimension
(possibly the $\bf{0}$ space) such that
\be
\lim_{j\to\infty}d_{\mathcal{F}}(M_j, M_\infty)=0.
\ee
\end{thm}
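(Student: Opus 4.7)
The plan is to reduce the statement to Ambrosio--Kirchheim's compactness theorem for integral currents inside a \emph{fixed} complete separable metric space. The bulk of the work is constructing such a common ambient into which all the $M_j$ embed isometrically with supports controlled uniformly in $j$.

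First I would isometrically embed each $(X_j, d_j)$ into a common separable Banach space $Z$ via the Kuratowski construction, sending a fixed basepoint $p_j \in X_j$ to the origin. Because $\diam(X_j) \le D$, each $\varphi_j(X_j)$ lies in the ball of radius $D$ about the origin in $\ell^\infty(\N)$. This ball is bounded but not relatively compact in $\ell^\infty$, so one cannot yet apply Ambrosio--Kirchheim compactness, which requires the pushforward currents to be supported in a fixed compact set.

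To overcome this, apply Wenger's isoperimetric inequality for integral currents in a Banach space. The $(m-1)$-cycle $\varphi_{j\#}\bdry T_j$ has mass at most $A_0$, so it admits a filling $S_j \in \intcurr_m(Z)$ with $\bdry S_j = \varphi_{j\#}\bdry T_j$ and $\mass(S_j) \le C(m)\, A_0^{m/(m-1)}$; then the cycle $\varphi_{j\#} T_j - S_j$ has mass at most $V_0 + C\, A_0^{m/(m-1)}$ and is in turn filled by some $U_j \in \intcurr_{m+1}(Z)$ with controlled mass $\le C'(m)(V_0 + C A_0^{m/(m-1)})^{(m+1)/m}$. Since $d_F^Z(\varphi_{i\#}T_i, \varphi_{j\#}T_j)$ is bounded in terms of $\mass(U_i - U_j)$ and $\mass(S_i - S_j)$, a Gromov-style net argument on the supports of the fillings, combined with the uniform mass bounds, lets one arrange that all $\varphi_{j\#}T_j$ are supported in a common compact subset $K$ of a separable complete metric space $Z'$; effectively, the filling currents provide the ``bounded filling radius'' structure that replaces GH-precompactness of the sequence $(X_j, d_j)$.

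With this common compact support in place, invoke Ambrosio--Kirchheim's compactness theorem: the bounds $\mass(T_j) \le V_0$ and $\mass(\bdry T_j) \le A_0$ yield a subsequence $\varphi_{j_k\#} T_{j_k}$ converging weakly to some $T_\infty \in \intcurr_m(Z')$, and the filling estimates upgrade weak convergence to flat convergence in $Z'$. Setting $M_\infty := (\set(T_\infty), d_{Z'}, T_\infty)$ produces the limiting integral current space (or the $\mathbf 0$ space when $T_\infty = 0$), and Definition~\ref{IF-defn} then yields $d_{\mathcal F}(M_{j_k}, M_\infty) \to 0$. The main obstacle is precisely the filling/compact-support step: balancing the isometric embeddings against uniform control of the fillings requires the deep isoperimetric inequality for currents in metric spaces. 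Without a Gromov--Hausdorff hypothesis, Theorem~\ref{Gr-compact} does not give a common compact ambient for free, which is exactly why Wenger's theorem is strictly stronger than Theorem~\ref{GH-to-flat}.
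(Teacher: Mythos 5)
This theorem is stated in the paper and cited from \cite{Wenger-compactness}; the paper contains no proof of its own, so your attempt has to be measured against Wenger's actual argument rather than anything in the text.

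Your outline correctly names the endgame (pushforwards supported in a common compact set plus Ambrosio--Kirchheim compactness), but the step that supplies the common compact set is exactly where the theorem is hard, and your sketch does not execute it. Kuratowski embedding with a common basepoint puts every $\varphi_{j\#}T_j$ inside the radius-$D$ ball of $\ell^\infty$, which is bounded but nowhere near compact. You then apply the isoperimetric inequality to obtain fillings $S_j$ of $\varphi_{j\#}\partial T_j$ and $U_j$ of the cycle $\varphi_{j\#}T_j-S_j$ with controlled \emph{mass}; but nothing controls where in $\ell^\infty$ those fillings are supported, nor their diameters, and distinct $j$ give genuinely unrelated embeddings, so there is no reason $\mass(U_i-U_j)$ or $\mass(S_i-S_j)$ decreases along any subsequence. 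The assertion that ``a Gromov-style net argument on the supports of the fillings \ldots lets one arrange that all $\varphi_{j\#}T_j$ are supported in a common compact $K$'' is precisely the missing content, not a routine step: you have neither a net nor an ambient in which to take one.

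Wenger's actual route bypasses global filling entirely. For each $\epsilon>0$ and each $j$ he produces a \emph{thick/thin decomposition} of $\spt T_j$: a subset $\Sigma_j^\epsilon$ on which the mass density at scale $\epsilon$ is bounded below by some $c(m,\epsilon)>0$, so that the mass bound $\mass(T_j)\le V_0$ caps the number of disjoint $\epsilon$-balls in $\Sigma_j^\epsilon$ by a function $N(\epsilon)$ independent of $j$. A local cone/isoperimetric inequality then lets him replace $T_j$ by a current supported near $\Sigma_j^\epsilon$ at flat distance $\lesssim \epsilon$. The modified spaces satisfy Gromov's ball-counting criterion, hence are GH-precompact, and Gromov's embedding theorem (\thmref{Gr-embed}) --- not Kuratowski --- gives the common compact ambient in which Ambrosio--Kirchheim compactness applies; the $\epsilon$-approximation step is what upgrades this to total boundedness in $d_{\mathcal F}$. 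So the isoperimetric inequality is deployed \emph{locally at scale $\epsilon$}, to excise thin parts, rather than globally to fill $T_j$. That decomposition is the idea your sketch is missing.
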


In particular, sequences of oriented Riemannian manifolds without boundary
with a uniform upper bound on volume and
on diameter have a subsequence which 
converges in the intrinsic flat sense
to an integral current space.  The possibility that $M_\infty$
might be the $\bf{0}$ space is a disadvantage of this compactness
theorem when compared with Gromov's Compactness Theorem. For example if  $M_j=\mathbb{S}^1_{1/j}\times \mathbb{S}^1$ then $M_j \GHto \mathbb{S}^1$
but $M_j \Fto \bf{0}$.  

\subsection{Pointed Intrinsic Flat Convergence}

Urs Lang and Stefan Wenger extended the notion of an integral
current defined as in Ambrosio-Kirchheim to a locally integral current
\cite{Lang-Wenger}.  They write $T\in \intcurr_{loc,m}(Z)$.   
Unlike integral currents,
locally integral currents need not have finite mass.  They do have 
$||T||(B(p,r))<\infty$ for all balls.   

Thus one can naturally extend the notion
of an integral current space as follows

\begin{defn}\label{defn-locally}
A locally integral current space
$M=(X,d,T)$ is a metric space $(X,d)$ with a locally
integral current structure $T\in \intcurr_{loc,m}(\bar{X})$ and 
$X=\set(T)$.
\end{defn}   

The advantage of
this new notion is that it includes complete noncompact oriented
Riemannian manifolds with infinite volume.    

Almost every ball $S(p,r)=(B(p,r), d, T\rstr B(p,r))$ in 
such a locally integral current
space is an integral current space itself.   In particular
$\mass(S(p,r))$ is finite.  

One can then naturally
define pointed intrinsic flat convergence:

\begin{defn} \label{defn-pointed-IF-conv}
A sequence  of pointed locally integral current
spaces $(M_i, p_i)$ converge in the
pointed intrinsic flat sense to $(M_\infty, p_\infty)$ iff: 
\be\label{pted-flat}
\textrm{for almost every } r>0
\textrm{ we have }
S(p_i, r) \Fto S(p,r).
\ee
\end{defn}

As a consequence of the Lang-Wenger's Pointed Compactness Theorem
in \cite{Lang-Wenger}, one immediately has the following pointed compactness theorem for locally integral current spaces:

\begin{thm}[Lang-Wenger]
If a sequence of pointed locally integral current spaces, 
$(M_j,x_j)=((X_j, d_j, T_j), x_j)$
has for all $r>0$ the uniform bound
\be
\sup_{j\in \mathbb{N}}\left(\mass(T_j \rstr B(x_j, r)) + \mass (\partial T_j \rstr B(x_j,r))\right)<\infty
\ee
then a subsequence converges in the pointed intrinsic flat sense
to a locally integral current space (possibly the $\bf{0}$ space).
\end{thm}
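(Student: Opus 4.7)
The plan is to apply Wenger's compactness theorem (\thmref{thm-Wenger-compactness}) to an exhaustion by balls, combine the results via a diagonal argument, and then carefully manage ambient embeddings so that the limits at different radii paste together consistently into a single locally integral current.

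First I would fix an increasing sequence of radii $r_n\nearrow\infty$. By the Ambrosio--Kirchheim slicing theorem (invoked already in \cite{Sormani-AA} Lemma~4.1), for all but countably many $r>0$ the restriction $T_j\rstr\bar{B}(x_j,r)$ is an integral current with $\mass(\partial(T_j\rstr\bar{B}(x_j,r)))<\infty$, so that $S(x_j,r)=(\bar{B}(x_j,r),d_j,T_j\rstr\bar{B}(x_j,r))$ is an integral current space of the same dimension as $M_j$. Since the bad radii form a countable set for each $j$, I can pick the $r_n$ from a common good set. The hypothesis then provides uniform bounds $\mass(S(x_j,r_n))\le C_n$ and $\mass(\partial S(x_j,r_n))\le C_n'$, while trivially $\diam(\bar{B}(x_j,r_n))\le 2r_n$, so \thmref{thm-Wenger-compactness} applies at each radius.

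Second I would apply \thmref{thm-Wenger-compactness} recursively and diagonalize: extract a subsequence along which $S(x_j,r_1)\Fto S_\infty^{(1)}$, then a further subsequence along which $S(x_j,r_2)\Fto S_\infty^{(2)}$, and so on, finally passing to the diagonal to obtain one subsequence (still denoted $\{M_j\}$) such that $S(x_j,r_n)\Fto S_\infty^{(n)}$ for every $n$. Each $S_\infty^{(n)}$ is an integral current space, possibly the $\bf{0}$ space.

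The main obstacle is coherence of these limits: the $S_\infty^{(n)}$ must be realized as nested restrictions of a single locally integral current on a common ambient space. To handle this, I would invoke \thmref{converge} at stage $n=1$ to produce a separable complete metric space $Z_1$ with isometric embeddings $\varphi_j^{(1)}:\bar{B}(x_j,r_1)\to Z_1$ on which the push-forward currents flat-converge, then at stage $n+1$ extend the stage-$n$ embeddings using a Kuratowski-style gluing of $Z_n$ with the new ambient space for $\bar{B}(x_j,r_{n+1})$, arranging that $\varphi_j^{(n+1)}$ restricts to $\varphi_j^{(n)}$ on $\bar{B}(x_j,r_n)$. Taking the direct limit and completing yields one separable complete metric space $Z_\infty$, a basepoint $x_\infty=\lim_j\varphi_j(x_j)\in Z_\infty$, and a locally integral current $T_\infty\in\intcurr_{loc,m}(Z_\infty)$ with $T_\infty\rstr\bar{B}(x_\infty,r_n)$ equal to the flat limit of the $\varphi_{j\#}^{(n)}(T_j\rstr\bar{B}(x_j,r_n))$.

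Finally I would set $M_\infty=(\set(T_\infty),d_\infty,T_\infty)$, which by \defref{defn-locally} is a pointed locally integral current space (possibly the $\bf{0}$ space). To verify \defref{defn-pointed-IF-conv} it remains to check $S(x_j,r)\Fto S(x_\infty,r)$ for almost every $r>0$, not only for $r=r_n$. Using the Ambrosio--Kirchheim slicing inequality, the flat distance between $S(x_j,r)$ and $S(x_j,r_n)$ is controlled by $\mass(T_j\rstr(\bar{B}(x_j,r)\setminus\bar{B}(x_j,r_n)))$ plus a slice term that is integrable in $r$, which is uniformly small off a null set as $r$ approaches some $r_n$. Combined with the already-established convergence at the $r_n$, this gives the required pointed intrinsic flat convergence.
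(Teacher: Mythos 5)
You should note first that the paper does not prove this statement at all: it is quoted as an immediate consequence of the Lang--Wenger compactness theorem for locally integral currents in \cite{Lang-Wenger}, so a self-contained argument must essentially reprove their result. Your outline reduces the problem to Wenger compactness on each ball plus diagonalization, and the entire difficulty is then concentrated in your third step, which is asserted rather than proved. Theorem~\ref{converge} produces, for each fixed $n$, \emph{some} separable complete space $Z_n$ and embeddings of the spaces $\bar{B}(x_j,r_n)$ realizing flat convergence, but these embeddings are only determined up to isometry and vary with $j$; there is no canonical common subset of $Z_n$ and of the stage-$(n+1)$ ambient space along which a gluing would force the compatibility $\varphi_j^{(n+1)}|_{\bar{B}(x_j,r_n)}=\varphi_j^{(n)}$ for all $j$ simultaneously while preserving flat convergence at both radii. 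Producing such coherent embeddings --- equivalently, showing that the radius-$r_{n+1}$ limit restricted to the ball about the limit basepoint is the radius-$r_n$ limit, so that the $S_\infty^{(n)}$ really are nested restrictions of one locally integral current --- is precisely the nontrivial content of the Lang--Wenger theorem, and writing it as a ``Kuratowski-style gluing'' construction step assumes what has to be proven. Note also that the basepoint may disappear in the limit (the limit near $x_j$ can be ${\bf 0}$), so the point $x_\infty=\lim_j\varphi_j(x_j)$ only exists after a common embedding has been produced; it cannot be used to organize the construction beforehand.

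Two smaller gaps. The hypothesis bounds $\mass(T_j\rstr B(x_j,r))$ and $\mass((\partial T_j)\rstr B(x_j,r))$, which is not the same as a bound on $\mass\bigl(\partial(T_j\rstr \bar{B}(x_j,r))\bigr)$: the latter contains the slice of $T_j$ by the distance function at radius $r$, and a bound uniform in $j$ at a \emph{common} radius requires the slicing inequality together with a Fatou/Chebyshev selection of radii (your ``common good set'' only guarantees integrality of each restriction for each fixed $j$, not uniformity in $j$), plus possibly a further subsequence. Finally, in your last step the mass of $T_j$ in the annulus $\bar{B}(x_j,r)\setminus\bar{B}(x_j,r_n)$ need not be small uniformly in $j$ as $r\downarrow r_n$ --- mass can concentrate near the sphere of radius $r_n$ for varying $j$ --- so the asserted control of $d_{\mathcal F}(S(x_j,r),S(x_j,r_n))$ does not follow; the usual route to a.e.-$r$ convergence uses monotonicity of $r\mapsto \|T_j\|(B(x_j,r))$ and lower semicontinuity of mass \emph{after} the common embedding is available. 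Given that the statement is exactly the theorem of \cite{Lang-Wenger}, citing it, as the paper does, is the appropriate proof; otherwise you must supply the coherence argument in full.
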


%%%%%%%%%%%%%%%

\subsection{$\delta$-Covers}\label{Delta-Cover}
First, we recall the definition of covering space from
Spanier's texbook  \cite{Spanier-text}.

\bd 
We say a connected space $Y$ is a covering space of $X$ if there is a
continuous map $\pi :Y \to X$ such that $\forall x \in X$ there is an open neighborhood 
$U$ such that $\pi^{-1}(U)$ is a disjoint union of open subsets of $Y$ each of which is mapped homeomorphically onto $U$ by $\pi$. 
\ed

Let $Y$ be a locally path connected length space and $\mathcal{U}$ be any open covering of $Y$. For any $p \in Y$, Spanier  \cite{Spanier-text} shows,  there is a covering space, $\tilde{Y}_{\mathcal U}$, of $Y$ with covering group $\pi_1(Y,{\mathcal U},p)$, where $\pi_1(Y,{\mathcal U},p)$ is a normal subgroup of $\pi_1(Y,p)$, generated by homotopy classes of closed paths having a representative of the form $\alpha^{-1}\circ\beta\circ\alpha$, where $\beta$ is a closed path lying in some element of $\mathcal{U}$ and $\alpha$ is a path from $p$ to $\beta(0)$. 

The second author and Wei  define the notion of $\delta$-cover as follows in \cite{SorWei1}:

\bd\cite{SorWei1}\label{defn-delta-cover}
 Given $\delta> 0$, the $\delta$-cover, denoted $\tilde{Y}^{\delta}$, of a length space $Y$, is defined to be $\tilde{Y}_{\mathcal{U}_{\delta}}$ where $\mathcal{U}_{\delta}$ is the open covering of $Y$ consisting of all balls of radius $\delta$.
The covering group will be denoted $\pi_1(Y,\delta, p)\subset\pi_1(Y, p)$ and the group of deck transforms of $\tilde{Y}^{\delta}$ will be denoted $G(Y,\delta)=\pi_1(Y,p)/{\pi_1(Y,\delta,p)}$.
\ed

\begin{ex}\label{ex-RP-delta}
Consider the standard real projective space $Y=\mathbb{RP}^k$.
For $\delta\le \pi/2$,  balls of radius $\delta$ are simply connected and so $\pi_1(Y,\delta,p)$ is trivial.  Thus we have $\tilde Y^\delta$
is the universal cover of $\mathbb{RP}^k$ which is $\mathbb{S}^k$.  For $\delta>\pi/2$, there is at least one ball which is not simply connected, so we have a nontrivial $\pi_1(Y,\delta, p)\subset \pi_1(Y)$.  Since $\pi_1(Y)$ only contains two elements then 
$\pi_1(Y,\delta, p)=\pi_1(Y)$ and so $\tilde{Y}^\delta=Y$.   
\end{ex}

Moreover they prove:
\bl \cite{SorWei1}
 The $\delta$-covers of complete length spaces are monotone
in the sense that if $r < t$, then $\tilde{X}^r$ covers $\tilde{X}^ t$. In fact, $\tilde{X}^ r$ is the $r$-cover
of the complete length space $\tilde{X}^ t$.
\el 

\begin{thm}\label{delta-cover}\cite{SorWei1}
Let $X_j$ be a sequence of compact, connected, locally path-connected length spaces that converge
to $X_\infty$ in Gromov-Hausdorff topology. The $\delta$-covering of $X_j$, $(\tilde{X}^\delta_j,\tilde{p}_j)$, converges in the pointed Gromov-Hausdorff  metric to $({X}_\infty^{\delta},\tilde{p}_{\infty})$, then $({X}_\infty^{\delta},\tilde{p}_{\infty})$ is a covering space of $X_\infty$, which is covered
by the $\delta$-cover of $X_\infty$, $\tilde{X}_\infty^{\delta}$. Furthermore, for all $\delta_2>\delta$, $X_\infty^{\delta}$ covers $\tilde{X}_\infty^{\delta_2}$. So we have
covering projections mapping
\be
\tilde X_\infty^{\delta}\to X_\infty^{\delta}\to\tilde{X}_\infty^{\delta_2}\to X_\infty.
\ee
\end{thm}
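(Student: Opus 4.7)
The plan is to construct a covering projection $\pi_\infty\colon X_\infty^\delta\to X_\infty$ as an Arzela--Ascoli limit of the given covering projections $\pi_j\colon\tilde{X}_j^\delta\to X_j$, and then to locate the covering subgroup of $\pi_\infty$ in $\pi_1(X_\infty,p_\infty)$ between $\pi_1(X_\infty,\delta,p_\infty)$ and $\pi_1(X_\infty,\delta_2,p_\infty)$ for every $\delta_2>\delta$.

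First I would apply Theorem~\ref{Gr-embed} together with its pointed version to fix $\epsilon_j$-almost isometries $f_j\colon X_j\to X_\infty$ and, on each bounded region, $\epsilon_j$-almost isometries $g_{j,R}\colon B(\tilde{p}_\infty,R)\to\tilde{X}_j^\delta$, with $\epsilon_j\to 0$. Since each $\pi_j$ is $1$-Lipschitz, the compositions $f_j\circ\pi_j\circ g_{j,R}$ are equicontinuous on bounded sets, so a diagonal Arzela--Ascoli extraction along a subsequence yields a $1$-Lipschitz limit $\pi_\infty\colon X_\infty^\delta\to X_\infty$ with $\pi_\infty(\tilde{p}_\infty)=p_\infty$.

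Next I would verify that $\pi_\infty$ is a covering projection by pushing two properties of $\pi_j$ through the limit. The first is that $\pi_j$ restricts to an isometry on every open $\delta$-ball of $\tilde{X}_j^\delta$; for $\tilde{q}\in X_\infty^\delta$, $r<\delta$, and an approximating sequence $\tilde{q}_j\to\tilde{q}$, taking the Gromov--Hausdorff limit of the isometries $\pi_j\colon B(\tilde{q}_j,\delta)\to B(\pi_j(\tilde{q}_j),\delta)$ shows that $\pi_\infty$ restricts to an isometry from $B(\tilde{q},r)$ onto $B(\pi_\infty(\tilde{q}),r)$. The second is that any two distinct lifts of a single point in $X_j$ lie at distance $\geq 2\delta$ (any connecting path must pass through $\pi_j^{-1}(X_j\setminus B(\pi_j(\tilde{q}),\delta))$, contributing length at least $2\delta$); this separation passes to the limit. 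Together these show $\pi_\infty^{-1}$ of each $\delta$-ball of $X_\infty$ is a disjoint union of isometric copies, so $\pi_\infty$ is a covering projection. Since loops in $\delta$-balls of $X_\infty$ then lift to loops in $X_\infty^\delta$, the covering subgroup of $\pi_\infty$ contains $\pi_1(X_\infty,\delta,p_\infty)$, giving the covering map $\tilde{X}_\infty^\delta\to X_\infty^\delta$.

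The main obstacle is showing the covering subgroup of $\pi_\infty$ is contained in $\pi_1(X_\infty,\delta_2,p_\infty)$ for each $\delta_2>\delta$. Given a loop $\gamma$ in $X_\infty^\delta$ based at $\tilde{p}_\infty$ with $\alpha=\pi_\infty\circ\gamma$, my approach would be to fix once and for all a finite subdivision of $\gamma$ into arcs whose images lie in $\delta$-balls of $X_\infty^\delta$, transfer this subdivision to $\tilde{X}_j^\delta$ via $g_{j,R}$, and close the approximate arcs into loops $\tilde{\gamma}_j$ in $\tilde{X}_j^\delta$ by concatenating short connecting paths of length $O(\epsilon_j)$. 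The projected loops $\pi_j\circ\tilde{\gamma}_j$ then lie in $\pi_1(X_j,\delta,p_j)$, hence admit decompositions as products of conjugates of loops in $\delta$-balls of $X_j$. Pushing forward by $f_j$ produces a corresponding decomposition for a loop homotopic to $\alpha$ in $X_\infty$, in which the small loops are now supported in $(\delta+c\epsilon_j)$-balls. The strict inequality $\delta_2>\delta$ supplies precisely the slack needed to absorb the Gromov--Hausdorff error, so for $j$ sufficiently large the small loops lie in $\delta_2$-balls and $\alpha\in\pi_1(X_\infty,\delta_2,p_\infty)$. The delicate point is to ensure the number of factors and the connecting paths remain controlled uniformly in $j$ and that the pushed-forward decomposition represents the correct homotopy class of $\alpha$; this is handled by performing all approximations relative to the one fixed subdivision chosen in $X_\infty^\delta$.
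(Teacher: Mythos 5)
This theorem is cited from \cite{SorWei1}; the present paper only states it as background and gives no proof, so there is nothing in-paper to compare against. Evaluating your proposal on its own merits: the overall architecture is right and matches the standard approach --- obtain $\pi_\infty$ as a Grove--Petersen limit of the $\pi_j$, verify it is a covering trivialized over $\delta$-balls using the $2\delta$-separation of fibers, deduce $\tilde X_\infty^\delta\to X_\infty^\delta$, and then squeeze the covering subgroup $H$ of $\pi_\infty$ between $\pi_1(X_\infty,\delta,p_\infty)$ and $\pi_1(X_\infty,\delta_2,p_\infty)$. Your $2\delta$-separation argument is correct, and the first two steps are sound.

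The gap is in the last step, and it is a genuine one, not a detail. You transfer the closed lift $\gamma$ to closed loops $\tilde\gamma_j$ in $\tilde X_j^\delta$, note $[\pi_j\circ\tilde\gamma_j]\in\pi_1(X_j,\delta,p_j)$, and then propose to write this class as a product of conjugates of loops in $\delta$-balls of $X_j$ and push the decomposition forward by $f_j$. Two things break. First, $\pi_1(X_j,\delta,p_j)$ is only \emph{normally generated} by the Spanier loops $\alpha^{-1}\circ\beta\circ\alpha$; a particular element carries no canonical word, the number of factors is uncontrolled, and the conjugating arcs $\alpha$ can be arbitrarily long. Fixing a subdivision of $\gamma$ upstairs in $X_\infty^\delta$ does not bound the word length of a representing product downstairs in $X_j$: the subdivision produces open arcs in $\delta$-balls, not closed loops, and closing them up to the base point requires paths whose lengths you have not controlled and whose number you have not matched to the number of Spanier generators. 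So the constant $c$ in ``$(\delta+c\epsilon_j)$-balls'' is not defined. Second, $f_j$ is only an $\epsilon_j$-almost isometry, which in general is not even continuous, so ``pushing forward a decomposition'' is not a well-defined operation on paths or on homotopy classes; even after replacing $f_j$ by a continuous GH-approximation, there is no reason the image loop in $X_\infty$ represents $[\alpha]$. Because the inclusion $H\subset\pi_1(X_\infty,\delta_2,p_\infty)$ is exactly what distinguishes this theorem from the trivial observation that $X_\infty^\delta$ is \emph{some} cover trivial over $\delta$-balls, this step needs a different mechanism --- one that tracks lifts and the $2\delta$- and $2\delta_2$-separations of fibers metrically, using the monotonicity $\tilde X_j^\delta\to\tilde X_j^{\delta_2}$ inside each $X_j$, rather than attempting a homotopical decomposition and transfer that almost isometries cannot support.
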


They also show that the $\delta$-cover of a converging sequence of compact length spaces has converging subsequence.

\bt\label{convergence-delta}\cite{SorWei3}
If a sequence of compact length space $X_j$ converge to a compact length space $X_\infty$ in the Gromov-Hausdorff topology, then for any $\delta>0$ there is a subsequence of $X_j$ such that their $\delta$-cover also converges  in the pointed Gromov-Hausdorff topology.
\et

\subsection{Covering Spectrum}\label{CoveringSpectrum}
The second author and Wei use the notion of $\delta$-cover to introduce the notion of covering spectrum for  complete length spaces:

\begin{defn}\cite{SorWei3}
Given a complete length space $X$, the covering spectrum
of $X$, denoted $\CovSpec(X)$ is the set of all $\delta > 0$ such that
\be
\tilde{X}^{\delta}\neq\tilde{X}^{\delta'}
\ee
for all $\delta' > \delta$.
\end{defn}

For a compact length space $X$, the $\CovSpec(X)\subset (0, \diam(X))$.   Applying Example~\ref{ex-RP-delta} we see that $\CovSpec(\mathbb{RP}^k)=\{\pi/2\}$.

They also prove 

\begin{thm}\cite{SorWei3}\label{CovSpec-convergence}
If $X_j$ is a sequence of compact length spaces converging to a compact length space $X_\infty$, then for each $\delta\in \CovSpec(X_\infty)$, there is $\delta_j\in \CovSpec(X_j)$ such that $\delta_j\to \delta$. Conversely, if $\delta_j \in \CovSpec( X_j)$ and $\delta_j\to \delta > 0$, then  $\delta \in \CovSpec (X_\infty)$. 
\end{thm}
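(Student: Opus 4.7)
The plan is to prove both directions using Theorem~\ref{delta-cover} (identifying pointed Gromov--Hausdorff limits of $\delta$-covers as intermediate covers of $X_\infty$) and Theorem~\ref{convergence-delta} (subsequential convergence of $\delta$-covers), combined with the characterization of covering spectrum elements as jump points of the monotone family $\delta' \mapsto \tilde X^{\delta'}$.

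For the forward direction I would fix $\delta \in \CovSpec(X_\infty)$ and produce, for every small $\epsilon > 0$, some $\delta_j^\epsilon \in \CovSpec(X_j) \cap [\delta-\epsilon, \delta+\epsilon]$ for all sufficiently large $j$; a standard diagonal extraction then yields $\delta_j \to \delta$. The defining property of $\CovSpec$ forces $\tilde X_\infty^{\delta-\epsilon/2}$ to be a strictly larger cover than $\tilde X_\infty^{\delta+\epsilon/2}$, so the quotient $\pi_1(X_\infty,\delta+\epsilon/2,p_\infty)/\pi_1(X_\infty,\delta-\epsilon/2,p_\infty)$ is nontrivial and represented by a loop $\gamma_\infty$ at $p_\infty$ of length essentially $2\delta$. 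Using an $\epsilon_j$-almost isometry $\psi_j\colon X_\infty \to X_j$, transport $\gamma_\infty$ to a nearby closed curve $\gamma_j$ in $X_j$ with $L(\gamma_j) \to L(\gamma_\infty)$. After passing to a subsequence along which both $\tilde X_j^{\delta-\epsilon/2}$ and $\tilde X_j^{\delta+\epsilon/2}$ converge in pointed Gromov--Hausdorff, Theorem~\ref{delta-cover} sandwiches each limit between the corresponding $\delta$-cover of $X_\infty$ and a nearby one; this sandwich combined with the lifting of $\gamma_j$ forces $\gamma_j$ to be essential in $\tilde X_j^{\delta-\epsilon/2}$ yet inessential in $\tilde X_j^{\delta+\epsilon/2}$ for $j$ large, so the monotone family $\delta' \mapsto \tilde X_j^{\delta'}$ must jump somewhere in $[\delta-\epsilon/2,\delta+\epsilon/2]$, producing the desired $\delta_j^\epsilon$.

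For the converse I would argue by contradiction. Suppose $\delta_j \in \CovSpec(X_j)$ with $\delta_j \to \delta > 0$ but $\delta \notin \CovSpec(X_\infty)$. Then there is $\delta' > \delta$ with $\tilde X_\infty^{\delta''} = \tilde X_\infty^{\delta}$ for every $\delta'' \in [\delta,\delta']$. Choose $\epsilon > 0$ with $\delta+\epsilon < \delta'$ and $j$ large so that $\delta_j \in (\delta-\epsilon,\delta+\epsilon)$. Then $\tilde X_j^{\delta-\epsilon}$ properly covers $\tilde X_j^{\delta+\epsilon}$, witnessed by a short loop $\gamma_j$ of length approximately $2\delta_j$ that is essential at scale $\delta-\epsilon$ but inessential at scale $\delta+\epsilon$. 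Subsequentially the covers converge to $Y_\pm$, and by Theorem~\ref{delta-cover} we obtain covering projections $\tilde X_\infty^{\delta-\epsilon} \to Y_- \to \tilde X_\infty^{\delta}$ and $\tilde X_\infty^{\delta+\epsilon} \to Y_+ \to \tilde X_\infty^{\delta'}$. The loops $\gamma_j$ subconverge by Arzela--Ascoli to a loop $\gamma_\infty$ in $X_\infty$ of length at most about $2\delta$ which is essential in $Y_-$ yet trivial in $Y_+$. Pushing this forward under the projections and using $\tilde X_\infty^{\delta} = \tilde X_\infty^{\delta'}$ yields a class in $\pi_1(X_\infty,\delta+\epsilon,p_\infty) \setminus \pi_1(X_\infty,\delta,p_\infty)$, contradicting that these subgroups coincide throughout $[\delta,\delta']$ once $\epsilon$ is chosen smaller than $\delta'-\delta$.

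The main obstacle in both directions is the faithful transfer of the short essential loops that witness jumps of the covering function across Gromov--Hausdorff approximations: one must guarantee that lifts or limits of such loops retain the essential-at-one-scale and inessential-at-another-scale dichotomy. This is precisely where Theorem~\ref{delta-cover} is indispensable, since its sandwich property forbids the covering groups of the limits from collapsing entirely or expanding past the appropriate $\delta$-cover of $X_\infty$, so the witnessing loops cannot be lost or spuriously created in the passage to the limit.
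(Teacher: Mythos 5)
This theorem is stated in the background section and cited from \cite{SorWei3}; the present paper does not reprove it, so there is no ``paper proof'' to compare against directly. What I can assess is whether your sketch, as a stand-alone argument, would carry the burden of proof.

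Your conceptual framework is the right one and does follow the Sormani--Wei program: pass to convergent subsequences of $\delta$-covers via Theorem~\ref{convergence-delta}, use the sandwich property of Theorem~\ref{delta-cover} to locate the limit between $\tilde{X}_\infty^\delta$ and $\tilde{X}_\infty^{\delta_2}$, and detect covering spectrum elements as jump points witnessed by short essential loops transported through almost isometries. The structure of ``forward direction by producing $\delta_j^\epsilon \in \CovSpec(X_j)\cap[\delta-\epsilon,\delta+\epsilon]$ then diagonalizing'' and ``converse by contradiction'' is also how the actual argument proceeds.

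That said, two steps in your sketch conceal real technical content that is not a consequence of the definitions you invoke. First, the assertion that a loop witnessing the distinction between $\tilde{X}^{\delta-\epsilon/2}$ and $\tilde{X}^{\delta+\epsilon/2}$ has length ``essentially $2\delta$'' is a nontrivial lemma in the Sormani--Wei theory: one needs a representative of the relevant deck transformation whose translation length is pinched between roughly $2(\delta-\epsilon/2)$ and $2(\delta+\epsilon/2)$, and this is established by a separate argument relating the covering spectrum to shortest essential (non-$\delta$-homotopic) geodesics. Without such a length control, the Arzel\`a--Ascoli step in the converse direction does not produce a loop in $X_\infty$ that stays within the relevant scale window. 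Second, your key step ``this sandwich combined with the lifting of $\gamma_j$ forces $\gamma_j$ to be essential in $\tilde{X}_j^{\delta-\epsilon/2}$ yet inessential in $\tilde{X}_j^{\delta+\epsilon/2}$'' is asserted, not argued. Theorem~\ref{delta-cover} controls where the \emph{limits} of the $\delta$-covers sit in the tower over $X_\infty$, but by itself it does not transfer the lift-open/lift-closed dichotomy from a loop $\gamma_j$ in $X_j$ to a loop $\gamma_\infty$ in $X_\infty$ or vice versa. That transfer requires the detailed lifting lemmas that underpin the proof of Theorem~\ref{delta-cover} in \cite{SorWei1}, specifically that if $\gamma_j\to\gamma_\infty$ under the GH approximations then the lifting behavior at a fixed scale persists for $j$ large, and that this persistence is uniform enough to survive the $\epsilon$-fudge in the scale. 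You flag this obstacle yourself in your last paragraph, but flagging it is not the same as resolving it, and resolving it is precisely where the substance of the theorem lies. As written, the proposal is a correct outline with the hard steps deferred.
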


\subsection{Review of Arzela-Ascoli Theorems}\label{ArzelaAscoli}

In \cite{Grove-Petersen}, Grove and Petersen prove in detail
the following theorem which they attribute to Gromov.  It is
applied in the work of the second author and Wei \cite{SorWei1}
to prove their
results about the Gromov-Hausdorff limits of $\delta$-covers:

\begin{thm}\label{GH-Arz-Asc} \cite{Grove-Petersen}
Suppose $(X_j,d_j)$ and $(X'_j, d'_j)$ are compact metric spaces 
converging in the Gromov-Hausdorff sense to $(X_\infty, d_\infty)$
and $(X'_\infty, d'_\infty)$ respectively.
If $F_j: X_j \to X'_j$ are equicontinuous, then there is a 
subsequence of the $F_j$ converging to a continuous function
$F_\infty: X_\infty\to X'_\infty$.

More precisely, there exists isometric embeddings as in (\ref{isom-emb})
of the subsequence $\varphi_j: X_j \to Z$,
$\varphi'_j: X'_j\to Z'$,
such that 
\be
d_H^Z(\varphi_j(X_j), \varphi_\infty(X_\infty)) \to 0
\textrm{ and }
d_H^{Z'}(\varphi'_j(X'_j), \varphi'_\infty(X'_\infty)) \to 0
\ee
and for any sequence $p_j\in X_j$ converging to $p\in X_\infty$:
\be
\lim_{j\to\infty} \varphi_j(p_j)=\varphi_\infty(p) \in Z
\ee
one has
\be 
\lim_{j\to\infty}\varphi_j'(F_j(p_j))=\varphi_\infty'(F_\infty(p_\infty)) \in Z'.
\ee
\end{thm}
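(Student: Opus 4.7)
The plan is to reduce the abstract Gromov--Hausdorff Arzela--Ascoli statement to the classical Arzela--Ascoli theorem inside the ambient metric spaces $Z$ and $Z'$ provided by the Gromov Embedding Theorem (Theorem~\ref{Gr-embed}). First I would apply Theorem~\ref{Gr-embed} twice to produce a common compact metric space $Z$ with isometric embeddings $\varphi_j : X_j \to Z$ and $\varphi_\infty : X_\infty \to Z$ satisfying $d_H^Z(\varphi_j(X_j),\varphi_\infty(X_\infty)) \to 0$, and similarly $Z'$ with embeddings $\varphi'_j, \varphi'_\infty$. Define the transported maps $\tilde F_j : \varphi_j(X_j) \to \varphi'_j(X'_j) \subset Z'$ by $\tilde F_j = \varphi'_j \circ F_j \circ \varphi_j^{-1}$. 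Since the $\varphi_j$ and $\varphi'_j$ are isometric embeddings, the equicontinuity of $\{F_j\}$ transfers verbatim to $\{\tilde F_j\}$: there is a common modulus of continuity $\omega$ with $\omega(s) \to 0$ as $s \to 0$ such that $d_{Z'}(\tilde F_j(x),\tilde F_j(y)) \le \omega(d_Z(x,y))$ for all $x,y \in \varphi_j(X_j)$.

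Next I would extract a diagonal subsequence converging on a dense set. Pick a countable dense subset $\{p_k\}_{k=1}^\infty \subset \varphi_\infty(X_\infty)$. For each $k$, the Hausdorff convergence $\varphi_j(X_j) \to \varphi_\infty(X_\infty)$ lets me choose $p_{k,j} \in \varphi_j(X_j)$ with $p_{k,j} \to p_k$ in $Z$. Because $\varphi'_j(X'_j)$ sits in a fixed compact subset of $Z'$ (a tubular neighborhood of the compact set $\varphi'_\infty(X'_\infty)$), the sequence $\tilde F_j(p_{k,j})$ is bounded in $Z'$ for each $k$. A standard diagonal argument produces a subsequence, still denoted $j$, along which $\tilde F_j(p_{k,j}) \to q_k \in \varphi'_\infty(X'_\infty)$ for every $k$, where the closure follows from $d_H^{Z'}(\varphi'_j(X'_j),\varphi'_\infty(X'_\infty)) \to 0$. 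Define $\tilde F_\infty(p_k) := q_k$ on this dense set. Using the common modulus $\omega$ one checks $d_{Z'}(q_k,q_\ell) \le \omega(d_Z(p_k,p_\ell))$, so $\tilde F_\infty$ is uniformly continuous on $\{p_k\}$ and extends uniquely to a continuous map $\tilde F_\infty : \varphi_\infty(X_\infty) \to \varphi'_\infty(X'_\infty)$ satisfying the same modulus $\omega$. Set $F_\infty := (\varphi'_\infty)^{-1} \circ \tilde F_\infty \circ \varphi_\infty$.

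It then remains to verify the convergence statement: whenever $p_j \in X_j$ satisfies $\varphi_j(p_j) \to \varphi_\infty(p)$ in $Z$, one has $\varphi'_j(F_j(p_j)) \to \varphi'_\infty(F_\infty(p))$ in $Z'$. Given $\varepsilon > 0$, pick $p_k$ from the dense set with $d_Z(p_k, \varphi_\infty(p)) < \varepsilon$. A three-term triangle inequality in $Z'$,
\[
d_{Z'}\bigl(\tilde F_j(\varphi_j(p_j)),\, \tilde F_\infty(\varphi_\infty(p))\bigr) \le d_{Z'}\bigl(\tilde F_j(\varphi_j(p_j)),\tilde F_j(p_{k,j})\bigr) + d_{Z'}\bigl(\tilde F_j(p_{k,j}),q_k\bigr) + d_{Z'}\bigl(q_k, \tilde F_\infty(\varphi_\infty(p))\bigr),
\]
bounds the first and third terms by $\omega(2\varepsilon)$ (for $j$ large, using the modulus of continuity and $\varphi_j(p_j)\to\varphi_\infty(p)$, $p_{k,j}\to p_k$), while the middle term is less than $\varepsilon$ for $j$ large by construction. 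Letting $\varepsilon \to 0$ gives the desired convergence.

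The main obstacle is bookkeeping rather than any deep idea: one must carefully manage the two ambient spaces $Z,Z'$ simultaneously, ensure that equicontinuity really does transfer to a single modulus valid for every $\tilde F_j$ (this is automatic because the $\varphi_j,\varphi'_j$ are isometric, but must be stated explicitly to run the Arzela--Ascoli argument), and verify that the limit values $q_k$ lie in the closed set $\varphi'_\infty(X'_\infty)$ so that $F_\infty$ actually lands in $X'_\infty$. Once these bookkeeping points are handled, the proof is essentially the classical proof of Arzela--Ascoli applied to the equicontinuous family $\{\tilde F_j\}$ of maps between the compact ambient Hausdorff-limits in $Z$ and $Z'$.
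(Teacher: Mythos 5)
The paper does not prove Theorem~\ref{GH-Arz-Asc}; it is stated as background and cited to \cite{Grove-Petersen}, so there is no in-paper proof to compare against. Your argument is correct and is essentially the standard proof of the Gromov--Hausdorff Arzela--Ascoli theorem: embed everything into common compact ambient spaces $Z$, $Z'$ via the Gromov Embedding Theorem, conjugate $F_j$ to maps $\tilde F_j$ between the embedded images, transfer equicontinuity across the isometric embeddings, run a diagonal extraction over a countable dense subset of $\varphi_\infty(X_\infty)$ using compactness of $Z'$, observe the limit values land in the closed set $\varphi'_\infty(X'_\infty)$ by Hausdorff convergence, and extend by uniform continuity. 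Two small bookkeeping points are worth tightening: the modulus $\omega$ furnished by equicontinuity need not be continuous, so when you pass to the limit in $d_{Z'}(q_k,q_\ell) \le \omega(d_Z(p_{k,j},p_{\ell,j}))$ you should use that $\omega$ can be chosen non-decreasing and bound by $\omega(s')$ for any $s' > d_Z(p_k,p_\ell)$; and in the final triangle-inequality estimate the bound on the first term is $\omega(3\varepsilon)$ rather than $\omega(2\varepsilon)$, since $d_Z(\varphi_j(p_j),p_{k,j}) \le d_Z(\varphi_j(p_j),\varphi_\infty(p)) + d_Z(\varphi_\infty(p),p_k) + d_Z(p_k,p_{k,j})$ involves three $\varepsilon$'s for $j$ large. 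Neither affects the conclusion.
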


Such a powerful theorem for arbitrary
sequences of equicontinuous functions, $F_j$,
does not hold when the sequences only converge 
in the intrinsic flat sense.  
In \cite{Sormani-AA}, the second author provides
a counter example to the full extension of this
theorem.  Nevertheless she proves
the following theorem which can be applied to sequences
of covering maps of $\delta$-covers of oriented manifolds
that are converging in the intrinsic flat sense:

\begin{thm}\label{Arz-Asc-Unif-Local-Isom} \cite{Sormani-AA}   
Let $M_j=(X_j, d_j, T_j)$
and $M'_j=(X'_j,d'_j,T'_j)$  be integral current spaces 
converging in the intrinsic flat sense to 
$M_\infty=(X_\infty, d_\infty, T_\infty)$
and $M'_\infty=(X'_\infty, d'_\infty, T'_\infty)$ respectively.

Fix $\delta>0$.
Let $F_j: M_j\to M'_j$ be continuous maps which are current preserving isometries
on balls of radius $\delta$ in the sense that:
\be \label{iso-sat} 
\forall x\in X_j, \,\, F_j: \bar{B}(x,\delta) \to \bar{B}(F_j(x),\delta)\textrm{ is an isometry}
\ee
and
\be\label{curpres}
F_{j\#}(T_j\rstr B(x,r))=T'_j\rstr B(F(x),r) \textrm{ for almost every } r\in (0,\delta).
\ee
Then, when $M_\infty\neq {\bf{0}}$, one has $M'_\infty \neq {\bf{0}}$ and
there is a subsequence, also denoted $F_j$, which
converges to a (surjective) local isometry
\be
F_\infty: \bar{X}_\infty \to \bar{X}'_\infty.
\ee
More
specifically, there exists isometric embeddings as in (\ref{isom-emb})
of the subsequence $\varphi_j: X_j\to Z$,
$\varphi'_j: X'_j\to Z'$,
such that 
\be
d_F^Z(\varphi_{j\#} T_j, \varphi_{\infty\#} T_\infty)\to 0 \textrm{ and }
d_F^{Z'}(\varphi'_{j\#} T'_j , \varphi'_{\infty\#} T'_\infty)\to 0
\ee
and for any sequence $p_j\in X_j$ converging to $p\in X_\infty$:
\be\label{point-infty}
\lim_{j\to\infty} \varphi_j(p_j)=\varphi_\infty(p) \in Z
\ee
one has
\be \label{iso-infty}
\lim_{j\to\infty}\varphi_j'(F_j(p_j))=\varphi_\infty'(F_\infty(p_\infty)) \in Z'.
\ee
When $M_\infty={\bf{0}}$ and $F_j$ are surjective, one has
$M'_\infty={\bf{0}}$.
\end{thm}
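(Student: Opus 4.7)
The plan is to use the Sormani-Wenger embedding theorem to realize both sequences in common metric spaces, extract a limit of the maps $F_j$ by a diagonal Arzela-Ascoli argument using the equicontinuity inherited from (\ref{iso-sat}), and then invoke the current-preservation hypothesis (\ref{curpres}) to ensure that the limit map satisfies (\ref{iso-infty}), that $M'_\infty \neq \mathbf{0}$, and that $F_\infty$ is surjective.

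First I would apply Theorem~\ref{converge} twice to obtain separable complete metric spaces $Z$ and $Z'$ with isometric embeddings $\varphi_j : X_j \to Z$ and $\varphi'_j : X'_j \to Z'$ for which $\varphi_{j\#}T_j$ flat converges to $\varphi_{\infty\#}T_\infty$ and $\varphi'_{j\#}T'_j$ flat converges to $\varphi'_{\infty\#}T'_\infty$. Choose a countable dense set $D_\infty \subset X_\infty$ and for each $q \in D_\infty$ select $q_j \in X_j$ so that $\varphi_j(q_j) \to \varphi_\infty(q)$ in $Z$. From (\ref{iso-sat}), for any pair $x,y \in X_j$ with $d_j(x,y) \leq \delta$ one has $d'_j(F_j(x), F_j(y)) = d_j(x,y)$, which gives a common modulus of continuity for the sequence $\{F_j\}$ on pairs closer than $\delta$, and hence equicontinuity on arbitrary pairs after iteration along a chain.

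The crucial observation is that (\ref{curpres}) implies a uniform positive lower bound on the mass of small balls around $F_j(q_j)$ whenever $q$ does not disappear in the limit. By (\ref{balls-converge}) and Lemma~4.1 of \cite{Sormani-AA}, the ball $S(q,r)$ has positive mass for almost every $r < \delta$, and the equality $F_{j\#}(T_j \rstr B(q_j,r)) = T'_j \rstr B(F_j(q_j),r)$ transfers this lower bound to the image side. Consequently the points $\varphi'_j(F_j(q_j))$ remain in a precompact region of $Z'$ (otherwise their images would escape from $\varphi'_\infty(X'_\infty)$, contradicting the preservation of mass in flat limits), and a diagonal extraction over $D_\infty$ produces a subsequence for which $\varphi'_j(F_j(q_j))$ converges in $Z'$ for every $q \in D_\infty$. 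Defining $F_\infty(q)$ to be the unique point of $\bar{X}'_\infty$ whose image under $\varphi'_\infty$ is this limit, and extending by equicontinuity, yields a map $F_\infty : \bar{X}_\infty \to \bar{X}'_\infty$ satisfying (\ref{iso-infty}) for sequences $p_j \to p$ with $p \in D_\infty$, and then on all of $\bar{X}_\infty$ by density together with the common modulus of continuity.

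To conclude that $F_\infty$ is a local isometry on $\delta$-balls and surjective, I would pass (\ref{curpres}) to the limit. For fixed $p_\infty \in X_\infty$ with approximants $p_j$ as in (\ref{point-infty}), a Fubini-and-slicing argument produces radii $r < \delta$ for which both $S(p_j,r)$ and $S(F_j(p_j),r)$ are integral current spaces and flat converge to $S(p_\infty,r)$ and $S(F_\infty(p_\infty),r)$ respectively; the current-preserving equality (\ref{curpres}) then passes to the limit as $F_{\infty\#}(T_\infty \rstr B(p_\infty,r)) = T'_\infty \rstr B(F_\infty(p_\infty),r)$, which forces $M'_\infty \neq \mathbf{0}$ whenever $M_\infty \neq \mathbf{0}$ and also gives surjectivity, since any point of positive density in $\bar{X}'_\infty$ must lie in the image of $F_\infty$ in order for its surrounding mass to be accounted for. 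The hardest step will be this final slicing-and-radius-selection argument: flat convergence of currents does not automatically commute with restriction to balls of a fixed radius, so guaranteeing that (\ref{curpres}) survives the limit requires choosing common good radii simultaneously for the balls in $X_j$ and their images in $X'_j$, in the spirit of the slicing techniques developed in \cite{Sormani-AA}. The degenerate case $M_\infty = \mathbf{0}$ with $F_j$ surjective is handled separately: the $\delta$-isometry together with a Vitali-type covering by $\delta$-balls gives $\mathbf{M}(T'_j) \leq C(\delta,D_0)\,\mathbf{M}(T_j)$, and since $\mathbf{M}(T_j) \to 0$ by hypothesis we conclude $M'_\infty = \mathbf{0}$.
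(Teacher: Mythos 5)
This theorem is quoted from \cite{Sormani-AA} in the paper's background section; no proof is given in this paper, so there is no internal proof to compare against. Evaluating your proposal on its own terms, the bulk of the strategy (embed via Theorem~\ref{converge}, extract pointwise limits on a countable dense set using a mass lower bound transferred through (\ref{curpres}), pass the current-preserving identity to the limit by slicing at good radii) is the right kind of argument, and your honest flag that the slicing-and-radius-selection step is the hard one is well placed.

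However, your treatment of the degenerate case contains a genuine error. You write that ``$\mathbf{M}(T_j)\to 0$ by hypothesis,'' but $M_j\Fto\mathbf{0}$ does \emph{not} imply $\mass(T_j)\to 0$: mass is only lower semicontinuous under intrinsic flat convergence, and cancellation is precisely how a sequence of large-mass currents can flat-converge to zero. Example~\ref{A.19} in this very paper is a counterexample: there $M_j\Fto\mathbf{0}$ while $\mass(T_j)$ stays near $8\pi$. So the proposed Vitali bound $\mass(T'_j)\le C(\delta,D_0)\,\mass(T_j)$, even if it could be justified, yields no decay of $\mass(T'_j)$, and in any case the theorem's hypotheses include no diameter bound $D_0$, so the constant $C(\delta,D_0)$ is not available. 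The correct mechanism for this case must exploit that $F_j$ being a surjective, current-preserving $\delta$-isometry transports the fillings witnessing $d_{\mathcal{F}}(M_j,\mathbf{0})\to 0$ rather than the mass of $T_j$.

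Two secondary soft spots: first, the precompactness of $\varphi'_j(F_j(q_j))$ should be invoked via Theorem~\ref{B-W-BASIC} directly (your mass lower bound from (\ref{curpres}) is exactly hypothesis (\ref{got-this})), rather than the informal ``otherwise escape'' justification. Second, the chaining step used to upgrade $\delta$-scale isometry to equicontinuity on arbitrary pairs implicitly requires $X_j$ to be $\delta$-chain connected; general integral current spaces need not be length spaces, and (\ref{iso-sat}) alone does not give control across components that lie farther than $\delta$ apart. This should be acknowledged or avoided by working locally, since the stated conclusion is only that $F_\infty$ is a local isometry.
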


\subsection{Basic Bolzano-Weierstrass Theorem}
Given  a sequence of compact metric spaces, $X_j\GHto X$ and for $x_j\in X_j$,  Gromov proves that there is a converging subsequence again  denoted by $x_j$, which converges to $x\in X$. This is not true in general  if a sequence converge only in  intrinsic flat sense.  In \cite{Sormani-AA} the second author prove the following Bolzano-Weierstrass theorem.

\begin{thm}\label{B-W-BASIC}\cite{Sormani-AA}
Suppose $M_j=(X_j, d_j, T_j)$ are $m$ dimensional
integral current spaces 
which converge in the intrinsic flat sense to a 
nonzero integral current space 
$M^m_\infty=(X_\infty, d_\infty, T_\infty)$.
Suppose there exists $r_0>0$, a positive function
$h:(0,r_0)\to (0,r_0)$, and a sequence
$p_j \in M_j$ such that for almost every $r\in (0, r_0)$ 
\be \label{got-this}
\liminf_{j\to \infty} d_{\mathcal{F}}(S(p_j,r),0) \ge h(r)>0.
\ee 
Then there exists a subsequence, also denoted $M_j$, such that
$p_{j}$ converges to $p_\infty\in \bar{X}_\infty$.
\end{thm}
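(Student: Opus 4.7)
The plan is to apply the Sormani--Wenger embedding theorem to place all $M_j$ and $M_\infty$ into a common complete separable metric space $Z$, then use flat convergence plus Ambrosio--Kirchheim slicing to force the images $\varphi_j(p_j)$ to approach the compact set $\varphi_\infty(\bar X_\infty) \subset Z$, from which a convergent subsequence follows by compactness of the target.

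First, by Theorem~\ref{converge} fix a complete separable $Z$ and isometric embeddings $\varphi_j : X_j \to Z$ for $j \in \N \cup \{\infty\}$ such that $\varphi_{j\#}T_j \to \varphi_{\infty\#}T_\infty$ in the $Z$-flat norm. Unpacking the definition of the flat norm, choose decompositions $\varphi_{j\#}T_j - \varphi_{\infty\#}T_\infty = U_j + \partial V_j$ in $Z$ with $\mass(U_j) + \mass(V_j) \to 0$. Since precompactness is built into the definition of an integral current space, $K := \varphi_\infty(\bar X_\infty)$ is a compact subset of $Z$.

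The main step is the claim $d_Z(\varphi_j(p_j), K) \to 0$. Suppose not; then after passing to a subsequence there is $r_1 > 0$ with $d_Z(\varphi_j(p_j), K) \geq 2r_1$ for all $j$. Set $f_j(x) := d_Z(x, \varphi_j(p_j))$ and $B_j^r := B_Z(\varphi_j(p_j), r)$. For every $r \in (0, r_1)$ we have $\varphi_{\infty\#}T_\infty \rstr B_j^r = 0$, and Ambrosio--Kirchheim slicing applied to $f_j$ yields, for a.e. $r$,
\[
\mathcal{F}^Z\!\left(\varphi_{j\#}T_j \rstr B_j^r\right) \leq \mass(U_j) + \mass(V_j) + \mass\!\left(\langle V_j, f_j, r\rangle\right).
\]
The coarea inequality $\int_0^{r_1} \mass(\langle V_j, f_j, r\rangle)\, dr \leq \mass(V_j) \to 0$ gives, along a further subsequence, pointwise convergence $\mass(\langle V_j, f_j, r\rangle) \to 0$ at a.e. $r \in (0, r_1)$. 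Choose such an $r$ that also belongs to the full-measure set where hypothesis (\ref{got-this}) holds. Because $\varphi_j$ is an isometric embedding, $d_\mathcal{F}(S(p_j,r),0) \leq \mathcal{F}^Z(\varphi_{j\#}T_j \rstr B_j^r)$, so
\[
0 < h(r) \leq \liminf_{j\to\infty} d_\mathcal{F}(S(p_j,r),0) \leq \liminf_{j\to\infty} \mathcal{F}^Z\!\left(\varphi_{j\#}T_j \rstr B_j^r\right) = 0,
\]
contradicting (\ref{got-this}).

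Having established $d_Z(\varphi_j(p_j), K) \to 0$, compactness of $K$ completes the proof: select nearest-point projections $q_j \in K$ with $d_Z(\varphi_j(p_j), q_j) \to 0$, extract a subsequence $q_j \to q_\infty \in K$, and conclude $\varphi_j(p_j) \to q_\infty$ along the same subsequence. Setting $p_\infty := \varphi_\infty^{-1}(q_\infty) \in \bar X_\infty$ realizes convergence of $p_j$ to $p_\infty$ in the sense of (\ref{convergence-point}). The delicate point is the slicing step: one must select the subsequence for pointwise slice-mass convergence before invoking hypothesis (\ref{got-this}) at a chosen $r$, but this is clean because both conditions hold on full-measure subsets of $(0, r_0)$ whose intersection is itself of full measure.
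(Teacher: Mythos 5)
Your proof is correct and essentially reproduces the argument from the cited Sormani reference: embed everything into a common $Z$ via Theorem~\ref{converge}, show by contradiction (using Ambrosio--Kirchheim slicing of the filling current $V_j$ by the distance function from $\varphi_j(p_j)$) that $\varphi_j(p_j)$ must approach the compact set $\varphi_\infty(\bar X_\infty)$, and then extract a convergent subsequence by compactness of that set. The delicate points — selecting $r$ in the intersection of the two full-measure sets, and noting that $\liminf$ along a sub-subsequence can only increase so hypothesis~(\ref{got-this}) survives the passage to subsequences — are handled correctly.
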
 

\begin{rmrk}\label{BW-rmrk}
Theorem 5.6 in \cite{SorWen2} states that
smooth (or just Lipschitz) convergence implies intrinsic flat 
convergence.  Thus, if a sequence of balls converges
smoothly then we have (\ref{got-this}) and so we
can apply Theorem~\ref{B-W-BASIC}.
\end{rmrk}

\section{Proof of Theorem~\ref{Finite}}

In this section we prove Theorem~\ref{Finite}.

\begin{proof}
We will prove this theorem in four steps:

\vspace{.1cm}
We claim: {\em 
There exists a subsequence of $\tilde{M}_j^\delta$ which converges in the intrinsic flat sense to $M_\infty^\delta$ and each connected component $\mathring{M}^\delta_\infty$ is a covering space
which is an isometry on balls of radius $\delta$ and thus
$\tilde{M}^\delta_\infty$ covers $\mathring{M}^\delta_\infty$.}

\vspace{.1cm}
Let  $F_j$ denote the fundamental domain of the covering  $\pi_j:\tilde{M}_j^{\delta}\to M_j$ based  at $\tilde{x}_j$ in $\tilde{M}_j^{\delta}$.  The covering maps $\pi_j$ send  $F_j$ onto $M_j$ and the union of   $g_j^iF_j$ cover $\tilde{M}_j^{\delta}$,  where $\{g_j^i\}_{i=1}^N\subset G(M_j,\delta)$.  Since $\diam(F_j)\leq 2D_0$, therefore the  
$\diam \tilde{M}_j^{\delta}$  are uniformly  bounded by $2DN$. The boundary of the $F_j$ has measure zero and therefore the  $\vol(\tilde{M}_j^{\delta})$ are uniformly bounded by $NV_0$. Then by Wenger's Compactness Theorem of \cite{Wenger-compactness} [cf. Theorem \ref{thm-Wenger-compactness}], a subsequence of $\tilde{M}_j^\delta$ converges in the intrinsic flat sense to $M_\infty^\delta$. 
  
With the induced metric from $\pi_j$, the $\tilde{M}_j^\delta$ is an oriented Riemannian manifold and $\pi_j$ is an orientation preserving surjective local isometry such that it is isometry on balls of radius $\delta$.  Thus we may apply the second author's Arzela-Ascoli Theorem of \cite{Sormani-AA} (cf. Theorem~\ref{Arz-Asc-Unif-Local-Isom}).  Since  $M_{\infty}\neq 0$, then $M^\delta_\infty\neq 0$.  Furthermore there is a subsequence such that $\pi_j$ converges to a (surjective) local isometry map $\pi_{\infty}$ and so it is distance decreasing. Therefore the restriction map to each connected component $\pi_{\infty}:\mathring{M}^\delta_\infty\to M_{\infty}$ is a covering map which is an isometry on balls of radius less than $\delta$.   Thus we have our first claim.

\vspace{.1cm}
Our second claim: {\em
For any point $p\in M_{\infty}$, the preimage $\pi_\infty^{-1}(p)$ has at least $N$ distinct points in $M^\delta_\infty$.} 

\vspace{.1cm}
Given $p\in M_\infty$, then by Lemma 3.4 in \cite{Sormani-AA} there is a sequence of points $p_j$ in $M_j$  converging 
to $p$ in the sense of  (\ref{convergence-point}). By (\ref{balls-converge}) we know the $S(p_j,r)$ do not converge in the intrinsic flat sense to $\bf{0}$.  Let  $\tilde{p}_j^1,\ldots,\tilde{p}_j^N$ in $\tilde{M}_j^{\delta}$ denote the lifting of $p$. 

If for some $k$, $\tilde{p}_j^k$ is a disappearing sequence, then by (\ref{balls-disappear}) there exist $\delta_1$ such that for almost every $r\in(0,\delta_1)$, 
 \be
S(\tilde{p}_j^k,r) \Fto \bf{0}.
\ee
The maps $\pi_j:B(\tilde{p}_j^k,r)\to B(p_j,r)$ satisfy the assumptions of Theorem~ \ref{Arz-Asc-Unif-Local-Isom} and also  they are surjective. Therefore
 \be
S({p}_j,r) \Fto \bf{0},
\ee
which is a contradiction.  

By the second author's Bolzano-Weierstrass Theorem of
\cite{Sormani-AA} (cf. Theorem~ \ref{B-W-BASIC}), there exist a subsequence of collection 
of points $\{\tilde{p}_j^1,\ldots,\tilde{p}_j^N\}$ which converges to $\{\tilde{p}_1,\ldots,\tilde{p}_N\}$  in $M_{\infty}^{\delta}$.    Since
\be
\tilde{d}_j(\tilde{p}_j^i, \tilde{p}_j^k)\ge \delta
\ee
where $\tilde{d}_j$ are the metrics on $\tilde{M}^\delta_j$,
we have
\be
\tilde{d}_(\tilde{p}_\infty^i, \tilde{p}_\infty^k)
=\lim_{j\to\infty}\tilde{d}_j(\tilde{p}_j^i, \tilde{p}_j^k)
\ge \delta>0
\ee
where $\tilde{d}$ is the metric on $M^\delta_\infty$. 
So we have $N$ distinct points in the set
$\{\tilde{p}_1,\ldots,\tilde{p}_N\}$  in $M_{\infty}^{\delta}$.

 Moreover, since $\pi_j$ converges to $\pi_\infty$, we have 
\be 
\pi_\infty(\tilde{p}_i)= \lim_{j\to\infty} \pi_j(\tilde{p}_j^i))=\lim_{j\to\infty}p_j=p_\infty,
\ee
where the limit has been taken in the sense of Theorem~\ref{Arz-Asc-Unif-Local-Isom}, see (\ref{point-infty}),  (\ref{iso-infty}). So $\pi_\infty^{-1}$ has at least  $N$ points.   Thus we have our second claim.

\vspace{.1cm}
Our third claim:
{\em Every point in $M_\infty$ has exactly $N$ lifting points in $M^\delta_{\infty}$}.

\vspace{.1cm}
Suppose  $\tilde{p}_1,\ldots,\tilde{p}_{N'}$, $N'>N$, in $M^\delta_\infty$ are distinct lifting points for a point  $p$. By Lemma $3.4$ in \cite{Sormani-AA}, there exist  $\tilde{p}_j^1,\ldots,\tilde{p}_j^{N'}$ in $\tilde{M}^\delta_j$  such that $\tilde{p}_j^i$ converges to $\tilde{p}_i$ and 
\be
\lim_{j\to\infty}\tilde{d}_j(\tilde{p}_j^i,\tilde{p}_j^k)=\tilde{d}(\tilde{p}_i, \tilde{p}_k)\quad \text{for}~i,k=1,\ldots,N'
\ee
where $\tilde{d}_j$, $\tilde{d}$ are the metrics on $\tilde{M}^\delta_j$ and $M^\delta_\infty$. 

The map $\pi_j$ is covering map of order $N< N'$, so there are at least two points $\tilde{p}_j^i$, $\tilde{p}_j^k$ such that $\pi_j(\tilde{p}_j^i)\neq\pi_j(\tilde{p}_j^k)$. Moreover the covering maps $\pi_j$ converge to $\pi_\infty$ and so $\pi_j(\tilde{p}_j^1),\ldots,\pi_j(\tilde{p}_j^{N'})$ converges to $p$ and so 
\be 
\lim_{j\to\infty}d_j(\pi_j(\tilde{p}_j^i),\pi_j(\tilde{p}_j^k))=0,
\ee
where $d_j$ is the metric on $M_j$.
For $j$ big enough, $d_j(\pi_j(\tilde{p}_j^i),\pi_j(\tilde{p}_j^k))<\delta$ and since $\pi_j$ is isometry on balls of radius $\delta$ we have 
\be
\tilde{d}(\tilde{p}_i, \tilde{p}_k)=\lim_{j\to\infty}\tilde{d}_j(\tilde{p}_j^i,\tilde{p}_j^k)=\lim_{j\to\infty}d_j(\pi_j(\tilde{p}_j^i),\pi_j(\tilde{p}_j^k))=0.
\ee
which is a contradiction.   Thus we have our third claim.

\vspace{.1cm}
Our fourth claim: {\em
The connected components of ${M}^\delta_\infty$ are isometric
to each other.
Thus $N=N_1\cdot N_2$ where $N_1$ is the number of 
isometric copies of $\mathring{M}^\delta_\infty$
in ${M}^\delta_\infty$ and $N_2$ is the number of points in
the preimage $\pi_\infty^{-1}(p)$ intersected with each connected
component.} 

\vspace{.1cm}
 Let $\dot{M}^\delta_\infty$ and $\ddot{M}^\delta_\infty$  denote two different connected components of $M^\delta_{\infty}$.   Since the
 limit space $M$ is connected, the fundamental domains are
 connected and so two different connected components of 
 $M^\delta_\infty$ must have distinct copies of the fundamental domain.   Thus there exists  $\dot{p}$ and $\ddot{p}$ in $\dot{M}^\delta_\infty$ and $\ddot{M}^\delta_\infty$ respectively such that 
 $\pi_\infty(\dot{p})=\pi_\infty(\ddot{p})$.    
 
There exist $\dot{p}_j$  and $\ddot{p}_j$ in $\tilde{M}^\delta_j$  which converge to $\dot{ p}$ and $\ddot{p}$ respectively.   There exists
an element $g_j$ in the covering group $G(M_j, \delta)$ which
maps $\dot{p}_j$ closest to $\ddot{p}_j$:
\be\label{new-claim-4}
\tilde{d}_j(g_j \dot{p}_j, \ddot{p}_j) \le 
\min\{ \tilde{d}_j(g \dot{p}_j, \ddot{p}_j): \, g\in G(M_j, \delta)\}
=d_j(\pi_j(\dot{p}_j), \pi_j(\ddot{p}_j)). 
\ee

This defines an isometry $ g_j:\tilde{M}^{\delta}_j\to\tilde{M}^{\delta}_j$.   By the second author's Arzela-Ascoli Theorem of \cite{Sormani-AA} (cf Theorem~\ref{Arz-Asc-Unif-Local-Isom}), there is a subsequence such that $ g_j$ converge to an isometry $g_\infty:M^\delta_{\infty}\to M^\delta_{\infty}$.

By the definition of $g_\infty$ and $\tilde{d}_\infty$, by (\ref{new-claim-4})
and by the definition of $\pi_\infty$ and $d_\infty$ we have

\begin{eqnarray}
d_\infty(g_\infty \dot{p}, \ddot{p})
&=&\lim_{j\to\infty} \tilde{d}_j(g_j \dot{p}_j, \ddot{p}_j)\\
&\le&
\lim_{j\to\infty} d_j(\pi_j(\dot{p}_j), \pi_j(\ddot{p}_j)) \\
&=&d_\infty(\pi_\infty(\dot{p}), \pi_\infty(\ddot{p}))=0.
\end{eqnarray}
Thus $g_\infty \dot{p}= \ddot{p}$.  This
can be extended to the connected components:
\be
g_\infty(\dot{M}^\delta_\infty)=\ddot{M}^\delta_\infty.
\ee
This implies our fourth and final claim.

Thus we have completed the proof of
 Theorem~ \ref{Finite}.
 \end{proof}

\section{Examples}

In this section we present detailed proofs of our 
main examples discussed in the introduction: Example~\ref{2-spheres}, Example~\ref{product}, Example~\ref{hHole-appears},
Example~\ref{2D-hole-appears} and Example~\ref{Many more spheres}. 

\subsection{A Hole Disappears in the Limit}

\begin{example}\label{2-spheres} 
We construct a sequence of oriented manifolds, $M_j$, 
diffeomorphic to $\mathbb{RP}^3\times \mathbb{S}^2$ satisfying (\ref{DVm}) which converges to $M_\infty$ such that $\tfrac{\pi}{2}\in\CovSpec(M_j)$ but $\CovSpec(M_\infty)=\emptyset$ because $M_\infty$ is simply connected.   We prove  $\tilde{M}^\delta_j$ converge in the
intrinsic flat sense to a metric space, $M_\infty^\delta$, which is not a covering space for $M_\infty$.  See Figure~\ref{fig-2-spheres}.
\end{example}

\begin{figure}[h]
\begin{center}
\includegraphics[width=0.7\textwidth]{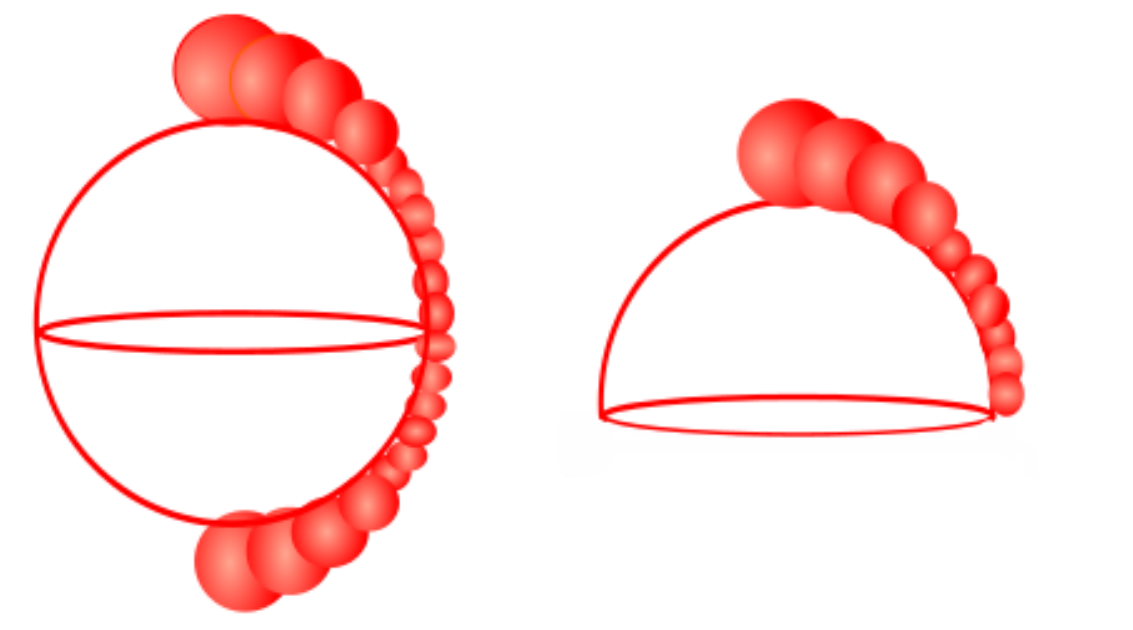}
\caption{$\tilde{M}^\delta_j$ and $M_j$ of Example~\ref{2-spheres}}.
\label{fig-2-spheres}
\end{center}
\end{figure}

\begin{proof}
We define a sequence of Riemannian manifolds $\tilde{M}_j$  as follows, 
\begin{eqnarray*}
(\tilde{M}_j,h_j)=(\mathbb{S}^3\times \mathbb{S}^2,h_j)=\overline{((-\tfrac{\pi}{2},\tfrac{\pi}{2})\times \mathbb{S}^2\times \mathbb{S}^2,dr^2+\cos^2(r)g_{\mathbb{S}^2}+f_j^2(r)g_{\mathbb{S}^2})}
\end{eqnarray*}
where
\begin{align}\label{fjr}
f_j(r)=\left\{\begin{array}{lll}
\tfrac{1}{j}&|r|\in[0,\tfrac{\pi}{4}-\tfrac{1}{j})\\
\text{smoothly monotone}&  |r|\in[\tfrac{\pi}{4}-\tfrac{1}{j},\tfrac{\pi}{4}+\tfrac{1}{j})\\
|\cos(2r)|&  |r|\in[\tfrac{\pi}{4}+\tfrac{1}{j},\tfrac{\pi}{2}). 
\end{array} \right.
\end{align}
By symmetry we have the following isometry
\be
-Id\times Id:=(r,\theta_1,\theta_2)\to(-r,-\theta_1,\theta_2)
\ee
where $(r,\theta_1,\theta_2)\in( -\tfrac{\pi}{2},\tfrac{\pi}{2})\times \mathbb{S}^2\times \mathbb{S}^2$. 

The Riemannian manifolds $M_j$ are defined to be the quotient space 
\be \label{2-spheres-1}
(M_j, \bar{h}_j)=(\mathbb{S}^3\times \mathbb{S}^2,h_j)/\{-Id\times Id,Id\times Id\}
\ee
which is isometric to $(\mathbb{RP}^3\times \mathbb{S}^2,\bar{h}_j)$ where $\bar{h}_j$ is a quotient metric. 

We claim that $\CovSpec(M_j)=\{\pi/2\}$ for every $j$.
Since $\tilde{M}_j$ is diffeomorphic to $\mathbb{S}^3\times \mathbb{S}^2$, it is simply connected.  It is a double cover of $M_j$ and 
\begin{eqnarray*}
\inf\left\{d_{\tilde{M}_j}((r,\theta_1,\theta_2),(-r,-\theta_1,\theta_2)):\, ~(r,\theta_1,\theta_2)\in (-\tfrac{\pi}{2},\tfrac{\pi}{2})\times \mathbb{S}^2\times \mathbb{S}^2\right\}\\ 
\qquad\,\, = \,\, \inf\left\{\,d_{\mathbb{S}^3}((r,\theta_1),(-r,-\theta_1)):\,\, ~(r,\theta_1)\in(-\tfrac{\pi}{2},\tfrac{\pi}{2})\times \mathbb{S}^2\,\right\}\,\,=\,\,\pi
\end{eqnarray*}
because $h_j=g_{\mathbb{S}^3}+f_j^2(r)g_{\mathbb{S}^2}$.  
So by the definition of a $\delta$ cover we have:
\be\label{M-j-delta}
{\tilde{M}_j}^\delta=M_j \textrm{ for }\delta > \tfrac{\pi}{2}.
\textrm{ and }
\tilde{M}_j^\delta=\tilde{M}_j \textrm{ for }\delta \le \tfrac{\pi}{2}.
\ee
By the definition of covering spectrum, we have our claim.

We claim that $\tilde{M}_j$ converges in the Gromov-Hausdorff sense to a metric space 
\be
X=\overline{X}_-\sqcup _{A_-}X_0\sqcup_{A_+} \overline{X}_+
\ee
where
\begin{eqnarray*}
X_-&=&\left((\tfrac{-\pi}{2},\tfrac{-\pi}{4}]\times \mathbb{S}^2\times \mathbb{S}^2,dr^2+\cos^2(r)g_{\mathbb{S}^2}+f_\infty^2(r)g_{\mathbb{S}^2}\right)\\
X_0\,&=&\left([-\tfrac{\pi}{4},\tfrac{\pi}{4}]\times \mathbb{S}^2,dr^2+\cos^2(r)g_{\mathbb{S}^2}\right)
\\
X_+&=&\left([\tfrac{\pi}{4},\tfrac{\pi}{2})\times \mathbb{S}^2\times \mathbb{S}^2,dr^2+\cos^2(r)g_{\mathbb{S}^2}+f_\infty^2(r)g_{\mathbb{S}^2}\right)
\end{eqnarray*}
where $f_\infty(r)=|\cos(2r)|$ and where
\begin{eqnarray}
A_-&=&r^{-1}(\tfrac{-\pi}{4})\textrm{ in $X_0$ and $X_-$} \\
A_+&=&r^{-1}(\tfrac{\pi}{4})\textrm{ in $X_0$ and $X_+$}.
\end{eqnarray}
This follows because $f_j$ converges to $f_\infty$ in $C^1$
and  we have 
\be\label{tilde-I-j}
\epsilon_j \textrm{ almost isometries }\quad
\tilde{I}_j: \tilde{M}_j \to X\quad
\ee  
which take the $r$ level set of $\tilde{M}_j$ to  
the corresponding $r$ levels sets of $X$.  Thus we have our claim,
$
\tilde{M}_j\GHto X.
$ 

We claim the sequence ${M}_j$
converges in the Gromov-Hausdorff sense  to a metric space $Y$, where 
\be
Y=X/\{-Id\times Id,Id\times Id\}
\textrm{ where }(-Id,Id)(r, \theta_1, \theta_2)=(-r, -\theta_1, \theta_2)
\ee
is a fixed point free isometry.
We write 
\be
\pi:X\to Y \textrm{ where } \pi(r, \theta_1, \theta_2)=([(r,  \theta_1)],\theta_2) 
\ee
and $[(r, \theta_1)]$ is the equivalence class containing
both $\pm(r, \theta_1)$.
We have $M_j \GHto Y$ because we have 
\be\label{I-j}
\epsilon_j \textrm{ almost isometries }
{I}_j: {M}_j \to Y
\ee  
which take $([(r, \theta_1)], \theta_2)\in M_j$
to $\pi({\tilde I}_j(r,\theta_1,\theta_2))\in Y$.  

By Gromov's Embedding Theorem there is a compact metric space $Z$ and isometric embeddings $\varphi_j: M_j \to Z$ such that
\be
\lim_{j\to \infty} d_H^Z(\varphi_j(M_j), \varphi_\infty(Y))=0.
\ee
Then a subsequence of $M_j$, we denote it again by $j$,  as integral current spaces  has an intrinsic flat
limit $M_{\infty}$ where $M_{\infty}=\set(T_\infty)\subset Y$
(c.f. Theorem~\ref{GH-to-flat}).

We claim 
\be\label{here-here}
M_\infty=r^{-1}[\tfrac{\pi}{4},\tfrac{\pi}{2}]=\pi(\bar{X}_+)\subset Y.\,\,
\ee
We know by Theorem~\ref{GH-to-flat} that there is a subsequence
$M_j \Fto M_\infty\subset Y$.  We will show any subsequence converges as in (\ref{here-here}).
This will be proven by examining which points disappear in the
limit using \cite{Sormani-AA} Lemma 4.1 (cf. (\ref{balls-converge})
and (\ref{balls-disappear}) ).

For a point $p$ which is in the interior of $\pi(X_0)$, $\mathcal{H}^5(B(p_j,r))\to 0$ and so $S(p_j,r)\Fto \bf{0}$.  Thus we do not
have (\ref{balls-converge}) and so
$p_j$ is a disappearing sequence and $p\notin M_\infty$. 

For any point $p$ in $Y$ there exist $p_j$ in $M_j$ such that $p_j \to p$.
Suppose $p$ lives in the interior of $\pi(\bar{X}_+)$, $p\in W$. 
Since $\pi(X_+)$ is isometric to the $W$
there exists $r>0$ small enough that ${B(p_j,r)}$ with the restricted metric converges smoothly to ${B(p,r)}$.    So it converges in the intrinsic flat sense, i.e. $S(p_j,r)\Fto S(p,r)$ [see Remark~\ref{BW-rmrk}]. Therefore $p_j$ is not a disappearing sequence and $p\in M_\infty$.   

If $p$ is on the boundary of $\pi(X_+)=W$, then it has a positive 
density:
\be
\liminf_{r\to 0} \frac{\mathcal{H}^5(B(p,r)\cap\pi(X_+))}{r^5}>0.
\ee
So $p\in \set(T_\infty)=M_\infty$.

Therefore we have our claim that
$M_\infty$ is isometric to $\pi(\bar{X}_+)$.    
Since all subsequences converge to the same limit space, $M_j \Fto M_\infty$.   In fact we have shown that $M_\infty =\bar{W}$ where
\be\label{p-W}
\forall p\in W \,\,\exists p_j \to p \textrm{ and } r_p>0\textrm{ s.t. } 
B(p_j,r)\to  B(p_\infty,r)) \textrm{ smoothly},
\ee
and
\be\label{p-W-0}
\forall p\in Y\setminus M_\infty 
\,\,\exists p_j \to p \textrm{ and } r_p>0\textrm{ s.t. } 
\mathcal{H}^5(B(p_j,r)) \to 0.
\ee

We claim that $M_\infty$ is simply connected.
Since it is isometric to $\bar{X}_+$, any closed loop has the form: $C(t)=(r(t),\theta_1(t),\theta_2(t))$
where $\theta_i(t)\in \mathbb{S}^2$.  Since $\mathbb{S}^2$ is simply connected,
$C(t)$ is homotopic to 
$
C_1(t)=(r(t),\theta_1(t),\theta_2(0))
$
and homotopic to
$
C_2(t)=(r(t),\theta_1(0),\theta_2(0))
$
which is homotopic to $C_3(t)=(r(0),\theta_1(0),\theta_2(0))$.Thus $\CovSpec(M_\infty)=\emptyset$.

 Since $\tilde{M}_j^\delta\GHto X$, we know a subsequence
of $\tilde{M}_j^\delta \Fto M_\infty^\delta\subset X$.   Imitating the
argument to examine which points disappear, we can show that
\be
M_\infty^\delta=\bar{X}_+\cup \bar{X}_- \subset X
\ee
with the restricted metric.   In fact we can show
$M_\infty^\delta=\bar{W}^\delta$ where 
\be\label{p-W-delta}
\forall p\in W^\delta \,\,\exists p_j \to p \textrm{ and } r_p>0\textrm{ s.t. } 
B(p_j,r)\to B(p_\infty,r))\textrm{ smoothly},
\ee
and
\be\label{p-W-delta-0}
\forall p\in Y\setminus M_\infty 
\,\,\exists p_j \to p \textrm{ and } r_p>0\textrm{ s.t. } 
\mathcal{H}^5(B(p_j,r)) \to 0.
\ee

In particular $M_\infty^\delta$ is not
a connected metric space.  In fact each connected component
of $M_\infty^\delta$ is isometric to $M_\infty$.
\end{proof}

\subsection{Covering Spectra of Products}

\begin{example}\label{product}
We produce a sequence of oriented manifolds, $K_j$,
satisfying (\ref{DVm}) whose $\delta$-cover, $\tilde{K}_j^\delta$, is a finite cover of order $N$.  We prove that 
$\tilde{K}_j^\delta$ converges in the intrinsic flat sense to a metric space ${K}_\infty^\delta$
with $N_1$ connected components and each connected component is a finite cover of $K_\infty$ of order $N_2$ where $N=N_1\cdot N_2$.  In fact $K_j=(M_j,\bar{h}_j)\times (\mathbb{RP}^3,g_{\mathbb{RP}^3})$ 
with the isometric product metric tensor, $\bar{h}_j+g_{\mathbb{RP}^3}$,
where $(M_j,\bar{h}_j)$  is as in (\ref{2-spheres-1}) of Example~\ref{2-spheres} 
and $g_{\mathbb{RP}^3}$ is the standard metric on ${\mathbb{RP}^3}$. 
\end{example}

Before we prove this example, we prove the following theorem:

\begin{thm}\label{thm-product}
If a geodesic metric space
$X=X_1\times X_2$ is endowed with the isometric
product metric 
\be\label{thm-product-1}
d((x_1,x_2),(y_1, y_2))= \sqrt{d_1(x_1,y_1)^2+d_2(x_2,y_2)^2}
\ee
where $d_i$ is the metric on the geodesic metric space $X_i$,
then
\be\label{thm-product-2}
\tilde{X}^\delta= \tilde{X}_1^\delta \times \tilde{X}_2^\delta
\ee
and thus
\be
\CovSpec(X)=\CovSpec(X_1) \cup \CovSpec(X_2).
\ee
\end{thm}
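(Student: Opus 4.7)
The plan is to reduce both assertions to a product identification at the level of the $\delta$-fundamental subgroups and then invoke Spanier's classification of covering spaces. I will show that under the natural isomorphism $\Phi : \pi_1(X_1 \times X_2, (x_1,x_2)) \to \pi_1(X_1, x_1) \times \pi_1(X_2, x_2)$ induced by the coordinate projections $p_i : X \to X_i$, one has
\begin{equation*}
\Phi\bigl(\pi_1(X, \delta, (x_1,x_2))\bigr) = \pi_1(X_1, \delta, x_1) \times \pi_1(X_2, \delta, x_2).
\end{equation*}
Given this identity, (\ref{thm-product-2}) follows at once from Spanier's classification, since the connected cover $\tilde{X}_1^\delta \times \tilde{X}_2^\delta \to X_1 \times X_2$ has covering group equal to the product of the two factor covering groups.

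For the inclusion $\subseteq$, I would exploit the fact that (\ref{thm-product-1}) makes each projection $p_i$ a $1$-Lipschitz map. A generator $[\alpha^{-1}\beta\alpha] \in \pi_1(X, \delta, (x_1,x_2))$ with $\beta$ a loop in some ball $B_X(q,\delta)$ projects to $[p_i(\alpha)^{-1}\, p_i(\beta)\, p_i(\alpha)]$, where $p_i(\beta)$ is a loop in $B_{X_i}(p_i(q),\delta)$ and $p_i(\alpha)$ is a path from $x_i$ to $p_i(\beta(0))$. Hence each coordinate of $\Phi[\alpha^{-1}\beta\alpha]$ is a generator of $\pi_1(X_i, \delta, x_i)$. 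For the reverse inclusion $\supseteq$, given a generator $[\alpha_1^{-1}\beta_1\alpha_1]$ of $\pi_1(X_1,\delta,x_1)$ with $\beta_1$ lying in $B_{X_1}(q_1,\delta)$, I would consider the slice loop $\tilde\beta(t)=(\beta_1(t), x_2)$ and slice path $\tilde\alpha(t)=(\alpha_1(t), x_2)$; the product metric (\ref{thm-product-1}) gives
\begin{equation*}
d\bigl((\beta_1(t), x_2),(q_1, x_2)\bigr) = d_1(\beta_1(t), q_1) < \delta,
\end{equation*}
so $\tilde\beta$ lies in $B_X((q_1,x_2),\delta)$ and $\Phi^{-1}\bigl([\alpha_1^{-1}\beta_1\alpha_1], [e]\bigr) \in \pi_1(X,\delta,(x_1,x_2))$; the symmetric construction handles generators coming from $\pi_1(X_2,\delta,x_2)$.

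The covering spectrum equality then follows by double containment using (\ref{thm-product-2}). If $\delta \in \CovSpec(X_i)$ for some $i$, then for every $\delta' > \delta$ the cover $\tilde{X}_i^\delta$ strictly covers $\tilde{X}_i^{\delta'}$, hence the product covers $\tilde X^\delta$ and $\tilde X^{\delta'}$ disagree and $\delta \in \CovSpec(X)$. Conversely, if $\delta$ lies in neither $\CovSpec(X_i)$, I would pick $\delta_i > \delta$ with $\tilde{X}_i^\delta = \tilde{X}_i^{\delta_i}$; the monotonicity of $\delta$-covers recalled from \cite{SorWei1} then forces $\tilde{X}_i^\delta = \tilde{X}_i^{\delta'}$ for $\delta' = \min(\delta_1,\delta_2) > \delta$, so $\tilde X^\delta = \tilde X^{\delta'}$ and $\delta \notin \CovSpec(X)$.

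The main subtlety I anticipate is that the metric ball $B_X((x_1,x_2),\delta)$ is strictly smaller than the product $B_{X_1}(x_1,\delta) \times B_{X_2}(x_2,\delta)$, so the open cover $\mathcal{U}_\delta$ of $X$ is not literally the product of the two factor coverings. The argument sidesteps this via an asymmetric pairing: $1$-Lipschitz projections yield $\subseteq$, while slice balls of the form $B_{X_i}(q,\delta) \times \{x_j\} \subset B_X((q,x_j),\delta)$ yield $\supseteq$. Once the fundamental-subgroup identity is established, the rest of the argument is purely formal covering-space bookkeeping.
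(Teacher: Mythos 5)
Your proof is correct and uses essentially the same two ingredients as the paper's proof: the observation that $B_X((x_1,x_2),\delta)\subset B_{X_1}(x_1,\delta)\times B_{X_2}(x_2,\delta)$ (equivalently, that the projections $p_i$ are $1$-Lipschitz) and the slice loops $t\mapsto(\beta_1(t),x_2)$ sitting inside a $\delta$-ball of $X$; you simply package them as the subgroup identity $\Phi\bigl(\pi_1(X,\delta)\bigr)=\pi_1(X_1,\delta)\times\pi_1(X_2,\delta)$ and invoke Spanier's classification, whereas the paper argues via which loops lift open or closed. Your derivation of the covering-spectrum equality, which explicitly uses monotonicity of $\delta$-covers to pass from the paper's pointwise equivalence ``$\tilde X^\delta\neq\tilde X^{\delta'}$ iff $\tilde X_1^\delta\neq\tilde X_1^{\delta'}$ or $\tilde X_2^\delta\neq\tilde X_2^{\delta'}$'' to the universally quantified statement defining $\CovSpec$, fills in a step the paper leaves to the reader.
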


\begin{proof}
Let $\pi_i: \tilde{X}_i^\delta \to X_i$.   Then
\be
\pi: \tilde{X}_1^\delta \times \tilde{X}_2^\delta \to X
\textrm{ defined by } \pi(x_1,x_2)=(\pi_1(x_1), \pi_2(x_2))
\ee
is clearly an isometry on balls of radius $\delta$ because
\be
B((x_1,x_2), \delta) \subset B(x_1,\delta)\times B(x_2,\delta) \subset 
\tilde{X}_1^\delta \times \tilde{X}_2^\delta.
\ee

Now recall in Definition~\ref{defn-delta-cover}
that $\tilde{X}^\delta$ is the covering space of $X$
with covering group $\pi_1(X, \mathcal{U}, p)$ generated
by homotopy classes of closed curves $C$ of the form
$\alpha^{-1}\circ \beta \circ \alpha$ where $\beta$ lies in a ball of radius $\delta$ in $X$.    Since $\pi$ is an isometry on balls of
radius $\delta$, any such curve $C$ in $X$, lifts as a closed 
loop to  $\tilde{X}_1^\delta \times \tilde{X}_2^\delta$.   

To prove
that $\tilde{X}_1^\delta \times \tilde{X}_2^\delta=\tilde{X}^\delta$
we need only show that any other closed curve, not generated as
above, lifts as an open curve to 
$\tilde{X}_1^\delta \times \tilde{X}_2^\delta$.   In other words,
if a closed curve $\gamma$ lifts as an open curve to
$\tilde{X}^\delta$ we need only show that 
$\gamma$ lifts as an open curve to $\tilde{X}_1^\delta \times \tilde{X}_2^\delta$.

Suppose a closed curve $\gamma=(\gamma_1, \gamma_2)$ in
$X$ lifts as an open curve $\tilde{\gamma}$ in $\tilde{X}^\delta$.
We {\em assume on the contrary} that $\gamma$ lifts
to a closed curve $\bar{\gamma}$ in $\tilde{X}_1^\delta \times \tilde{X}_2^\delta$.   Then $\bar{\gamma}=(\bar{\gamma}_1, \bar{\gamma}_2)$ defines a pair of closed curves
$\bar{\gamma}_i$ in $\tilde{X}_i^\delta$ which are lifts of $\gamma_i$
in $X_i$.   

Thus each ${\gamma}_i$ is homotopic in $X_i$ to a product of 
closed curves $C_{i,1}\circ C_{i,2}\circ \cdots \circ C_{i,N}$ in $X_i$ of the form 
\be
C_{i,j}=\alpha_{i,j}^{-1}\circ \beta_{i,j} \circ \alpha_{i,j}
\ee
 where $\beta_{i,j} \subset B(x_{i,j},\delta)\subset X_i$.
 
Let $\sigma_1(t)=(\gamma_1(t), \gamma_2(0))$
and $\sigma_2(t)=(\gamma_1(1), \gamma_2(t))$.   
Then $\gamma(t)$ is homotopic to $\sigma_2\circ \sigma_1$.
Furthermore 
${\sigma}_1$ is homotopic in $X$ to a product of 
closed curves of the form $\alpha^{-1} \circ \beta \circ \alpha$,
where
\be
\alpha(t)=(\alpha_{i,j}(t), \gamma_2(0)) \textrm{ and }
\beta(t)=(\beta_{i,j}(t), \gamma_2(0))
\ee
so that
\be
\beta(t) \subset B(x_{i,j},\delta)\times \{\gamma_2(0)\}
\subset B((x_{i,j},\gamma_2(0)), \delta).
\ee
Thus $\sigma_1$ lifts as a closed curve to $\tilde{X}^\delta$.
Similarly $\sigma_2$ lifts as a closed curve to $\tilde{X}^\delta$.  
Since $\gamma=(\gamma_1,\gamma_2)$ is homotopic to
$\sigma_2\circ \sigma_1$, we see that $\gamma$ lifts
as a closed curve to $\tilde{X}^\delta$.   This is a contradiction.
Thus we have proven (\ref{thm-product-1}).   

Next observe that (\ref{thm-product-1}) implies that
$\tilde{X}^\delta\neq \tilde{X}^{\delta'}$ iff
\be
\tilde{X_1}^\delta\neq \tilde{X_1}^{\delta'}
\textrm{ or } \tilde{X_2}^\delta\neq \tilde{X_2}^{\delta'}.
\ee
So we have (\ref{thm-product-2}) by the definition of
covering spectrum.
\end{proof}

We now prove Example~\ref{product}:

\begin{proof}
Let 
$K_j=(M_j,\bar{h}_j)\times (\mathbb{RP}^3,g_{\mathbb{RP}^3})$ 
with the isometric product metric tensor, $\bar{h}_j+g_{\mathbb{RP}^3}$,
where $(M_j,\bar{h}_j)$  is as in (\ref{2-spheres-1}) of Example~\ref{2-spheres} 
and $(\mathbb{RP}^3,g_{\mathbb{RP}^3})$ is the standard
real projective space.
Note that since we are using the construction in Example~\ref{2-spheres} all notations will be the same as in that example. 

Let  $\tilde{K}_j=(\tilde{M}_j,{h}_j)\times (\mathbb{S}^3,g_{\mathbb{S}^3})$ where $g_{\mathbb{S}^3}$ is the standard metric on ${\mathbb{S}^3}$.  We claim 
\be
\tilde{K}_j^\delta=\tilde{K}_j 
\textrm{ for } 
\delta\le \pi/2
\textrm{ and } 
\tilde{K}_j^\delta=K_j 
\textrm{ for }
\delta>\pi/2.
\ee
This follows from Example~\ref{ex-RP-delta},
 Example~\ref{2-spheres} and
Theorem~\ref{thm-product}.
 
We claim that 
\begin{eqnarray}
K_j=M_j\times \mathbb{RP}^3\GHto Y\times \mathbb{RP}^3,\\
\tilde{K}_j=\tilde{M}_j\times \mathbb{S}^3\GHto X\times \mathbb{S}^3.
\end{eqnarray}
where $M_j \GHto X$ and $\tilde{M}_j \GHto Y$ 
as in Example~\ref{2-spheres}.

The first convergence can be seen using {$\epsilon_j$}-almost isometry $(I_j,Id)$ where $I_j: M_j \to Y$ is
the $\epsilon_j$-almost isometry defined in (\ref{I-j})
in the proof of Example~\ref{2-spheres} and $Id: \mathbb{RP}^3\to\mathbb{RP}^3$ is the identity map. The second convergence can be seen using {$\epsilon_j$}-almost isometry $(\tilde{I}_j,Id)$ where $\tilde{I}_j: \tilde{M}_j \to X$ is
the $\epsilon_j$-almost isometry defined in (\ref{tilde-I-j})
in the proof of Example~\ref{2-spheres} and 
$Id: \mathbb{S}^3\to \mathbb{S}^3$ is the identity map. 

We claim that 
\be
K_j \Fto K_\infty=M_\infty\times \mathbb{RP}^3
\ee
where $M_j \Fto M_\infty$ as in Example~\ref{2-spheres}.

To prove our claim, we first observe that by Theorem~\ref{GH-to-flat} there is a subsequence of $K_j$ which converges
to some $K_\infty \subset Y\times \mathbb{RP}^3$.
For every point $(p,q)\in K_\infty$ there exists  $(p_j,q_j)\in K_{j}$  which converges to $(p,q)$. 

Recall the set $W$ such that $M_\infty=\bar{W}$ described
in (\ref{p-W}) of Example~\ref{2-spheres}.  For 
$p\in W$, $B(p_j,r)$ converges smoothly to $B(p,r)$.   Thus the ball of radius $r$ centered at  $(p_j,q_j)$, $B((p_j,q_j),r)$, converges smoothly to $B((p,q),r)$.  So the points $(p_j,q_j)$
do not disappear as in (\ref{balls-disappear}) and 
so $(p,q)\in K_\infty$. 

If $p$ is in $Y\setminus M_\infty$, then we showed in 
(\ref{p-W-0}) of Example~\ref{2-spheres}  that
for small enough $r>0$, $\mathcal{H}^5(B(p_j,r)) \to 0$
and so 
\be
\mathcal{H}^8(B((p,q),r))
\le \mathcal{H}^8(B((p),r)\times \mathbb{RP}^3)\to 0
\ee 
Thus $(p,q)$ is not in $K_\infty$. 

If $p$ is $M_\infty$, which includes
$p\in \partial W$, then it has positive density, so
there exists $C_1>0$ such that 
\begin{eqnarray*}
\liminf_{r\to 0} \frac{\mathcal{H}^8(B((p,q),r)}{r^8}
&\ge &\liminf_{r\to 0} \frac{\mathcal{H}^8
\left(B\left(p, r/\sqrt{2}\right)\times B\left(q,r/\sqrt{2}\right)\right)}{r^8}\\
&\ge& \liminf_{r\to 0} C_1 \frac{\mathcal{H}^5
\left(B\left(p, r/\sqrt{2}\right)\right)\mathcal{H}^3\left(B\left(q,r/\sqrt{2}\right)\right)}{r^8}\\
&=& \liminf_{r\to 0} C_1\omega_3 2^{3/2} 
\frac{\mathcal{H}^5\left(B\left(p,r/\sqrt{2}\right)\right)}{r^5} 
>0,
\end{eqnarray*}
where 
\be \label{99}
\omega_3=\vol(B(0,1)\subset \E^3)=
\liminf_{r\to 0} \frac{\mathcal{H}^3\left(B(q,r)\right)}{r^3}
\textrm{ for } q\in \mathbb{RP}^3.
\ee
So $(p,q)\in K_\infty$. 

Therefore $K_\infty=M_\infty\times\mathbb{RP}^3$. Since all the subsequences $K_j$ converge to this same $K_\infty$, 
we have $K_j\Fto K_\infty$.  

We claim that {\em 
$\tilde{K}_j$ converges in the intrinsic flat sense 
to a  pair of disjoint $M_\infty \times \mathbb{S}^3$ where $M_j \Fto M_\infty$ as in Example~\ref{2-spheres}.}

To prove our claim, we first observe that by Theorem~\ref{GH-to-flat} there is a subsequence of $\tilde{K}_j$ which converges
to some $\tilde{K}_\infty \subset X\times \mathbb{S}^3$.
For every point $(p,q)\in \tilde{K}_\infty$ there exists  
$(p_j,q_j)\in \tilde{K}_{j}$  which converges to $(p,q)$. 

Recall the set $W^\delta$ such that $M_\infty^\delta=\bar{W^\delta}$ described
in (\ref{p-W-delta}) of Example~\ref{2-spheres}.  For 
$p\in W^\delta$, $B(p_j,r)$ converges smoothly to $B(p,r)$.   Thus the ball of radius $r$ centered at  $(p_j,q_j)$, $B((p_j,q_j),r)$, converges smoothly to $B((p,q),r)$.  So the points $(p_j,q_j)$
do not disappear as in (\ref{balls-disappear}) and 
so $(p,q)\in \tilde{K}_\infty$. 

If $p$ is in $X\setminus M_\infty^\delta$, then we showed in 
(\ref{p-W-delta-0}) of Example~\ref{2-spheres}  that
for small enough $r>0$, $\mathcal{H}^5(B(p_j,r)) \to 0$
and so
\be
\mathcal{H}^8(B((p,q),r))
\le \mathcal{H}^8(B((p),r)\times \mathbb{RP}^3)\to 0
\ee 
Thus $(p,q)$ is not in $\tilde{K}_\infty$. 

If $p$ is in the closure of $M_\infty^\delta$, then it has positive density.   So exactly as in the density argument above
(\ref{99}) we have
\be
\liminf_{r\to 0} \frac{\mathcal{H}^8(B((p,q),r))}{r^8} >0
\ee
because
\be
\liminf_{r\to 0}\frac{\mathcal{H}^5(B(q,r/\sqrt{2}))}{r^5} =\omega_3
\textrm{ for } q\in \mathbb{S}^3.
\ee
So $(p,q)\in K^\delta_\infty$. 

Therefore $\tilde{K}_\infty=M_\infty^\delta\times\mathbb{S}^3$. Since all the subsequences $\tilde{K}_j$ converge to $\tilde{K}_\infty$ we have $\tilde{K}_j\Fto \tilde{K}_\infty$.  

Each $M_\infty\times \mathbb{S}^3$ is a covering space of order 2.  In this Example $N=4$, and $N_1=N_2=2$.
\end{proof}

\subsection{A Hole Appears in the Limit Space}

\begin{example}\label{hHole-appears}
We produce a sequence of four dimensional oriented 
simply connected manifolds $M_j$ 
satisfying (\ref{DVm}) which converge in the
intrinsic flat sense to $M_\infty$ which is diffeomorphic to
$\mathbb{S}^1\times \mathbb{S}^3$.   In particular
$M_j$
which have regions $U_j\subset M_j$ isometric to $D^2\times \mathbb{S}^1_{1/j}$ such that $M_j \setminus U_j$
are not simply connected.   The volumes of the regions $U_j$
converge to $0$ in such a way that they disappear under intrinsic flat convergence forming a hole in the limit space.  In this example $\CovSpec(M_j)=\emptyset$ but $\CovSpec(M_\infty)=\{\pi/2\}$ so we see that
$\CovSpec(M_j)\cup \{0\}$ does not converge to
$\CovSpec(M_\infty)\cup\{0\}$.   See Figure~\ref{hole-appears}.
\end{example}

\begin{figure}[h]
\begin{center}
\includegraphics[width=0.7\textwidth]{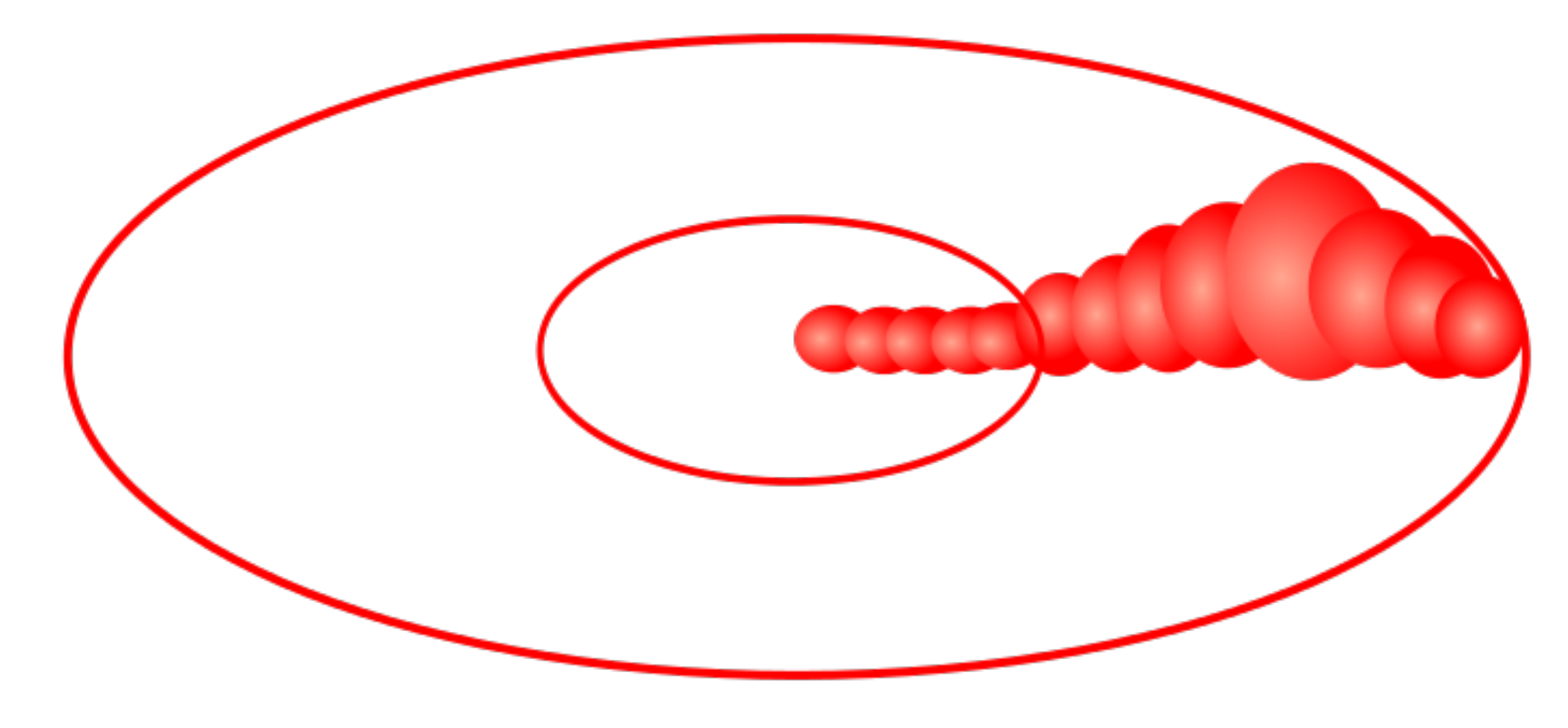}
\caption{Example~\ref{hHole-appears}}
\label{hole-appears}
\end{center}
\end{figure}

\begin{proof}
We consider $[0,1)\times \mathbb{S}^1\times \mathbb{S}^2$
with the Riemannian metric $g_j=dr^2+r^2 d\theta^2+h_j^2(r) g_{\mathbb{S}^2}$ where  
\begin{align}\label{hjr}
h_j(r)=\left\{\begin{array}{lll}
\tfrac{1}{j}&r\in[0,\tfrac{1}{2}-\tfrac{1}{j}]\\
\text{smoothly monotone}& r\in[ \tfrac{1}{2}-\tfrac{1}{j}, \tfrac{1}{2}+\tfrac{1}{j}]\\
|\cos((2r-\tfrac{3}{2})\pi)|&r\in[ \tfrac{1}{2}+\tfrac{1}{j}, 1).
\end{array} \right.
\end{align}

The Riemannian manifold $M_j=\overline{[0,1)\times \mathbb{S}^1\times \mathbb{S}^2}$ is simply connected and so its $\delta$-cover is the same as $M_j$.  
So $\CovSpec(M_j)=\emptyset$.

We claim
$M_j$ converge in Gromov-Hausdorff topology to a metric space 
\be
Y=\overline{D^2_{\frac{1}{2}}}\sqcup_{\mathbb{S}^1_{\frac{1}{2}}} 
\overline{(A_{\frac{1}{2},1}\times \mathbb{S}^2,g)}
\ee 
where   $g=dr^2+r^2d\theta^2+h^2(r)g_{\mathbb{S}^2}$ and 
\be
h(r)=|\cos((2r-\tfrac{3}{2})\pi)| \textrm{ for } r\in [\frac{1}{2},1).
\ee   
This can be seen by
observing that the maps from $M_j$ to $Y$
which preserve the level sets of $r\in[0,1)$
are $\epsilon_j$-almost isometries with $\epsilon_j\to 0$.   The
fact that the maps are not invertible is not a problem.   

We claim that $M_j$ converge in intrinsic flat sense to $M_\infty$, where 
\be
M_\infty=\overline{A_{\tfrac{1}{2},1}\times \mathbb{S}^2}. 
\ee
where   $g=dr^2+r^2d\theta^2+h^2(r)g_{\mathbb{S}^2}$.   Here $r\in [1/2,1]$.  This is proven by examining which points disappear in the
limit using \cite{Sormani-AA} Lemma 4.1 (cf. (\ref{balls-converge})
and (\ref{balls-disappear}) ).
The sets $U_j=r^{-1}[0,1/2)\subset M_j$ clearly
have $\mathcal{H}^4(U_j)\to 0$; so the balls in those sets
cannot satisfy (\ref{balls-converge}).  Thus points in $U_j$
must disappear and 
\be
M_\infty \subset r^{-1}[1/2,1]\subset Y 
\ee
Meanwhile balls in the sets $r^{-1}(1/2,1]=M_j\setminus \bar{U}_j$
do not disappear because they are eventually isometric to the corresponding balls in $M_\infty$.  Finally the points in the
level set $r^{-1}(1/2)$ are verified to have positive density and
thus lie in $M_\infty$.   Thus 
\be
M_\infty = r^{-1}[1/2,1]= \overline{A_{\tfrac{1}{2},1}\times \mathbb{S}^2}\subset Y
\ee

Observe that $M_\infty$ is not simply connected.  It has a 
shortest noncontractible geodesic of length $\pi$ in the
level set $r^{-1}(1/2)$.   Thus 
\be
\frac{\pi}{2} \in\CovSpec(M_\infty).
\ee
In fact $M_\infty$ is diffeomorphic to $\mathbb{S}^1\times \mathbb{S}^3$ because $h(1/2)=h(1)=0$.
\end{proof}

\subsection{Cancellation to a Limit which is a Torus}

Before we present Example~\ref{2D-hole-appears},
we recall Example A.19 of
\cite{SorWen2} in which
 a sequence of manifolds converges in the intrinsic
flat sense to the $\bf{0}$ integral current space 
due to cancellation.

\begin{example}\cite{SorWen2}\label{A.19}
Let $M_j=\partial W_j$ where
\be
W_j=(\mathbb{S}^2\backslash U_j) \times [0,h_j]
\ee
where 
\be
h_j<
\min\left\{\frac{1}{L(\partial U_j)},\frac{1}{j}\right\}.
\ee
and
\be
U_j=\bigcup_{i=1}^{N_j}B(p_i,r_j)
\ee
is the union of balls in $\mathbb{S}^2$ about an increasingly dense
collection of points
such that 
$d(p_i,p_k)>3/j$ and 
\be
\mathbb{S}^2\subset \bigcup_{i=1}^{N_j} B(p_i,{10}/{j})
\ee 
and where $r_j<1/j$. 

So $M_j$ is two copies of $\mathbb{S}^2\backslash U_j$ with opposite orientation glued together by cylinders of the form 
$\partial B(p_i,r_j)\times[0,h_j]$.

Then $M_j \GHto \mathbb{S}^2$ and $M_j \Fto \bf{0}$.
\end{example}

Intuitively Example~\ref{A.19} has $M_j \Fto \bf{0}$ because
sheets of opposite orientation are coming together and causing
cancellation everywhere.  The proof in \cite{SorWen2} provides an explicit
sequence of common metric spaces, $Z_j$, isometric embeddings,
$\varphi_j: M_j \to Z_j$, and integral currents, $B_j$, such
that $\partial B_j=\varphi_{j\#}\lbrack M_j \rbrack$ with
$\mass(B_j)\to 0$.

Our next example does not converge to the $\bf{0}$ integral
current space because we only cause cancellation near the
poles of the spheres:

\begin{example}\label{2D-hole-appears} 
We construct a sequence of oriented two dimensional manifolds
$M_j$ satisfying (\ref{DVm}) with increasingly many tunnels running
between two caps in a pair of spheres.  We prove $M_j$ converge in the intrinsic flat sense to a torus, $M_\infty$.   We
prove that there exists $\delta_0 \in \CovSpec(M_\infty)$
such that $\delta_0$ is not the limit of any sequence
$\delta_j \in \CovSpec(M_j)\cup\{0\}$. 
\end{example}

\begin{figure}[h]
\begin{center}
\includegraphics[width=0.7\textwidth]{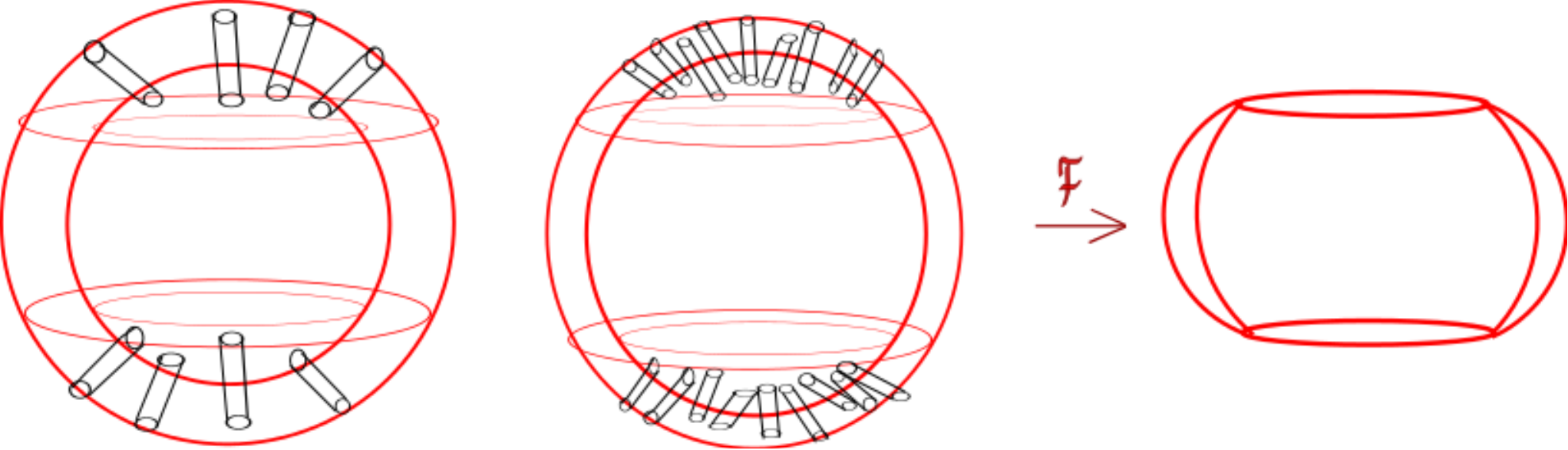}
\caption{Example~\ref{2D-hole-appears}: $M_j \Fto M_\infty$}
\label{fig-2D-hole-appears}
\end{center}
\end{figure}

\begin{proof}
In this example, we use the construction in Example~\ref{A.19}. 
We consider a sphere $\mathbb{S}^2$, with the two poles $P_N,~P_S\in \mathbb{S}^2$ and a two disjoint caps $C_N$,  $C_S$ around them of
radius $\pi/100$.  
We choose a collection of points on  $C_N\cup C_S$,
\begin{align*}
\{p^N_1,p^N_2,\ldots,p^N_{N_j}\}\subset C_N\\
\{p^S_1,p^S_2,\ldots,p^S_{N_j}\}\subset C_S
\end{align*}
such that $d(p_i^N,p_k^N)>3/j$ and $d(p_i^S,p_k^S)>3/j$ for $1\leq i,k\leq N_j$. Moreover we have 
\be
C_N\cup C_S\subset \bigcup_{i=1}^{N_j}\left( B(p_i^N,{10}/{j})\cup B(p_i^S,{10}/{j})\right).
\ee
 Now consider 
\be
W_j=(\mathbb{S}^2\backslash U_j) \times [0,h_j]
\ee
where 
\begin{align*}
U_j=\bigcup_{i=1}^{N_j}\left(B(p_i^N,r_j)\cup B(p_i^S,r_j)\right)~\text{and}~h_j<
\min\left\{\frac{1}{L(\partial U_j)}, \frac{1}{j}\right\}.
\end{align*}
 We choose $r_j$ such that $jr_j\to 0$ as $j\to\infty$.
 
Let $M_j=\partial W_j$ so that $M_j$ is two copies of $\mathbb{S}^2\backslash U_j$ with opposite orientation glued together by cylinders of the form 
$\partial B(p_i,r_j)\times[0,h_j]$, see Figure~\ref{fig-2D-hole-appears}.   Let
$C_N'$ and $C_S'$ be the caps with their cylinders
in $M_j$:
\begin{eqnarray}
C_N' &=& \partial W_j \cap \partial \left( (C_N \backslash U_j)\times [0,h_j] \right)\subset M_j,\\
C_S' &=& \partial W_j \cap \partial \left( (C_S \backslash U_j)\times [0,h_j] \right) \subset M_j.
\end{eqnarray}

We claim  
\be\label{cov-claim}
\CovSpec(M_j) \subset (0, 2h_j+{6}/{j}] \cup (3,\infty).
\ee
Take $\delta>2h_j+6/j$.   Let $C$ be a closed
curve in $M_j$.  If the image of $C$ is homotopic to a curve whose image lies in $C'_N$, then $C$ is generated by loops lying in $C'_N$.
All loops in $C'_N$ are generated either by combinations of 
loops of length
$2\pi r_j< 2h_j+6/j$ that go around single cylinders and  loops of
length $2h_j+6/j$ which go through one cylinder and then a second
cylinder in an adjacent ball.   Thus these loops lift closed to 
$\tilde{M}_j^\delta$.   The same is true if $C$ is homotopic to a curve whose image lies in $C'_S$.   The only closed curves which might lift open to $\tilde{M}_j^\delta$, are ones which 
travel from one cap to another cap and so have length 
$\ge 2(98\pi/100)>6$.   Thus we have
(\ref{cov-claim}).

We claim that
{\em $M_j$ converge in Gromov-Hausdorff topology to a metric space,} 
\be
Y=\mathbb{S}^2\sqcup_{C_N\cup C_S}\mathbb{S}^2,
\ee    
{\em where the spheres have the standard metric and they are glued
together along the caps.   So this is a $2$ dimensional metric
with singularities along $\partial C_N\cup \partial C_S$
that take the form of three half planes meeting along a line.}
\vspace{.1cm}

To prove our claim we construct $\epsilon_j$-almost isometry $\eta_j:M_j\to Y$. 
Every point $p\in M_j$ can be denoted by $(x,t)$ where $x\in \mathbb{S}^2$ and $t\in[0,h_j]$. We define
\begin{align}
\eta_j((x,t))=\left\{\begin{array}{lll}
(p,0)& if~x\in\partial B(p,r_j),~ p=p_i^N~or~p_i^S\\
(x,0)&if~ x\in (C_N\cup C_S)\backslash U_j\\
(x,t)& Otherwise
\end{array} \right.
\end{align}
We will show the map $\eta_j$ satisfies conditions (\ref{epsilon-isometry1}) and (\ref{epsilon-isometry2}) defining an almost isometry by proving 
\begin{eqnarray}
&\left|d(p,q)-d(\eta_j(p),\eta_j(q))\right|<\tfrac{4\pi(\pi-2)}{300}jr_j+2(h_j+\tfrac{20}{j})\label{one}\\
&Y\subset T_{2r_j}(\eta_j(M_j))\label{two}.
\end{eqnarray}
%\vspace{.1cm}

First we prove (\ref{one}). Let $\gamma$ be  a minimizing curve between $\eta_j(p)$ and $\eta_j(q)$ in $Y$.  We construct a curve  $\gamma_j$ between $p$ and $q$ in $M_j$ with  $\eta_j(\gamma_j)\subset\gamma$ and $L(\gamma_j)\geq L(\gamma)$ such that 
\be
\left|L(\gamma)-L(\gamma_j)\right|< \tfrac{4\pi(2\pi-2)}{300}jr_j+2(h_j+\tfrac{20}{j})
\ee
Obviously for such curve we have
\be
\left|d(p,q)-d(\eta_j(p),\eta_j(q))\right|\leq \left|L(\gamma)-L(\gamma_j)\right|.
\ee
For $p=(x,t)$ and $q=(y,t')$ we have the following three cases:
\vspace{.2cm}

Case I. $t=t'=0$ or $t=t'=h_j$.

Case II. $0<t,t'<h_j$ or $t\neq t'$ with $p=(x,0)\in M_j\backslash(C'_N\cup C'_S)$, $q\notin M_j\backslash(C'_N\cup C'_S)$.

Case III. $p=(x,0)\in M_j\backslash(C'_N\cup C'_S)$, $q=(x,h_j)\in M_j\backslash(C'_N\cup C'_S)$.

\vspace{.1cm}
For the Case I, assume $p=(x,0)$ and $q=(y,0)$. Let $\gamma_j(s)=(x_j(s),0)\subset \eta_j^{-1}(\gamma)$ be the shortest path between $p$ and $q$  in $\eta_j^{-1}(\gamma)$. 
From the definition of $\eta_j$ we have
\begin{eqnarray}
L\left(\gamma_j\left|\right._{(C_N\cup C_S)\backslash \partial U_j}\right)&=&L\left(\gamma\left|\right._{(C_N\cup C_S) \backslash U_j}\right)\\
L\left(\gamma_j\left|\right._{M_j\backslash (C'_N \cup C'_S)}\right)&=&L\left(\gamma\left|\right._{Y\backslash (C_N\cup C_S)}\right)\\
\left|L\left(\gamma_j\left|\right._{ \partial U_j}\right)-L\left(\gamma\left|\right._{U_j}\right)\right|&\leq& \tfrac{2\pi/100}{3/j}(2\pi r_j-2r_j)
\end{eqnarray}
since each time $\gamma_j$ goes around a cylinder that
$\gamma$ cuts across, it is
$(2\pi r_j - 2r_j)$ longer, and we know $\gamma$ crosses at
most $2\diam(C_N)/(3/j)= \tfrac{2\pi/100}{3/j}$ cylinders.
Thus we have 
\be
\left|L(\gamma)-L(\gamma_j)\right|< \tfrac{2\pi(2\pi-2)}{300}jr_j.
\ee
\vspace{.1cm}

For the Case II, put $p'=(x,0)$ and $q'=(x,0)$. We have
\begin{eqnarray}
d(p,q)&\leq& d(p',q')+d(p',p)+d(q',q)\\
&\leq &d(p',q')+2(h_j+\tfrac{20}{j}).
\end{eqnarray}
and
\be
d(\eta_j(p),\eta_j(q))=d(\eta_j(p'),\eta_j(q')).
\ee
So we have 
\be
\left|d(p,q)-d(\eta_j(p),\eta_j(q))\right|\leq \left| d(p',q')-d(\eta_j(p'),\eta_j(q'))   \right|+2(h_j+\tfrac{20}{j})
\ee
and by the Case I, 
\be
\left|d(p,q)-d(\eta_j(p),\eta_j(q))\right|\leq \tfrac{4\pi(\pi-2)}{300}jr_j+2(h_j+\tfrac{20}{j}).
\ee
%\vspace{.1cm}

For the Case III, again we assume $\gamma_j\subset \eta_j^{-1}(\gamma)$ to be the shortest path between $p$ and $q$  in $\eta_j^{-1}(\gamma)$. Then we have
\begin{eqnarray}
L\left(\gamma_j\left|\right._{M_j\backslash (C'_N \cup C'_S)}\right)&=&L\left(\gamma\left|\right._{Y\backslash (C_N\cup C_S)}\right)\\
L(\gamma_j)-L(\gamma)&=&L\left(\gamma_j\left|\right._{C'_N\cup C'_S}\right)\leq h_j+\tfrac{20}{j}.
\end{eqnarray}
Therefore we see for all three cases we have (\ref{one}).  For (\ref{two}) we have
\begin{eqnarray}
\eta_j(C_N') &=&\left(C_N\backslash U_j\right) \cup
\{p_1^N,....,p_{N_j}^N\}\subset  Y\\
\eta_j(C_S') &=&\left(C_S\backslash U_j\right) \cup 
\{p_1^S,..., p_{N_j}^S\} \subset  Y\\
\eta_j(M_j\backslash (C_N'\cup C_S'))&=&Y\backslash (C_N\cup C_S).
\end{eqnarray}
 Therefore $T_{2r_j}(\eta_j(M_j))=Y$. Thus we have our claim that $M_j$ converge to $Y$ in the Gromov-Hausdorff sense.
\vspace{.1cm}

We claim that: {\em the intrinsic flat limit $M_\infty$ is the torus:}
\be\label{Mtorus}
M_\infty=(\mathbb{S}^2\backslash {C_N\cup C_S}) \sqcup_{\partial(C_N\cup C_S)}(\mathbb{S}^2\backslash {C_N\cup C_S})= Y \setminus (C_N\cup C_S).
\ee
For any point $p\in Y$ there exist $p_j$ in $M_j$ such that  $p_j\to p$. Suppose $ p\in Y \setminus (C_N\cup C_S)$ and $p$ is not in $\partial({C_N}\cup{C_S})$. There exists $r>0$ small enough, such that $B(p_j,r)$ with restricted metric converges smoothly to  $B(p,r)$. So it converges in intrinsic flat sense, i.e. $S(p_j,r)\Fto S(p,r)$. Therefore $p_j$ is not a disappearing sequence and $p\in M_\infty$. 

For any point  $p$ in the interior of $C_N \cup C_S$, and a sequence $p_j$ converging to $p$,  there exist $r>0$ small enough such that  $B(p_j,r)$
is isometric to a ball in Example~\ref{A.19}.
Since all points in that example disappear, we have
$d_{\mathcal{F}}(S(p_j,r),{\bf{0}})\to 0$.  So $p_j$ is a disappearing sequence, and $p$ is not in $M_\infty$. 
   
If $p$ is on the boundary of $C_N\cup C_S$, then it has a positive 
density:
\be
\liminf_{r\to 0} \frac{\mathcal{H}^2(B(p,r)\cap (Y \setminus (C_N\cup C_S) )}{r^2}>0.
\ee
So $p\in \set(T_\infty)=M_\infty$. Thus we have our claim
in (\ref{Mtorus}).

Let $\delta_0=L(\partial C_N)/2=\pi\sin(\pi/100)<2$.
Since the shortest noncontractible closed geodesic in $M_\infty$
has length $L(\partial C_N)=2\pi \sin(\pi/100)$, we
see that $\delta_0\in \CovSpec(M_\infty)$.
But there are not $\delta_j \in \CovSpec(M_j)$ such that
$\delta_j\to \delta_0$ because 
$\delta_j\in(0,2 h_j+6/j] \cup [3,\infty)$
by (\ref{cov-claim}) and such $\delta_j \to 0$ or $\delta_j \to \delta_\infty\ge 3$.
\end{proof}

\subsection{Converging Sequence with No Converging Subsequence of its $\delta$-Cover}

Recall in \cite{SorWei3}, the second author and Wei proved that
if a sequence $M_j \GHto M_\infty$ then a subsequence of the
$\delta$ covers converges in the pointed Gromov-Hausdorff sense (cf. Theorem~\ref{convergence-delta}).   Here we see this
cannot be extended to the intrinsic flat setting.   Recall the definition
of pointed intrinsic flat convergence of locally integral current spaces given in Definition~\ref{defn-pointed-IF-conv}.

\begin{example}\label{Many more spheres}
We construct a sequence of oriented manifolds, $M_j$, which are spheres with increasingly many
increasingly thin handles, such that $M_j \Fto M_\infty=\mathbb{S}^2$ but 
{$\tilde{M}^\delta$ with 
$\delta=\pi/2$} doesn't have any converging  subsequence in the pointed
intrinsic flat sense. The covering spectra of $M_j$ includes 
$\delta_j\in \CovSpec(M_j)$  with $\delta_j\to \delta_0>0$ where $\delta_0$ is not in  $\CovSpec(M_\infty)\cup\{0\}$.
\end{example}
\begin{figure}[h]
\begin{center}
\includegraphics[width=0.7\textwidth]{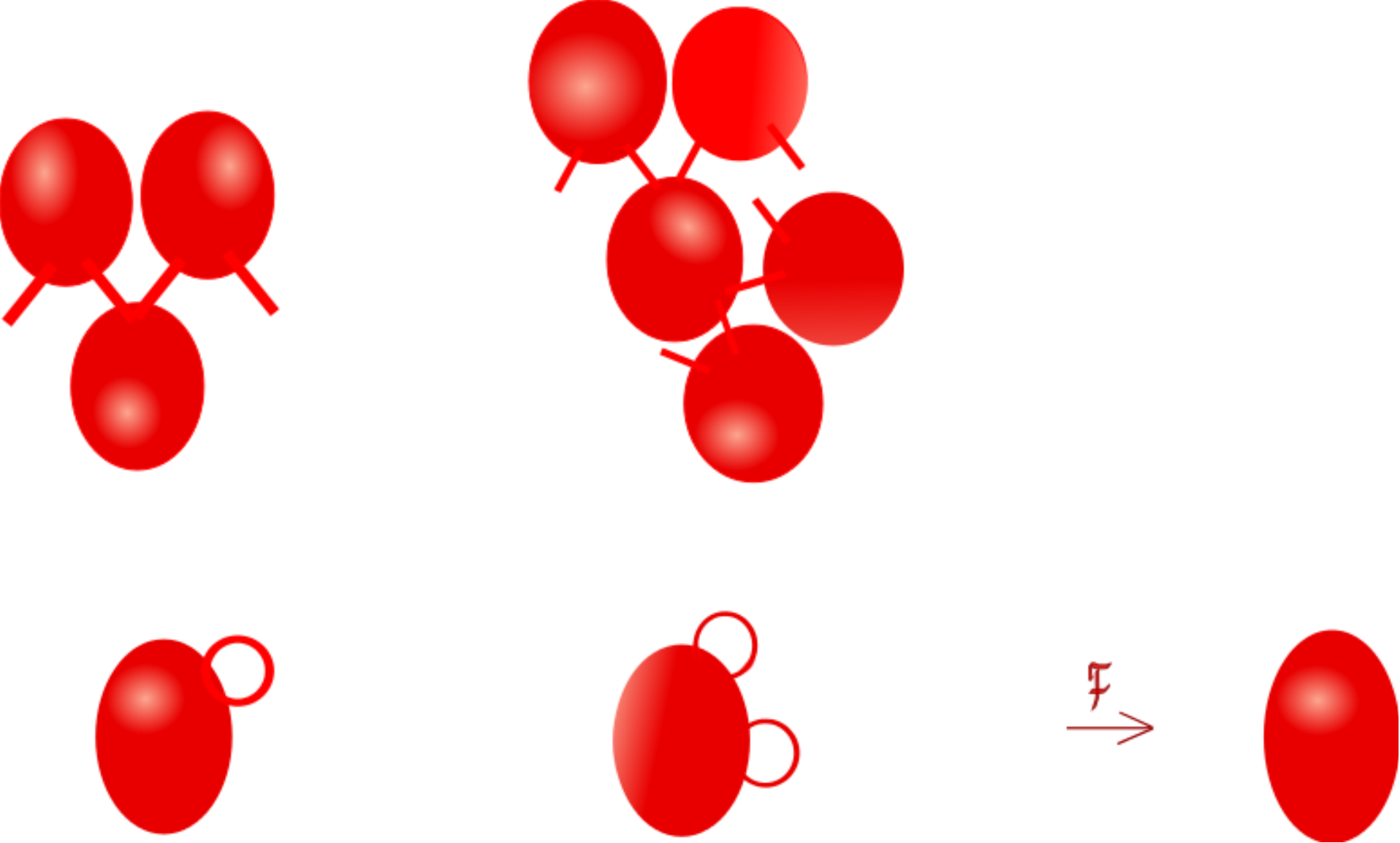}
\caption{Example~\ref{Many more spheres}}
\label{fig-Many more spheres}
\end{center}
\end{figure}

\begin{proof}
For the construction of $M_j$ we use gluing  (cf. Examples~\ref{metric1} and \ref{metric2}). 
Let $B(p_i,\epsilon_i)\subset  \mathbb{S}^2$ be disjoint balls with $\epsilon_i\rightarrow 0$. Let $\epsilon_j^\prime=\min_{i=1}^{j} \{\tfrac{\epsilon_{2i-1}}{10^i},\tfrac{\epsilon_{2i}}{10^i}\}$.   Let 
\begin{eqnarray*}
M_j=W_j\sqcup_{A_1}\left([0,\pi]\times\partial B(q_1,\epsilon_j^\prime)\right)\sqcup_{A_2}\ldots\sqcup_{A_j}\left([0,\pi]\times\partial B(q_j,\epsilon_j^\prime)\right) \\
\end{eqnarray*}
where 
\be
W_j=\left(\mathbb{S}^2\backslash \bigcup_{i=1}^{2j}B(p_i,\epsilon_j^\prime)\right)
\ee
and where
\be
A_i=\partial  B(p_{2i},\epsilon_j^\prime)\cup \partial  B(p_{2i-1},\epsilon_j^\prime)  \textrm{{ for $i=1\ldots j$.  }}
\ee
 which attaches to $\{0\}\times \partial B(q_i,\epsilon_j')$ and $\{\pi\}\times \partial B(q_i,\epsilon_j')$ respectively.  On  $W_j$  we consider  $g_{\mathbb S^2}\left|\right. _{W_j}$. The metric over $[0,\pi]\times\partial B(q_i,\epsilon_j^\prime)$  is defined $dr^2+f_j^2(r)d\theta^2$ where 
\begin{align}
f_j(r)=\left\{\begin{array}{lll}
-\sin(r-\arcsin \epsilon_j)& r\in[0,\eta_j]\\
\textrm{smoothly monotone}&r\in(\eta_j,2\eta_j)\\
\sin(2\eta_j)&r\in[2\eta_j,\pi-2\eta_j]\\
\textrm{smoothly monotone}&r\in(\pi-2\eta_j,\pi-\eta_j)\\
-\sin(r-\arcsin(\epsilon_j))&r\in[\pi-\eta_j,\pi]
\end{array} \right.
\end{align}
where $\eta_j<\arcsin(\epsilon_j)/2$.  This choice of $f_j$ makes a $M_j$ a smooth Riemannian manifold.    

We claim $M_j\Fto \mathbb{S}^2$.  We apply a theorem of  Lakzian  and the second author proven in \cite{Lakzian-Sormani}
(cf. Theorem~ \ref{diffeomorphic})
to prove this claim. Consider  
\be
W_j=\mathbb{S}^2\backslash \bigcup_{i=1}^{2j}B(p_i,\epsilon_j^\prime)\subset (M_j,g_j)
\ee as above. We define  
\be
\bar{W}_j= \mathbb{S}^2\backslash \bigcup_{i=1}^{2j}B(p_i,\epsilon_j^\prime)\subset (\mathbb{S}^2, g_{\mathbb{S}^2}).
\ee
 We consider the induced Riemannian metric from $\mathbb{S}^2$ and $M_j$ on $\bar{W}_j$ and $W_j$: $\left(\bar{W}_j,g_{\mathbb{S}^2}\left|\right._{\bar{W}_j}\right)$  and $\left({W}_j,g_j\left|\right._{{W}_j}\right)$. Applying (\ref{InequalitySL}) we have
 \begin{eqnarray*}\label{flatdistance}
\nonumber d_{\mathcal{F}}(\mathbb{S}^2,M_j)\leq& (2\bar{h}+a)\left(\area(W_j)+\area(\bar{W}_j)+L(\partial W_j)
+L(\partial \bar{W}_j)\right)\\
&+\area(\mathbb{S}^2\backslash \bar{W}_j)+\area(M_j\backslash W_j).
\end{eqnarray*}
where $a$ and $\bar{h}$ are defined in (\ref{a}) and (\ref{barbarh}). To prove our claim we need only to show the right hand side of the equation above converges to zero.
 
The maps $\psi_1=Id: \bar{W}_j\to \bar{W}_j$ and  $\psi_2:\bar{W}_j\to W_j$ which maps every point in $\bar{W}_j$ to the equivalent point in $W_j$, are diffeomorphism. Moreover
\be
 {\psi_1}^* g_{\mathbb{S}^2}(V,V)=\psi_2^*g_j (V,V)\quad\forall V\in T\bar{W}_j.
 \ee
  Therefore  $\epsilon$  as in (\ref{Three}) and (\ref{Four}) can be chosen very small. We fix  $\epsilon=\frac{1}{j^2}$. 
We have 
\begin{eqnarray}
 D_{\bar{W}_j}&\leq& \diam(S^2)\leq \pi\\
 D_{W_j}&\leq& \diam(M_j)\leq 2\pi 
 \end{eqnarray}
  and  therefore 
 \be
a<\tfrac{\arccos(1+\epsilon)^{-1}}{\pi}\max\{D_{\bar{W}_j},D_{W_j}\}
=2\arccos(\tfrac{j^2}{j^2+1}).
\ee
 We fix 
\begin{eqnarray}\label{arccos}
a=2\arccos(\tfrac{j}{j+1}).
\end{eqnarray}
For $\lambda$, $h$ as defined in (\ref{lambda}) and  (\ref{h})  we have
\begin{eqnarray}
\lambda&=&\sup_{x,y\in \bar{W}_j}\left|d_{\mathbb{S}^2}(\psi_1(x),\psi_1(y))-d_{M_j}(\psi_2(x),\psi_2(y))\right|\\
&\leq& \sup_{x,y\in W_j}  [\min\{\tfrac{d(x,y)}{\epsilon^\prime_j}, 2j \}(\pi-2)\epsilon^\prime_j]\\
&\leq& {2 (\pi-2)}{j\epsilon^\prime_j}\nonumber
\end{eqnarray}
 and 
\begin{eqnarray}
h&=&\sqrt{\lambda(\max\{D_{\bar{W}_j},D_{W_j}\}+\tfrac{\lambda}{4})}\\
&\leq&\sqrt{{4\pi^2 (\pi-2)}\cdot{j\epsilon^\prime_j} +\tfrac{ \pi-2}{2}\cdot{j\epsilon^\prime_j}}.\nonumber
\end{eqnarray}

  Therefore 
 \be\label{barh}
\bar{h}=\max \{h,\sqrt{\epsilon^2+2\epsilon}D_{W_j},\sqrt{\epsilon^2+2\epsilon}D_{\bar{W}_j}\}\leq\max\{ \tfrac{4\pi}{j},h\}.
\ee
Moreover we have
\begin{eqnarray}\label{equations}
\area(\mathbb{S}^2\backslash \bar{W}_j)&=&2j\pi{\epsilon^\prime_j}^2\\
\nonumber\area(M_j\backslash W_j)&\leq&2\pi^2\epsilon^\prime_j\\
\nonumber\area(W_j)=\area(\bar{W}_j)&\leq& \pi\\
\nonumber L(\partial W_j)=L(\partial\bar{W}_j)&\leq& 2j\pi\epsilon^\prime_j.
\end{eqnarray}
Because  of the choice of $\epsilon^{\prime}_j$ as  in the beginning of the proof of this example, we have $j\epsilon^{\prime}_j\to 0$ as $j$ goes to $\infty$. 
Considering the bounds  (\ref{arccos}), (\ref{barh}) and  (\ref{equations}) and inequality (\ref{flatdistance}), we conclude 
\be
 d_{\mathcal{F}}(M_j,\mathbb{S}^2)\Fto 0.
 \ee

We claim  that {\em there exist $\delta_j$ in $\CovSpec(M_j)$  such that $\delta_j\to\delta_0>0$}.
All loops in $M_j$ are generated either by combinations of   loops which go around the cylinders of length $2\pi\epsilon_j^\prime$ with  loops which goes along the cylinders and $W_j$ of length 
\be
2\delta_j=\pi+d(p_{2i},p_{2i-1})-2\epsilon_j.
\ee
 For $\delta\geq\delta_j$, these loops lift closed to $M_j^{\delta}$ and so $M_j^{\delta}= M_j$.
For $\pi/2\leq \delta<\delta_j$ the loops along the cylinder lift open and loops around cylinder lift closed. Therefore  $M_j^{\delta}
=M_j^{\pi/2}$ as depicted in Figure~\ref{fig-Many more spheres}.  Thus $\delta_j\in \CovSpec(M_j)$.   However
\be
\delta_j \to \delta_0=\pi+d(p_{2i},p_{2i-1}) \notin \CovSpec(M_\infty)\cup \{0\}=\{0\}.
 \ee

Finally we claim that {\em the sequence $\tilde{M}_j^{\delta}$ for $\delta=\pi/2$
depicted in Figure~\ref{fig-Many more spheres}
has no subsequence converging in the pointed intrinsic flat sense}. 
We use a similar argument as in Example 9.1 in \cite{Sormani-AA}
of the second author.   

First observe that $\tilde{M}_j^\delta$ has infinite volume and so
it must be viewed as a locally integral current space in the sense of
Definition~\ref{defn-locally}.   Recall also the definition of pointed intrinsic flat convergence in Definition~\ref{defn-pointed-IF-conv}.

Suppose on the contrary that a subsequence, again denoted by $\tilde{M}_{j}^{\delta}$, converges in the
pointed intrinsic flat sense to some some locally integral current space we call $M_\infty^\delta$ possibly the $\bf{0}$ space.   
This means, there exists $\tilde{p}_j\in \tilde{M}_j^\delta$ 
and $\tilde{p}_\infty \in M_\infty^\delta$ such that 
\be
\forall R>0 \qquad
S(\tilde{p}_j,R) 
\Fto S(\tilde{p}_\infty,R).
\ee
We will in fact prove that $S(\tilde{p}_j, 3\pi)$ depicted
in Figure~\ref{fig-Many more spheres} has no intrinsic flat
limit because it contains increasingly many copies of
isometric balls.

Choose 
\be
\tilde{y}_j\in S(\tilde{p}_j, \pi)\subset \tilde{M}_{j}^{\delta}
\ee 
to be the unique lifts of points $y_j$ in the interior of $W_j$ such that  for $r$ small enough  $B(y_j,r)$ remains in the interior of $W_j $.    In this way $B(\tilde{y}_j, r)\subset
S(\tilde{p_j}, 3\pi)$ is isometric to $B(y_j, r)$ which is isometric
to a ball in a standard sphere.    Observe that such balls cannot 
disappear in the limit because they are all isometric to one another.
Thus, in particular $M^\delta_\infty \neq \bf{0}$.

We claim that {\em there are 
\be
x_{j,1},x_{j,2},\ldots,x_{j,2j}\in S(\tilde{p}_j, 3\pi)\subset \tilde{M}_{j}^{\delta}
\ee
distinct liftings of the point $y_j$ in $M_j$ distinct from $\tilde{y}_j$.}   Note that each pair $x_{j,2i-1}, x_{j, 2i}$ is found by lifting
a closed loop through the $i^{th}$ cylinder based at $y_j$ to
a path from $\tilde{y}_j$ to $x_{j, 2i-1}$ and by lifting same
the closed loop traversed in the opposite direction 
to a path from $\tilde{y}_j$ to $x_{j, 2i}$.   Since $M_j$ contains
$j$ cylinders, we obtain $2j$ such points as claimed.

Next observe that
\begin{eqnarray}
d_{\tilde{M}_j^\delta}(x_{j,i},x_{j,k}) > 2r \qquad \forall i,k \in \{1,2,\ldots,2j\}.
\end{eqnarray}
Thus
$
B(x_{j,k},r)
$
are disjoint and are all isometric to a ball $B(x,r)$ in a standard sphere.
Thus
\begin{eqnarray}
d_{\mathcal{F}}( S(x_{j,k},r), S(x, r))=0 \qquad \forall k \in \{1,2,\ldots,2j\}.
\end{eqnarray}
and
\begin{eqnarray}
d_{\mathcal{F}}( S(x_{j,k},r), {\bf{0}})=
h_0=d_{\mathcal{F}}( S(x, r), {\bf{0}})>0 ~~~ \forall k \in \{1,2,\ldots,2j\}.
\end{eqnarray}
Applying the Bolzano-Weierstrass Theorem of the second author proven in \cite{Sormani-AA} (cf. Theorem~\ref{B-W-BASIC}), there is a subsequence of each
$x_{j,k}$ which converge to some $x_k \in S(\tilde{p}_\infty, 3\pi) \subset M_\infty^\delta$.    Diagonalizing, there
is a subsequence again denoted by $x_{j,k}$ such that $x_{j,k}\to x_k$ for all $k$ such that 
\begin{eqnarray}
d_{ M_\infty^\delta}(x_k, x_{k'})> 2r
\end{eqnarray}
so that
$
B(x_{j,k},r)
$
are disjoint.  Applying (\ref{balls-converge}),
\begin{eqnarray}
\lim_{j\to\infty}d_{\mathcal{F}}( S(x_{j,k},r), S(x_k, r))=0 
\end{eqnarray}
and so
\begin{eqnarray}
d_{\mathcal{F}}( S(x_{k},r), S(x, r))=0. 
\end{eqnarray}
Thus $S(\tilde{p}_\infty, 3\pi)$ contains infinitely many balls of the same mass, which contradicts the
fact that
$\mass(S(\tilde{p}_\infty, 3\pi))$ is finite.  

So $\tilde{M}_j^\delta$
has no converging subsequence.

\end{proof}

\section{Open Questions}

\begin{quest}
Can one define a reasonable notion for a product of
integral current spaces and prove that if the spaces converge,
$M_j \Fto M_\infty$
and $N_j \Fto N_\infty$, then $M_j\times N_j\Fto M_\infty\times N_\infty$?   Naturally the notion of product must extend the notion of
isometric product already defined for Riemannian manifolds.   Keep
in mind that Ambrosio-Kirchheim do not define the notion of a 
product of integral currents except for an integral current times an
interval in \cite{AK}.
\end{quest}

\begin{quest}
Can one impose a condition on a geodesic to guarantee that
it does not disappear under intrinsic flat convergence?   Perhaps
one might require that there exists $r>0$ such that the
tubular neighborhood of radius r about a geodesic is known to
converge in the intrinsic flat sense to a nonzero integral current
space.   Recall that in \cite{Sormani-AA} the second author proved that points will not
disappear if their balls are know to have nonzero intrinsic flat limits.
\end{quest}

\begin{quest}
Can one define new spectra which capture part of the covering
spectrum but behave better under intrinsic flat convergence?  One possible approach to avoid the disappearance of elements in the covering spectrum would be to consider the work of Plaut and Wilkins \cite{Plaut-Wilkins}, in which elements of the covering spectra 
are found using $\epsilon$-homotopies.   If one requires these $\epsilon$ homotopies to be built from points with uniform conditions that guarantee the points don't disappear under intrinsic flat convergence, then one may be able to define a new spectra
which converge well under intrinsic flat convergence.   
\end{quest}

\bibliographystyle{alpha}
\bibliography{2013}

\end{document}